\documentclass[10pt]{amsart}

\addtolength{\oddsidemargin}{-1.5cm}
\addtolength{\evensidemargin}{-1.5cm}
\addtolength{\textwidth}{3cm}
\addtolength{\topmargin}{-1.5cm}
\addtolength{\textheight}{2cm}

\usepackage{amsmath}
\usepackage{amsfonts}
\usepackage{amssymb}
\usepackage{graphicx}
\usepackage{color}

\newcommand{\Bignorm}[1]{\Bigl\|#1\Bigr\|}

\newcommand{\NN}{{\mathbb N}}
\newcommand{\bC}{{\mathbb C}}
\newcommand{\Rst}{{\mathbb R}}

\newcommand{\Rdst}{{\Rst^d}}

\newcommand{\Rtdst}{{\Rst^{2d}}}
\newcommand{\set}[2]{\big\{ \, #1 \, \big| \, #2 \, \big\}}
\newcommand{\norm}[1]{\lVert#1\rVert}
\newcommand{\Esp}{E}

\newcommand{\Zst}{{\mathbb Z}}
\newcommand{\Zdst}{{\Zst^d}}
\newcommand{\supess}{\mathop{\operatorname{esssup}}}

\newcommand{\Lsp}{L}
\newcommand{\Ltsp}{{\Lsp^2}}
\newcommand{\Linfsp}{{\Lsp^\infty}}
\newcommand{\LtRd}{{\Ltsp(\Rst^d)}}
\newcommand{\LtRtd}{{\Ltsp(\Rst^{2d})}}
\newcommand{\Nst}{{\mathbb N}}
\newcommand{\Gc}{{\mathcal{G}}}

\def\Esp{E}

\def\into{\hookrightarrow}

\newcommand{\Espd}{{E_d}}

\newcommand{\EspdLt}{{E_{d,\Ltsp}}}

\newcommand{\ip}[2]{\ensuremath{\left<#1,#2\right>}}
\newcommand{\sett}[1]{\ensuremath{\left \{ #1 \right \}}}
\newcommand{\abs}[1]{\ensuremath{\left| #1 \right| }}
\newcommand{\mes}[1]{\ensuremath{\left| #1 \right| }}

\newcommand{\iv}[1]{{#1}^\vee}
\newcommand{\rel}{\rho}

\newcommand{\rec}{S}

\newcommand{\recv}{\rec_U}
\newcommand{\win}{{W(L^\infty,L^1_w)}}
\newcommand{\winr}{{W_R(L^\infty,L^1_w)}}

\newcommand{\wrtd}{{W(L^\infty,L^1_w)(\Rtdst)}}

\newcommand{\wweak}{{W_R^{\rm weak}(L^\infty,L^1_w)}}
\newcommand{\wright}{{W_R(L^\infty,L^1_w)}}

\newcommand{\SEsp}{S_\Esp}
\newcommand{\SLt}{S_{L^2}}

\newcommand{\Spv}{S^p_v}

\newcommand{\Sjclass}{M^{\infty,1}_{\widetilde{w}}(\Rtdst)}
\newcommand{\consEw}{C_{\Esp,w}}

\newcommand{\Vstft}{V}

\newcommand{\multm}{M_m}
\newcommand{\multmg}{M_{\eta_\gamma}}
\newcommand{\multmgeps}{M^\varepsilon_{\eta_\gamma}}

\newcommand{\kernelenv}{K}
\newcommand{\env}{\Theta}
\newcommand{\molg}{{T_\gamma}}
\newcommand{\admolg}{{T^*_\gamma}}
\newcommand{\molgp}{{T_{\gamma'}}}
\newcommand{\phigk}{\phi^{\gamma}_k}
\newcommand{\lambdagk}{\lambda^{\gamma}_k}

\newcommand{\ellpv}{{\ell^p_v}}

\newcommand{\Lisp}{L^1}

\newcommand{\stft}{V_\varphi}

\newcommand{\tfmolg}{{H_\gamma}}

\newcommand{\locmg}{{H_{\eta_\gamma}}}

\newcommand{\locm}{{H_m}}
\newcommand{\locmgeps}{{H^\varepsilon_{\eta_\gamma}}}

\newcommand{\Mpv}{{M^p_v}}
\newcommand{\Miw}{{M^1_w}}
\newcommand{\Miwp}{M^\infty_{1/w}}

\newcommand{\coeft}{C_T}
\newcommand{\trace}{\operatorname{trace}}

\newtheorem{theorem}{Theorem}[section]
\newtheorem{lemma}[theorem]{Lemma}
\newtheorem{coro}[theorem]{Corollary}
\newtheorem{prop}[theorem]{Proposition}

\newtheorem{rem}[theorem]{Remark}
\newtheorem{definition}[theorem]{Definition}
\newtheorem{example}[theorem]{Example}
\theoremstyle{remark}

\title{Frames adapted to a phase-space cover}

\author{Monika D\"orfler} \email{monika.doerfler@univie.ac.at} 
\author{Jos\'e Luis Romero} \email{jose.luis.romero@univie.ac.at}
\thanks{Monika D\"orfler was supported by the Austrian Science Fund (FWF):[T384-N13] {\em Locatif} and by the WWTF project
{\em Audiominer} (MA09-24).}
\thanks{Jos\'e Luis Romero gratefully acknowledges support from the Austrian Science Fund (FWF): [M1586], [P22746-N13]
and [T384-N13].}
\address{Faculty of Mathematics, University of Vienna, Oskar-Morgenstern-Platz 1,A-1090 Wien, Austria}
\date{\today}

\subjclass[2010]{42C15, 42C40, 41A30, 41A58, 40H05, 47L15}
\keywords{Phase-space, localization operator, frame, short-time Fourier transform, time-frequency analysis, time-scale
analysis}

\begin{document}

\begin{abstract}
We construct frames adapted to a given cover of the time-frequency or time-scale plane. The main feature is that we allow
for quite general and possibly irregular covers. The frame members are obtained by maximizing their concentration in the
respective regions of phase-space. We present applications in time-frequency, wavelet and Gabor analysis.
\end{abstract}
\maketitle

\section{Introduction}
\label{sec_intro}
A time-frequency representation of a distribution $f \in \mathcal{S}'(\Rdst)$ is a function defined on $\Rdst \times \Rdst$
whose value at $z=(x,\xi)$ represents the influence of the frequency $\xi$ near $x$. The short-time Fourier transform (STFT)
is a standard choice for such a representation, popular in analysis and signal processing. It is defined, by means of
an adequate smooth and fast-decaying window function $\varphi \in \mathcal{S}(\Rdst)$, as 
\begin{align}
\label{eq_stft}
\Vstft_{\varphi}f(z)
=\int_{\mathbb{R}^d}f(t)\overline{\varphi(t-x)}e^{-2\pi i\xi t}dt,
\quad z=(x,\xi) \in \Rdst \times \Rdst.
\end{align}
If the window $\varphi$ is normalized by $\norm{\varphi}_2=1$, the distribution $f$ can be re-synthesized from its
time-frequency content by
\begin{align}
\label{eq_stft_inversion}
f(t) =\Vstft_{\varphi}^\ast\Vstft_{\varphi}f(t) = \int_{\Rdst \times \Rdst} \Vstft_{\varphi}f(x,\xi) \varphi(t-x)e^{2\pi i\xi t} dx d\xi,
\qquad t \in \Rdst.
\end{align}
This representation is extremely redundant. One of the aims of time-frequency analysis is to provide a representation of
an
arbitrary signal as a linear combination of elementary time-frequency atoms, which form a less redundant dictionary. The
standard choice is to let these atoms be time-frequency shifts of a single window function $\varphi$, thus providing a
uniform partition of the time-frequency plane. The resulting systems of atoms are known as {\em Gabor frames}. However,
in certain applications atomic decompositions adapted to a less regular pattern may be required (see for example
\cite{gr93-2,alcamo04-1,dadepe10,ro11}).

For example, a time-frequency partition may be derived from perceptual considerations. For audio signals, this means
that low frequency bins are given a finer resolution than bins in high regions, where better time-resolution is often
desirable, cp.~\cite{klsc10,dohove11}. Such a partition is schematically depicted in the left plot of 
Figure~\ref{Fig1.jpg}.

More irregular partitions may be desirable whenever the frequency characteristics of an analyzed signal change over time
and
require adaptation in both time \emph{and} frequency. For example, adaptive partitions obtained from information
theoretic criteria were suggested in~\cite{jato07,lirororo11}. In such a situation, partitions as irregular as shown in
the
right plot of  Figure~\ref{Fig1.jpg} can be appropriate.

In this article we consider the following problem. Given a - possibly irregular - cover of the time-frequency plane
$\Rtdst$,
we wish to construct a frame for $L^2(\Rdst)$ with atoms whose time-frequency concentration follows the shape of the
cover
members. This allows to vary the trade-off between time and frequency resolution along the time-frequency plane. The 
adapted frames are constructed by selecting, for each member of a given cover, a family of functions maximizing their
concentration in the corresponding region of the time-frequency domain, or phase-space. These functions can be obtained
as
eigenfunctions of time-frequency localization operators, as we now describe.

Given a bounded measurable set $\Omega \subseteq \Rtdst$ in the time-frequency plane, the \emph{time-frequency
localization operator}
$H_\Omega$ is defined by masking the coefficients in \eqref{eq_stft_inversion}, cf.~\cite{da88, da90},
\begin{align}
\label{eq_loc_op}
H_\Omega f(t) = \stft^*(1_\Omega\stft f)(t)=
\int_{\Omega} \Vstft_{\varphi}f(x,\xi) \varphi(t-x) e^{2\pi i\xi t} dx d\xi.
\end{align}
$H_\Omega$ is self-adjoint and trace-class, so we can consider its spectral decomposition 
\begin{align}
\label{eq_locop_spec}
H_{\Omega} f = \sum_{k=1}^\infty \lambda^\Omega_k \langle f , \phi_k^\Omega\rangle \phi_k^\Omega,
\end{align}
where the eigenvalues are indexed in descending order.
Note that $\ip{H_{\Omega} f}{f}=\ip{1_\Omega\stft f}{\stft f}=
\int_\Omega \abs{\stft f}^2$. Hence, the first eigenfunction of $H_{\Omega}$
is optimally concentrated inside $\Omega$ in the following sense
\begin{align*}
\int_\Omega \abs{\stft \phi^\Omega_1 (z)}^2\,dz =
\max_{\norm{f}_2=1} \int_\Omega \abs{\stft f(z)}^2\,dz.
\end{align*}

More generally, it follows from the Courant minimax principle, see e.g.~\cite[Section~95]{rina55}, that the first $N$ eigenfunctions of $H_{\Omega}$ form an orthonormal set in
$L^2(\mathbb{R}^d )$ that maximizes the quantity
$\sum_{j = 1}^N  \int_{\Omega} |\stft \phi^\Omega_j (z) |^2\,dz$ among all orthonormal sets of $N$ functions in
$L^2(\mathbb{R}^d )$. 
In this sense, their time-frequency
profile is optimally adapted to $\Omega$. Figure~\ref{Fig:EFboxes} illustrates this principle by showing some
time-frequency
boxes $\Omega$ along with the STFT and real part of the corresponding localization operator's first eigenfunctions.

\begin{figure}[tb]
\centerline{\includegraphics[width=0.6\textwidth]{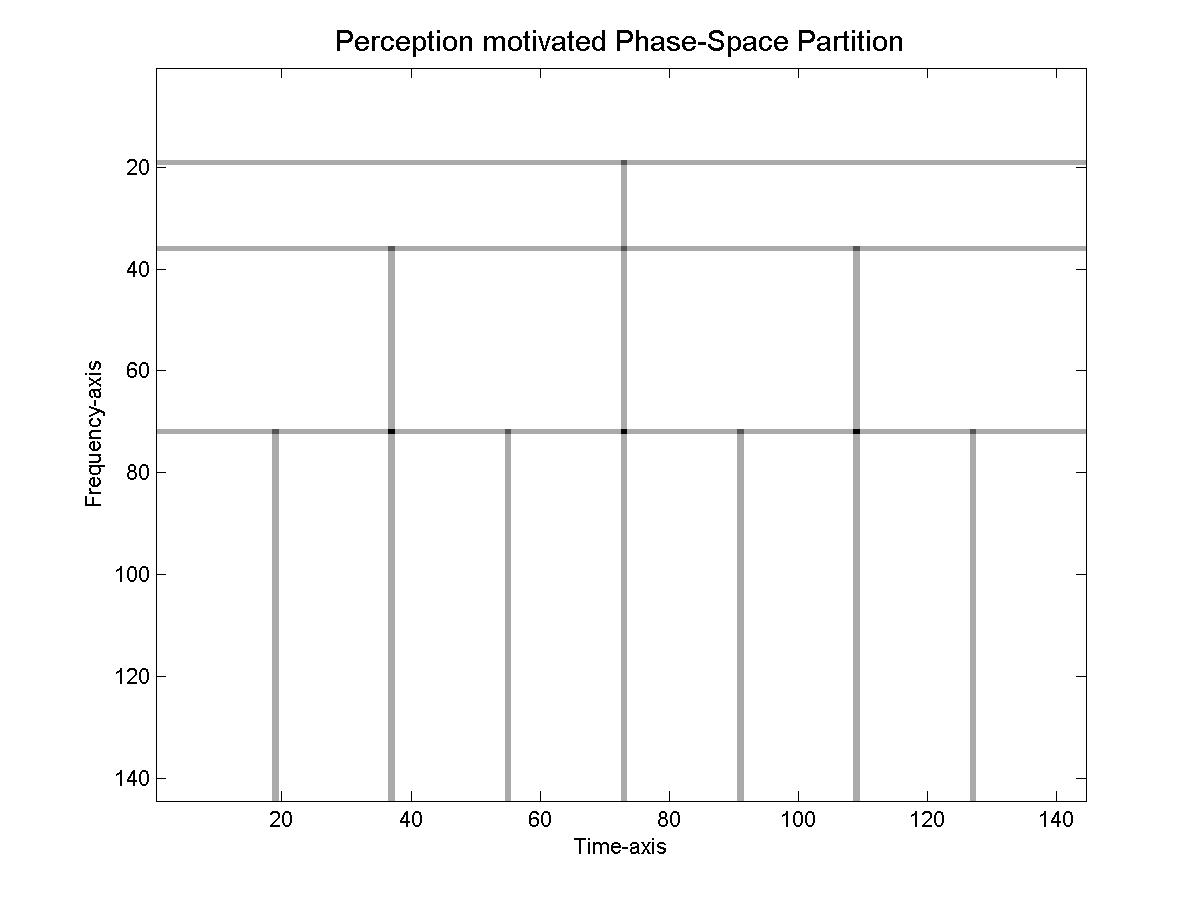}
 \includegraphics[width=0.6\textwidth]{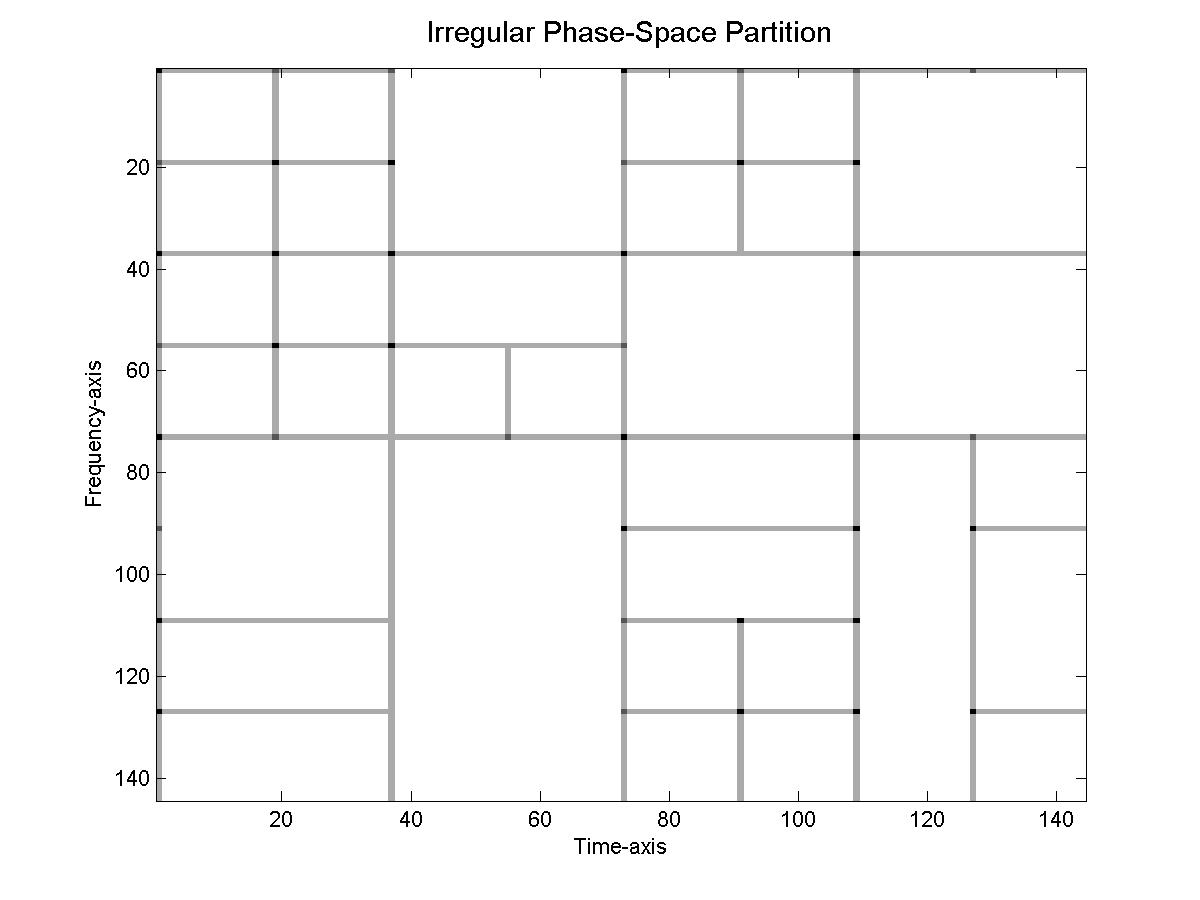}}
	\caption{Partitions in Time-Frequency}
	\label{Fig1.jpg}
\end{figure}
Based on these observations, we propose the following construction of frames. 
Let $\sett{\Omega_\gamma: \gamma \in \Gamma}$ be a cover of $\Rtdst$ consisting of bounded measurable sets.
In order to construct a frame adapted to the cover, we select, for each region $\Omega_\gamma$, the first $N_\gamma$
eigenfunctions
$\phi^1_{\Omega_\gamma}, \ldots, \phi^{N_\gamma}_{\Omega_\gamma}$ of the operator
$H_{\Omega_\gamma}$. We will prove that there is a number $\alpha>0$ such that
if $N_\gamma \geq \alpha \abs{\Omega_\gamma}$, then the collection of all the chosen eigenfunctions
spans $L^2(\Rdst)$ in a stable fashion. (Here $\abs{\Omega_\gamma}$ is the Lebesgue measure of $\Omega_\gamma$.)
Note that the condition $N_\gamma \geq \alpha \abs{\Omega_\gamma}$ is in accordance with the uncertainty principle,
which roughly says that for each time-frequency region $\Omega_\gamma$ there are only approximately
$\abs{\Omega_\gamma}$ degrees of freedom.

We allow for covers that are arbitrary in shape as long as they satisfy 
the following mild admissibility condition. An indexed set $\sett{\Omega_\gamma: \gamma \in \Gamma}$, is said to
be an \emph{admissible cover} of $\Rtdst$ if the following conditions hold.
\begin{itemize}
\item $\Gamma \subseteq \Rtdst$ and $\sup_{z \in \Rtdst} \#(\Gamma \cap B_1(z)) < +\infty$.
\item For each $\gamma \in \Gamma$, $\Omega_\gamma$ is a measurable subset of $\Rtdst$.
\item $\bigcup_{\gamma \in \Gamma} \Omega_\gamma = \Rtdst$.
\item There exists $R>0$ such that
\begin{align}
\label{eq_admissibility_no_inner}
\Omega_\gamma \subseteq B_R(\gamma), \qquad \gamma \in \Gamma.
\end{align}
\end{itemize}
Under this condition we prove the following.
\begin{theorem}
\label{th_sample}
Let $\sett{\Omega_\gamma: \gamma \in \Gamma}$ be an admissible cover of $\Rtdst$.
Then there exists a constant $\alpha>0$ such that for every choice of 
numbers $\{N_\gamma: \gamma \in \Gamma \} \subseteq \Nst$ satisfying
\begin{align*}
\alpha \abs{\Omega_\gamma} \leq N_\gamma \mbox{ and }
\sup_{\gamma \in \Gamma} N_\gamma < +\infty,
\end{align*}
the family of functions $\sett{\lambda^{\Omega_\gamma}_k \phi^{\Omega_\gamma}_k: \gamma \in \Gamma, 1 \leq k \leq
N_\gamma}$, obtained from the eigenfunctions and eigenvalues of the 
operators $H_{\Omega_\gamma}$ - cf. \eqref{eq_loc_op} and \eqref{eq_locop_spec} - 
is a frame of $L^2(\Rdst)$. That is, for some constants $0 < A \leq B < + \infty$,
the following frame inequality holds
\begin{align}
\label{eq_frame}
A \norm{f}_2^2
\leq
\sum_{\gamma \in \Gamma} \sum_{k=1}^{N_\gamma} \abs{\ip{f}{\lambda^{\Omega_\gamma}_k\phi^{\Omega_\gamma}_k}}^2
\leq
B \norm{f}_2^2,
\qquad f \in L^2(\Rdst).
\end{align}
\end{theorem}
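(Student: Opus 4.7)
I start with the upper (Bessel) bound. Since $\|\varphi\|_2=1$, every eigenvalue $\lambda_k^{\Omega_\gamma}$ of $H_{\Omega_\gamma}$ lies in $[0,1]$, and the inequality $\lambda^2\le\lambda$ on this range gives
\[
\sum_{k=1}^{N_\gamma}(\lambda_k^{\Omega_\gamma})^2|\langle f,\phi_k^{\Omega_\gamma}\rangle|^2
\le\sum_{k=1}^\infty\lambda_k^{\Omega_\gamma}|\langle f,\phi_k^{\Omega_\gamma}\rangle|^2
=\langle H_{\Omega_\gamma}f,f\rangle
=\int_{\Omega_\gamma}|\Vstft_\varphi f|^2.
\]
With $w(z):=\sum_\gamma 1_{\Omega_\gamma}(z)$, the admissibility conditions $\Omega_\gamma\subseteq B_R(\gamma)$ and the relative separation of $\Gamma$ imply $\|w\|_\infty<\infty$; summing in $\gamma$ and using $\|\Vstft_\varphi f\|_2=\|f\|_2$ yields the Bessel constant $B\le\|w\|_\infty$.

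For the lower bound I decompose
\[
\sum_{\gamma,\,k\le N_\gamma}(\lambda_k^{\Omega_\gamma})^2|\langle f,\phi_k^{\Omega_\gamma}\rangle|^2
=\sum_\gamma\|H_{\Omega_\gamma}f\|_2^2
\;-\;\sum_\gamma\sum_{k>N_\gamma}(\lambda_k^{\Omega_\gamma})^2|\langle f,\phi_k^{\Omega_\gamma}\rangle|^2
\]
and handle the pieces separately. For the tail, a Markov-type estimate applies: $\operatorname{tr}H_{\Omega_\gamma}=|\Omega_\gamma|$ combined with the decreasing order of the eigenvalues gives $\lambda_{N_\gamma+1}^{\Omega_\gamma}\le|\Omega_\gamma|/N_\gamma\le 1/\alpha$, so
\[
\sum_\gamma\sum_{k>N_\gamma}(\lambda_k^{\Omega_\gamma})^2|\langle f,\phi_k^{\Omega_\gamma}\rangle|^2
\;\le\;\frac{1}{\alpha}\sum_\gamma\int_{\Omega_\gamma}|\Vstft_\varphi f|^2
\;\le\;\frac{\|w\|_\infty}{\alpha}\|f\|_2^2.
\]
Thus the theorem reduces to a uniform lower bound $\sum_\gamma\|H_{\Omega_\gamma}f\|_2^2\ge A_1\|f\|_2^2$ with some $A_1>0$ depending only on $\varphi$ and the cover; the frame bound is then $A=A_1-\|w\|_\infty/\alpha$, positive once $\alpha>\|w\|_\infty/A_1$.

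Proving this uniform lower bound is the main obstacle. Passing to the STFT side via $F=\Vstft_\varphi f$ and the orthogonal projection $P=\Vstft_\varphi\Vstft_\varphi^*$ of $L^2(\Rtdst)$ onto $\Vstft_\varphi(L^2(\Rdst))$, the identity $\|H_{\Omega_\gamma}f\|_2=\|P(1_{\Omega_\gamma}F)\|_2$ yields
\[
\sum_\gamma\|H_{\Omega_\gamma}f\|_2^2
\;=\;\int w|F|^2\;-\;\sum_\gamma\|(I-P)(1_{\Omega_\gamma}F)\|_2^2.
\]
Since $\int w|F|^2\ge\|f\|_2^2$ by the cover property, the task reduces to a uniform control of the defect sum. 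I would exploit the Schwartz-class decay of the reproducing kernel $K_\varphi$ of $\Vstft_\varphi(L^2)$: the reproducing identity $F(z)=\int K_\varphi(z,w)F(w)\,dw$ (valid since $F\in\Vstft_\varphi(L^2)$) recasts each defect term as an integral operator whose kernel is supported on $\Omega_\gamma\times\Omega_\gamma^c$ and weighted by $|K_\varphi(z,w)|^2$, hence concentrated near $\partial\Omega_\gamma$ at the length scale dictated by the rapid decay of $K_\varphi$. The uniform bound $\Omega_\gamma\subseteq B_R(\gamma)$ supplies a common length scale for all cells, and the bounded overlap of the cover allows the summation in $\gamma$ to be closed via a Schur-type estimate. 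This last step -- extracting a uniform positive $A_1$ out of the defect sum for an arbitrary measurable cover -- is the delicate point, and it is where the specific STFT structure (through the Schwartz decay of $K_\varphi$) enters the argument.
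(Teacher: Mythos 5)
Your Bessel bound and your reduction of the lower bound are correct, and they take a genuinely different (and in part simpler) route than the paper. The identity $\sum_{k\le N_\gamma}(\lambdagk)^2\abs{\ip{f}{\phigk}}^2=\norm{H_{\Omega_\gamma}f}_2^2-\sum_{k>N_\gamma}(\lambdagk)^2\abs{\ip{f}{\phigk}}^2$, together with the Markov-type estimate $\lambda^{\Omega_\gamma}_{N_\gamma}\le \trace(H_{\Omega_\gamma})/N_\gamma=\abs{\Omega_\gamma}/N_\gamma\le 1/\alpha$ and $\sum_\gamma\ip{H_{\Omega_\gamma}f}{f}\le\norm{w}_\infty\norm{f}_2^2$, reduces Theorem \ref{th_sample} to the single inequality $\sum_\gamma\norm{H_{\Omega_\gamma}f}_2^2\ge A_1\norm{f}_2^2$, after which $\alpha>\norm{w}_\infty/A_1$ works. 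The paper instead goes through the squared operators (Proposition \ref{prop_squared_covers}), the thresholding estimate of Theorem \ref{th_finite_rank_tf} and the eigenvalue count in Theorem \ref{th_frames_tf}; your tail estimate replaces the thresholding step and avoids the equivalence involving $(\locmg)^2$ altogether. Had you simply quoted the known norm equivalence \eqref{eq_intro_norm_equiv} (the main result of \cite{ro12}, reproved in the paper via Theorem \ref{th_almost_orth}), your argument would be a complete and arguably more elementary proof of Theorem \ref{th_sample} (though not of the $M^p_v$-versions, for which the paper's machinery is still needed).

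The gap is in the step you yourself flag as the main obstacle: your proposed proof of $\sum_\gamma\norm{H_{\Omega_\gamma}f}_2^2\ge A_1\norm{f}_2^2$ does not work. The identity $\sum_\gamma\norm{P(1_{\Omega_\gamma}F)}_2^2=\int w\abs{F}^2-\sum_\gamma\norm{(I-P)(1_{\Omega_\gamma}F)}_2^2$ is fine, but the heuristic that the defect terms are boundary-layer corrections at the length scale of the reproducing kernel fails for admissible covers: Theorem \ref{th_sample} imposes no inner regularity, so the cells $\Omega_\gamma$ may be arbitrary measurable subsets of $B_R(\gamma)$ --- in particular small cubes, thin slivers or highly irregular sets at or below the scale of $K_\varphi$. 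For such cells essentially all of $1_{\Omega_\gamma}F$ is ``near the boundary'', and the per-cell defect is a fraction arbitrarily close to $1$ of $\int_{\Omega_\gamma}\abs{F}^2$; e.g.\ for a cover by cubes of side $\delta\ll 1$ one computes $\sum_\gamma\norm{P(1_{\Omega_\gamma}F)}_2^2\approx c\,\delta^{2d}\norm{F}_2^2$, so the total defect is $(1-c\,\delta^{2d})\int w\abs{F}^2$. Consequently no Schur-type upper bound on the defect sum can beat $\int w\abs{F}^2$ by a soft, cover-independent margin; the positive remainder you need to extract is exactly the content of the almost-orthogonality statement \eqref{eq_intro_norm_equiv}, whose proof (Theorem \ref{th_almost_orth} and Appendix A, following \cite{ro12}) proceeds by approximating $\sum_\gamma\locmg$ by a synthesis-analysis factorization and invoking invertibility, not by estimating boundary defects. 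As written, your argument therefore assumes the hard inequality rather than proving it; either cite \eqref{eq_intro_norm_equiv} explicitly or supply an argument of that depth for it.
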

Theorem \ref{th_sample} is proved at the end of  Section \ref{Sec:FoE}. We also show that if an admissible cover
$\sett{\Omega_\gamma: \gamma \in \Gamma}$ satisfies the additional {\em inner regularity condition}:
\begin{itemize}
\item There exist $r>0$ such that 
\begin{align}
\label{eq_admissibility}
B_r(\gamma) \subseteq \Omega_\gamma, \qquad \gamma \in \Gamma,
\end{align}
\end{itemize}
then the statement of Theorem \ref{th_sample} remains valid, if  the functions $\lambda^{\Omega_\gamma}_k\phi^{\Omega_\gamma}_k$ are replaced by their unweighted versions $\phi^{\Omega_\gamma}_k$ (see
Theorem \ref{th_frames_tf_noeigen}). In this case $\inf_\gamma \abs{\Omega_\gamma} \geq \abs{B_\gamma(0)}$, so
the lower bound on $N_\gamma$ in the hypothesis of Theorem \ref{th_sample}
can be expressed as $N_\gamma \geq \tilde{\alpha}$, for some constant $\tilde{\alpha}>0$.

While Theorem~\ref{th_sample} was our main motivation,  it is just a sample of our results. The introduction of an abstract
model for
phase space provides sufficient flexibility to obtain  variants of Theorem~\ref{th_sample} in the
context of time-scale analysis (Theorem~\ref{Th:WavCase} ) and of discrete time-frequency representations (Theorem~\ref{Th:GabMul} ).

\begin{figure}[tb]
\centerline{\includegraphics[width = 1.05\textwidth]{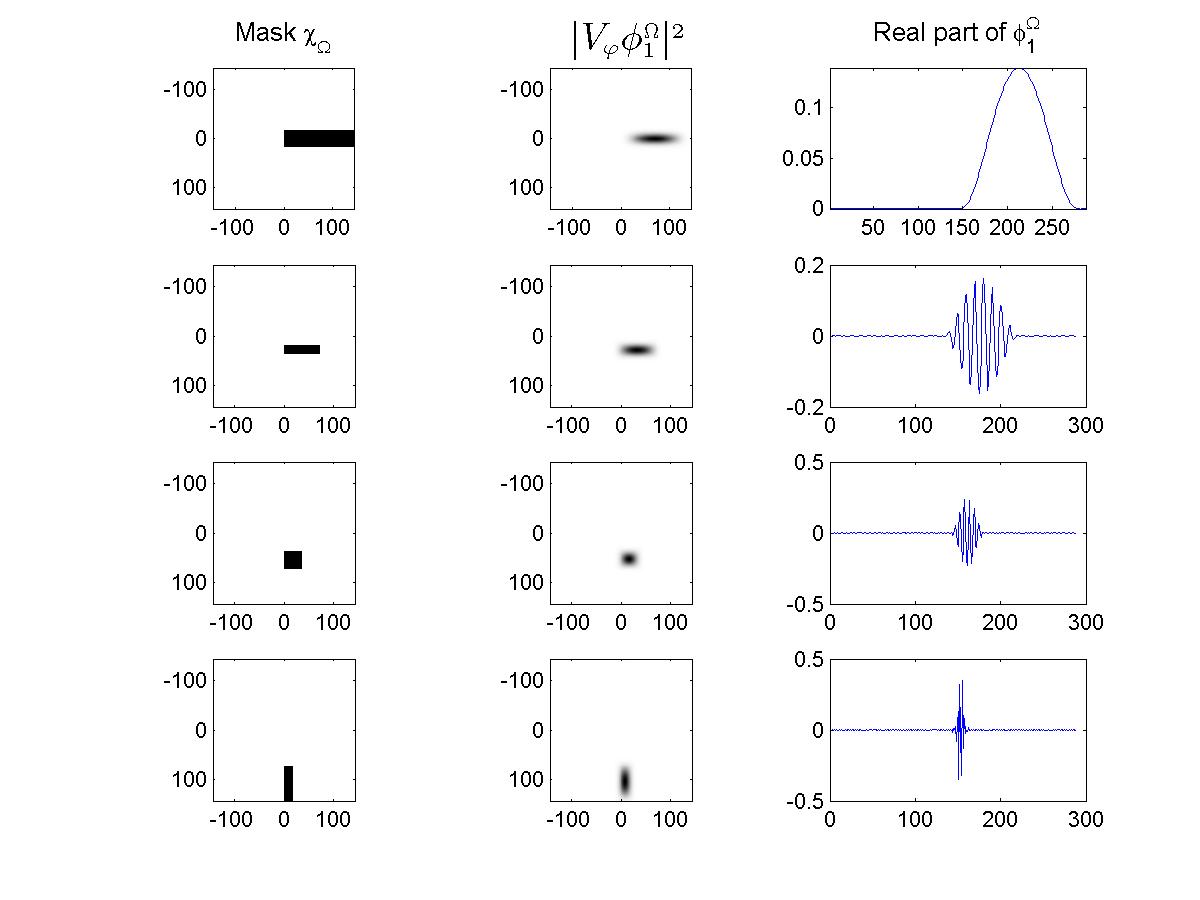}}
	\caption{Four different rectangular masks in time-frequency domain and the first eigenfunctions of the
corresponding localization operators. Middle plots show the absolute value squared of the STFT and right plots show the
real part.}
	\label{Fig:EFboxes}
\end{figure}

\subsection{Technical overview}
The proofs of the main results in this paper are based on two major observations. First,
the norm equivalence\footnote{For two non-negative functions $f,g: X \to (0,+\infty)$,
the statement $f \approx g$ means that there exist constants
$c,C \in(0,+\infty)$ such that $cf(x) \leq g(x) \leq Cf(x)$ for all $x \in X$.}
\begin{align}
\label{eq_intro_norm_equiv}
\norm{f}_2^2 \approx
\sum_{\gamma\in\Gamma} \norm{H_{\eta_\gamma} f}_2^2,
\qquad f \in \LtRd,
\end{align}
holds for a family of time-frequency localization operators
\begin{align}
\label{eq_intro_tfloc}
H_{\eta_\gamma} f(t) = \int_{\Rdst\times\Rdst}
\eta_\gamma(x,\xi) \Vstft_{\varphi}f(x,\xi) \varphi(t-x) e^{2\pi i\xi t} dx d\xi,
\end{align}
provided that the symbols $\eta_\gamma: \Rtdst \to [0,+\infty)$ satisfy
\begin{equation}\label{eq_intro_regular_symbols} 
\sum_{\gamma\in\Gamma} \eta_\gamma(z) \approx 1
\end{equation}
and the enveloping condition
\begin{align}
\label{eq_intro_envelope}
\eta_\gamma(z) \leq g(z-\gamma),
\mbox{ for some $g \in \Lisp(\Rtdst)$ and $\gamma \in \Gamma$, with $\Gamma \subseteq \Rtdst$ a lattice.} 
\end{align}
The inequalities \eqref{eq_intro_norm_equiv} were first proved in \cite{dofegr06} for symbols of the form
$\eta_\gamma(z)=g(z-\gamma) $
and $\Gamma=\Zst^{2d}$, then for a general lattice in \cite{dogr11}, and finally for fully irregular symbols
satisfying \eqref{eq_intro_envelope} in \cite{ro12}. It is interesting to note that the proofs in \cite{dofegr06,
dogr11}
are \emph{based} on the
observation that under condition \eqref{eq_intro_regular_symbols} the norm-equivalence \eqref{eq_intro_norm_equiv}
is equivalent to the fact that finitely many eigenfunctions of the operator $H_g$ generate a multi-window Gabor
frame over the lattice $\Gamma$. The proof of the general case in \cite{ro12} does not explicitly involve the
eigenfunctions
of the operators $H_{\eta_\gamma}$ nor does it rely on tools specific to Gabor frames
on lattices. Thus the question arises
whether it is also possible in the irregular case  to construct a frame consisting of finite sets of eigenfunctions of the operators
$H_{\eta_\gamma}$. Here, this question is given a positive answer.

Second, the observation that \eqref{eq_intro_norm_equiv} remains valid when the operators $H_{\eta_\gamma}$
are replaced by finite rank approximations $H^\varepsilon_{\eta_\gamma}$ obtained by thresholding their
eigenvalues,~cf.~Theorem~\ref{th_finite_rank_tf}, is the core of the proof of our main results. This
finite rank approximation is in turn achieved by proving that the operators $H_{\eta_\gamma}$ behave ``globally'' like
projectors. More precisely, in Proposition~\ref{prop_squared_covers}
we obtain the following extension of \eqref{eq_intro_norm_equiv}
\begin{align}
\label{eq_intro_norm_equiv1}
\norm{f}_2^2 \approx
\sum_{\gamma\in\Gamma} \norm{H_{\eta_\gamma} f}_2^2
\approx
\sum_{\gamma\in\Gamma} \norm{(H_{\eta_\gamma})^2 f}_2^2.
\end{align}
This will allow us to ``localize in phase-space'' the $L^2$-norm estimates relating $H^\varepsilon_{\eta_\gamma}$ 
and $H_{\eta_\gamma}$.

Note that, in general, the operators $H_{\eta_\gamma}$ have infinite rank even if $\eta_\gamma$ is the characteristic
function of a compact set (see Lemma \ref{lemma_loc_infinite_rank}). Consequently $\norm{H_{\eta_\gamma} f}_2
\not\approx
\norm{(H_{\eta_\gamma})^2 f}_2$ and therefore the global properties of the
family $\sett{(H_{\eta_\gamma})^2: \gamma \in \Gamma}$ are crucial to prove \eqref{eq_intro_norm_equiv1}. While the
squared
operators
$(H_{\eta_\gamma})^2$ are not time-frequency localization operators, their time-frequency localizing
behavior is preserved under conditions as given by  \eqref{eq_intro_envelope} and they are the prototypical example of
a family of operators that is \emph{well-spread in the time-frequency plane}. The latter notion is defined in
Section~\ref{sec_abs_mol} and we exploit the fact that the tools from \cite{ro12} are valid for these operator
families.

For clarity, we choose to accentuate  the case of time-frequency analysis but all the proofs are carried out in an
abstract
setting that yields, for example, analogous consequences in time-scale analysis.

\subsection{Organization}
The article is organized as follows. Section \ref{sec_bla} motivates the abstract model to study phase-space covers
and gives the main examples to keep in mind. Section~\ref{sec_abstract} formally introduces the abstract model for
phase-space and Section~\ref{sec_abs_mol} presents certain key technical notions, in particular the properties required
for a family of localization operators to exhibit an almost-orthogonality property. In Section~\ref{sec_tf}, we first
prove our results in the context of time-frequency analysis, where some technical problems of the abstract setting do
not arise. In addition, in the context of time-frequency analysis, we are able to extend the result on phase-space
adapted frames from $\LtRd$ to an entire class of Banach spaces, the modulation spaces, by exploiting
spectral invariance results for pseudodifferential operators.  Theorem~\ref{th_frames_tf} in Section~\ref{sec_frames_tf}
is the general version of Theorem~\ref{th_sample} stated above. This latter result in proved after
Theorem~\ref{th_frames_tf} as an application. Section~\ref{sec_abs_frames} develops the results 
in the abstract setting. These are then applied to time-scale analysis in Section~\ref{sec_wavelets}. Finally, Section
\ref{sec_gabor} contains an additional application of the abstract results to time-frequency analysis, this
time using Gabor multipliers, which are time-frequency masking operators related to a discrete time-frequency
representation (Gabor frame). The atoms thus obtained  maximize their time-frequency concentration with respect to a
weight on a discrete time-frequency grid and the resulting frames are relevant in numerical applications.

For clarity, the presentation of the results highlights the case of time-frequency analysis, which was our main
motivation. Most of the technicalities in Section \ref{sec_abstract} are irrelevant to that setting (although they are
relevant for time-scale analysis). The reader interested mainly in time-frequency analysis is encouraged to jump
directly to Section \ref{sec_tf} and then go back to Sections \ref{sec_abstract} and \ref{sec_abs_mol} having a clear
example in mind.

The article also contains two appendices providing auxiliary results related to the almost-orthogonality tools from
\cite{ro12} and spectral invariance of pseudodifferential operators.
\section{Phase-space}\label{sec_bla}
In this section, we introduce an abstract model for phase space. We first provide some motivation and the main examples
to keep in mind.
\subsection{The time-frequency plane as an example of phase-space}
\label{sec_bla_1}
In time-frequency analysis,
a distribution $f \in \mathcal{S}'(\Rdst)$ is studied by means of a time-frequency representation,
e.g. the STFT $V_\varphi f: \Rdst \times \Rdst \to \bC$, cf.~\eqref{eq_stft}.
In signal processing, where $f$ is called a signal, the domain $\Rdst$ is referred to as \emph{signal space} while
$\Rdst \times \Rdst$ is referred to as \emph{phase-space}. This terminology borrows some intuition from mechanics, where
the position of a freely moving particle is described by a point $x$ in the configuration space $\Rdst$, while
$(x,\xi)$ describes a pair of position-momentum variables, belonging to the phase-space $\Rdst \times \Rdst$.

Since $f$ can be resynthesized from $V_\varphi f$ - cf. \eqref{eq_stft_inversion} - all its properties 
can in principle be reformulated as properties of $V_\varphi f$. However, not every function on $\Rdst \times \Rdst$ is
the
STFT of a distribution on $\Rdst$. Indeed, if we let $S := V_\varphi \LtRd$ be the image of
$\LtRd$ under $V_\varphi$, it turns out that $S$ is a \emph{reproducing kernel space}
(see \cite[Chapters 3 and 11]{gr01}). This means that $S$ is a
closed subspace of $\LtRtd$ consisting of continuous functions and that  for all $(x,\xi) \in \Rdst \times
\Rdst$ the evaluation functional $S \ni F \mapsto F(x,\xi) \in \bC$ is continuous. In particular
$S \varsubsetneq \LtRd$.

The fact that $S := V_\varphi \LtRd$ is a ``small'' subspace of $\LtRd$ is important to understand the problem studied
in this article. In designing a frame for $\LtRd$ with a prescribed phase-space profile, the challenge lies in
the fact that the shapes we design in $\Rtdst$ must correspond to functions in the small subspace $S$. The role of 
time-frequency localization operators is crucial and will be detailed in the rest of this section.

For $m \in L^\infty(\Rdst \times \Rdst)$, the \emph{time-frequency localization operator} with symbol $m$
is
\begin{align}
\label{eq_loc_op_2}
H_m f(t) = \int_{\Rdst \times \Rdst} \Vstft_{\varphi}f(x,\xi) m(x,\xi) \varphi(t-x) e^{2\pi i\xi t} dx d\xi,
\qquad t \in \Rdst.
\end{align}
If  $m=1_\Omega$ is the characteristic function of a set $\Omega$ we write $H_{\Omega}$ instead of $H_{1_\Omega}$,~cf.
\eqref{eq_loc_op}.

The STFT (with respect to a fixed normalized window $\varphi \in \LtRd$) defines a map $V_\varphi: \LtRd
\to \LtRtd$. Its adjoint $V^*_\varphi:\LtRtd \to \LtRd$ is given by
\begin{align*}
V^*_\varphi F(t) = \int_{\Rdst \times \Rdst} F(x,\xi) \varphi(t-x)e^{2\pi i\xi t} dx d\xi,\qquad t\in\Rdst.
\end{align*}
The inversion formula in \eqref{eq_stft_inversion} says that $V_\varphi^*V_\varphi = I_{\LtRd}$. Hence $V_\varphi$ is an
isometry
on $\LtRd$. Consider a symbol $m \in L^\infty(\Rdst \times \Rdst)$ and the time-frequency localization operator from
\eqref{eq_loc_op_2}. Using $V_\varphi$ and $V^*_\varphi$, the definition reads
\begin{align}
\label{eq_toep_1}
H_m f = V^*_\varphi (m V_\varphi f), \qquad f \in \LtRd.
\end{align}
The fact that $V_\varphi$ is an isometry with range $S$ implies that  $V_\varphi V_\varphi^* = P_S$ is the
orthogonal projection $\LtRtd \to S$. Explicitly, for $F \in L^2(\Rdst\times\Rdst)$
\begin{align}
\label{eq_form_ps}
P_S F(x',\xi') = \int_{\Rdst \times \Rdst} F(x,\xi) 
\ip{\varphi(\cdot-x)e^{2\pi i\xi \cdot}}{\varphi(\cdot-x')e^{2\pi i\xi' \cdot}}_{L^2(\Rdst)}
dx d\xi.
\end{align}
It follows from \eqref{eq_toep_1} that
\begin{align*}
(V_\varphi H_m V^*_\varphi) V_\varphi f = P_S (m V_\varphi f), \qquad f \in \LtRd.
\end{align*}
Since $F=V_\varphi f$ is the generic form of a function in $S$, we obtain that
\begin{align*}
(V_\varphi H_m V^*_\varphi) F = P_S (m F), \qquad F \in S.
\end{align*}
This means that the time-frequency localization operator $H_\Omega$ is unitarily equivalent 
to the operator $M_m: S \to S$ given by
\begin{align}
\label{eq_toep_2}
M_m F := P_S (m F), \qquad F \in S.
\end{align}
The operator $M_m$ consists of multiplication by $m$ followed by projection onto $S$. 
We will call these operators \emph{phase-space} multipliers: they apply a mask $m$ to
a function $F=V_\varphi f \in S$, typically yielding a function $m\cdot F\notin S$, and then provide the best $L^2$
approximation of $m\cdot F$ within $S$. (These operators are sometimes also called Toeplitz operators for the STFT.)

\subsection{Other transforms}
\label{sec_bla_2}
The interpretation of time-frequency localization operators as phase-space multipliers (multiplication followed by
projection) is central to this article. We will consider a general setting where the role of the STFT can be replaced by
other transforms. An important example is the wavelet transform of a function $L^2(\Rdst)$
with respect to an adequate window $\psi \in \mathcal{S}(\Rdst)$,
\begin{align}
\label{eq_wav_trans}
W_\psi f(x,s) = s^{-d/2}\int_{\Rdst} f(t)
\overline{\psi \left(\frac{t-x}{s}\right)}dt,
\qquad x \in \Rdst, s>0.
\end{align} 
(See Section \ref{sec_wavelets} for details.) In analogy with time-frequency analysis,
we still call $\Rdst \times \Rst_+$ the phase-space associated with the wavelet transform. This terminology is
justified by the fact that there is a formal analogy between the two contexts. The range of the wavelet transform
$W_\psi \LtRd$ is, under suitable assumptions on $\psi$, a reproducing kernel subspace of
$L^2(\Rdst \times \Rst_+, s^{-(d+1)}dx ds)$
and time-scale localization operators are defined in analogy to the time-frequency localization operators (see
Section \ref{sec_wavelets} for explicit formulas).

The  similarity between the time-frequency and time-scale contexts stems from the fact that both are
associated with representations of a locally-compact group. In the former case, the Heisenberg group
acts on $\LtRd$ by translations and modulations, while in the latter, the affine group acts by translations and
dilations. The theory of \emph{coorbit spaces}~\cite{fegr89,fegr89-1} treats this situation in general, studying the transform
associated with the representation coefficients of a group action and associating a range of function spaces to it. The
model for abstract phase space to be introduced in Section~\ref{sec_abstract} is largely inspired by \cite{fegr89}. It 
allows for the simultaneous treatment of various settings, since it makes no explicit reference to an integral
transform. The main ingredients are a ``big'' space $E$ 
called the environment and a ``small'' subspace called the atomic space. In the  time-frequency example  $E=\LtRtd$ and $S=V_\varphi \LtRd$. Similarly, the case of time-scale analysis
(wavelets) uses $E=L^2(\Rdst \times \Rst_+, s^{-(d+1)}dx ds)$ and $S=W_\psi \LtRd$.
Building on the intuition
provided by time-frequency and time-scale analysis, we think of $S$ as a collection
of phase-space representations for functions in $\LtRd$, while the environment $E$ is big enough for certain
operations, such as pointwise multiplication by arbitrary bounded measurable functions, 
to be well-defined. One central assumption of the model in Section \ref{sec_abstract} is then the existence of a 
projector $P:E\to S$, so that one can consider phase-space multipliers like in \eqref{eq_toep_2}.

The price to pay for this unified approach is a certain level of technicality. The Euclidean space is not
suitable any more as a model for the domain of the functions in $S$: we need to consider a general locally-compact
group. To provide an easily accessible example,  the main concepts  will be spelled out in
the concrete case of time-frequency analysis.

\subsection{Abstract phase-space}
\label{sec_abstract}
\subsubsection{Locally compact groups and function spaces}
Throughout the article $\Gc$ will be a locally compact, $\sigma$-compact,
topological group with modular function $\Delta$. 
The left Haar measure of a set $X \subseteq \Gc$ will be denoted by
$\mes{X}$. Integration will always be considered with respect to the left Haar
measure.
For $x \in \Gc$, we denote by $L_x$ and $R_x$ the operators of
left and right translation, defined by
$L_x f(y) = f(x^{-1}y)$ and $R_x f(y) = f(yx)$.
We also consider the involution $\iv{f}(x) = f(x^{-1})$.

Given two non-negative functions $f,g$ we write $f \lesssim g$ if there
exists a constant $C \geq 0$ such that $f \leq C g$. We say that $f \approx
g$ if both $f \lesssim g$ and $g \lesssim f$.
The characteristic function of a set $A$ will be denoted by $1_A$.

A set $\Gamma \subseteq \Gc$ is called \emph{relatively separated} if for some (or any) $V \subseteq \Gc$, relatively
compact neighborhood of the identity, the quantity - called the \emph{spreadness of $\Gamma$} -
\begin{align}
\label{eq_spread}
\rel(\Gamma) = \rel_V(\Gamma)
:= \sup_{x \in \Gc} \# (\Gamma \cap x V)
\end{align}
is finite, i.e. if the amount of elements of $\Gamma$ that lie in any left
translate of $V$ is uniformly bounded.

The following definition introduces a class of function spaces on $\Gc$. The
Lebesgue spaces $L^p(\Gc)$ are natural examples.
\begin{definition}[Banach function spaces] A Banach space $\Esp$
is called a solid, translation invariant BF space if it satisfies the following.
\begin{itemize}
\item[(i)] $\Esp$ is continuously embedded into $L_{\mathrm{loc}}^1(\Gc)$,
the space of complex-valued locally integrable functions on $\Gc$.

\item[(ii)] Whenever $f \in \Esp$ and $g: \Gc \to \bC$ is a measurable function 
such that $\abs{g(x)} \leq \abs{f(x)}$ a.e., it is true that $g \in \Esp$
and $\norm{g}_\Esp \leq \norm{f}_\Esp$.

\item[(iii)] $\Esp$ is closed under left and right translations
(i.e. $L_x \Esp \subseteq \Esp$ and $R_x \Esp \subseteq \Esp$, for all $x \in
\Gc$) and the following relations hold with the corresponding norm estimates
\begin{align}
\label{conv_mod_assumption}
L^1_u(\Gc)*\Esp \subseteq \Esp
\mbox{  and  }
\Esp*L^1_v(\Gc) \subseteq \Esp,
\end{align}
where $u(x) := \norm{L_x}_{\Esp \to \Esp}$,
$v(x) := \Delta(x^{-1})\norm{R_{x^{-1}}}_{\Esp \to \Esp}$.
\end{itemize}
\end{definition}
\begin{definition}[Admissible weights for a BF space]
\label{def_adm_for}
Given a solid, translation invariant BF space $\Esp$, a function $w: \Gc \to (0,+\infty)$ satisfying 
\begin{align}
\label{weight_w_delta}
&w(x) = \Delta(x^{-1})w(x^{-1}),
\\
\label{weight_w_submult}
&w(xy) \leq w(x)w(y)
\mbox{ (submultiplicativity)},
\\
\label{weight_w_admissible}
&w(x) \geq \consEw \max \sett{u(x), \; u(x^{-1}), \; v(x),
\; \Delta(x^{-1})v(x^{-1})},
\\
\nonumber
&\qquad \mbox{where } u(x) := \norm{L_x}_{\Esp \to \Esp},\, v(x) := \Delta(x^{-1})\norm{R_{x^{-1}}}_{\Esp \to \Esp},
\end{align}
for some constant $\consEw>0$ is called an \emph{admissible weight} for $\Esp$.
\end{definition}
If $w$ is admissible for $\Esp$, it follows that $w(x) \gtrsim 1$,
$L^1_w * \Esp \subseteq \Esp$
and $\Esp*L^1_w \subseteq \Esp$, and  the constants in the corresponding norm estimates  depend only on $\consEw$,~cf.~\cite{fegr89}.

For a solid translation invariant BF space $\Esp$ and 
$\Gamma \subseteq \Gc$ a relatively separated set,
we construct discrete versions $\Espd$ as follows. 
Fix $V$, a symmetric relatively compact neighborhood of the identity and
let
\begin{align*}
\Espd = \Espd(\Gamma) := \set{c \in {\bC}^\Gamma}
{\sum_{\gamma\in \Gamma} \abs{c_\gamma} 1_{\gamma V} \in \Esp},\mbox{ with norm }
 \norm{\left(c_\gamma \right)_{\gamma \in \Gamma}}_{\Espd}
 := \Bignorm{\sum_{\gamma\in \Gamma} \abs{c_\gamma} 1_{\gamma V}}_\Esp.
\end{align*}
The definition depends on $V$, but a different choice of $V$ yields the same space with equivalent norm
(this is a consequence of the right invariance of $\Esp$,
see for example \cite[Lemma 2.2]{ra07-4}).
For $\Esp=L^p_w$, the corresponding discrete space $\Espd(\Gamma)$ is just $\ell^p_w(\Gamma)$, where the (admissible)
weight $w$ is restricted to the set $\Gamma$.

We next define the left \emph{Wiener amalgam space} with respect to a solid, translation invariant BF space $\Esp$.
Let $V$ be again a symmetric, relatively compact neighborhood of the identity. For a locally
bounded
function $f: \Gc \to \bC$ consider the left \emph{local maximum function} defined by
\begin{align*}
f^\# (x) := \supess_{y \in V} \abs{f(xy)}
=\norm{f \cdot (L_x 1_V)}_\infty
, \qquad x \in \Gc,
\end{align*}
and similarly the right local maximum function is given by 
$f_\# (x) := \supess_{y \in V} \abs{f(yx)} = \norm{f \cdot (R_x 1_V)}_\infty$.
\begin{definition}[Wiener amalgam spaces]
\begin{align*}
W(\Linfsp, \Esp)(\Gc) := \set{f \in L^\infty_{\textit{loc}}(\Gc)}{f^\# \in \Esp},
\end{align*}
with norm $\norm{f}_{W(\Linfsp, \Esp)} := \norm{f^\#}_\Esp$. 
The right Wiener amalgam space $W_R(\Linfsp, \Esp)$ is defined similarly,
this time using the norm $\norm{f}_{W_R(\Linfsp, \Esp)} := \norm{f_\#}_\Esp$.
\end{definition}
A different choice of $V$ yields
the same spaces with equivalent norms (see for example \cite[Theorem 1]{fe83} or \cite{fe90}).
When $\Esp$ is a weighted $L^p$ space on $\Rdst$, the corresponding amalgam space coincides with the classical
$L^\infty-\ell^p$ amalgam space \cite{ho75, fost85}. In the present article we will be mainly interested in the spaces
$W(L^\infty,L^1_w)$ and
$W_R(L^\infty,L^1_w)$ for which we  need the following facts.
\begin{prop}
\label{prop_conv_algebra}
The spaces $W(L^\infty,L^1_w)$ and $W_R(L^\infty,L^1_w)$ are convolution algebras. That is,
the relations $W(L^\infty,L^1_w) * W(L^\infty,L^1_w) \into W(L^\infty,L^1_w)$ and
$W_R(L^\infty,L^1_w) * W_R(L^\infty,L^1_w) \into W_R(L^\infty,L^1_w)$ hold together with the
corresponding norm estimates.
\end{prop}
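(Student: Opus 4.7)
The plan is to reduce the algebra property for the amalgam spaces to two standard ingredients: a pointwise domination of the local maximum function of a convolution by the convolution of the local maximum functions, and the fact that the weighted Lebesgue space $L^1_w(\mathcal{G})$ is itself a convolution algebra under the submultiplicativity built into the admissibility of $w$. With these in place, the proposition follows by chaining the inequalities.

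For the left amalgam $W(L^\infty, L^1_w)$, the key pointwise estimate I would establish is
\[
(f*g)^\#(x) \;\leq\; (f^\# * g^\#)(x).
\]
Starting from $(f*g)(xu) = \int_{\mathcal{G}} f(y)\, g(y^{-1}xu)\, dy$ and taking the essential supremum over $u \in V$, I push the supremum inside the integral. For each fixed $x$, the definition $g^\#(y^{-1}x) = \operatorname{esssup}_{u \in V}|g(y^{-1}xu)|$ gives $|g(y^{-1}xu)| \leq g^\#(y^{-1}x)$ off a null set in $u$ depending on $y$; a Fubini argument on $V \times \mathcal{G}$ transfers this to a pointwise inequality valid for a.e.\ $u$ jointly, yielding $(f*g)^\#(x) \leq \int |f(y)|\, g^\#(y^{-1}x)\,dy$. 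Since $V$ is a neighborhood of the identity, $|f| \leq f^\#$ almost everywhere, which upgrades this to the claimed bound. Taking $L^1_w$ norms and invoking the submultiplicativity $w(xy) \leq w(x)w(y)$ (part of admissibility) together with left-invariance of Haar measure gives
\[
\|f*g\|_{W(L^\infty,L^1_w)} = \|(f*g)^\#\|_{L^1_w} \leq \|f^\# * g^\#\|_{L^1_w} \lesssim \|f^\#\|_{L^1_w}\,\|g^\#\|_{L^1_w} = \|f\|_{W(L^\infty,L^1_w)}\,\|g\|_{W(L^\infty,L^1_w)}.
\]

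The right amalgam case $W_R(L^\infty, L^1_w)$ is handled by the same scheme with the right local maximum function $f_\#(x) = \operatorname{esssup}_{u \in V}|f(ux)|$. After substituting $y = uz$ in $(f*g)(ux) = \int f(y)g(y^{-1}ux)\,dy$ (permitted by left-invariance of Haar measure), the analogous Fubini interchange delivers $(f*g)_\# \leq f_\# * g_\#$, and then the same $L^1_w$ convolution bound concludes. The main technical obstacle is the justification of the interchange of essential supremum and integral: one must argue carefully about measurability of the local maximum functions on a general locally compact $\sigma$-compact group (typically by selecting a measurable version and appealing to the $\sigma$-compactness of $V$) and use Fubini to avoid pointwise ambiguities in the definition of $g^\#$. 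Beyond this, the computations are essentially those of the classical algebra property of $L^1_w$, adapted through the local-maximum machinery.
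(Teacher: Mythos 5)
Your proof is correct, but it follows a genuinely different and more self-contained route than the paper. You establish the pointwise domination $(f*g)^\# \le |f|*g^\# \le f^\#*g^\#$ (and its right-sided analogue $(f*g)_\# \le f_\#*g_\#$ via the substitution $y=uz$) and then reduce to the convolution algebra property of $L^1_w$, which follows from the submultiplicativity of $w$ and left invariance of Haar measure; the Fubini/measurability issues you flag are real but standard, and the a.e.\ inequality $|f|\le f^\#$ you use deserves its own short justification (a covering or Fubini-type argument using $\sigma$-compactness), since $f^\#$ is an \emph{essential} supremum and the remark that $e\in V$ does not by itself settle it. The paper instead argues by reduction to quoted results: it invokes the translation-invariance relation $L^1_w * W(L^\infty,L^1_w) \hookrightarrow W(L^\infty,L^1_w)$ from Feichtinger--Gr\"ochenig, combines it with the embedding $W(L^\infty,L^1_w) \hookrightarrow L^1_w$ to get the left algebra property, and then transfers the statement to $W_R(L^\infty,L^1_w)$ through the involution $f^\vee(x)=f(x^{-1})$, which reverses convolutions and maps $W_R(L^\infty,L^1_w)$ isometrically onto $W(L^\infty,L^1_w)$ because $V=V^{-1}$ and $w(x)=\Delta(x^{-1})w(x^{-1})$, cf.\ \eqref{weight_w_delta}. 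The paper's route is shorter and hides the measurability technicalities inside the cited results, while making explicit the role of the weight symmetry in passing between left and right amalgams; your route avoids external references, treats the two sides symmetrically by direct computation, and yields the stronger pointwise estimate $(f*g)^\# \le f^\#*g^\#$, from which the norm inequality follows with an essentially multiplicative constant.
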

\begin{proof}
The left amalgam space satisfies the (translation invariance)
relation $L^1_w * W(L^\infty,L^1_w) \into W(L^\infty,L^1_w)$. This is a particular case of \cite[Theorem
7.1]{fegr89-1}
and can also be readily deduced from the definitions. Since $W(L^\infty,L^1_w) \into L^1_w$, the statement about
$W(L^\infty,L^1_w)$ follows. The involution $\iv{f}(x) = f(x^{-1})$ maps $W_R(L^\infty,L^1_w)$ isometrically onto
$W(L^\infty,L^1_w)$ (because $V=V^{-1}$) and satisfies $\iv{(f*g)}=\iv{g}*\iv{f}$. Hence the statement about the right
amalgam space follows from the one about the left one.
\end{proof}

\subsubsection{The model for abstract phase-space}
\label{sec_model}
In the abstract model for phase-space we consider a solid BF space $\Esp$ (called the environment) and a certain
distinguished subspace $\SEsp$,  which is the range of an idempotent operator $P$. The precise form of the model is
taken from \cite{ro12} and is designed to fit the theory in \cite{fegr89} (see also \cite{dastte04-1,nasu10}).
We list a number of ingredients in the form of two assumptions: (A1) and (A2).
\begin{itemize}
\item[(A1)]
\begin{itemize}
\item  $\Esp$ is a solid, translation invariant BF space,
called \emph{the environment}.
\item $w$ is an admissible weight for $\Esp$.
\item $\SEsp$ is a closed complemented subspace of $\Esp$, called \emph{the atomic subspace}.
\item Each function in $\SEsp$ is continuous.
\end{itemize}
\item[(A2)] $P$ is an operator and $\kernelenv$ is a non-negative function 
satisfying the following.
\begin{itemize}
\item $P: W(L^1,L^\infty_{1/w}) \to L^\infty_{1/w}$ is
a (bounded) linear operator,
\item $P(\Esp) = \SEsp$ and $P(f) = f, \mbox{for all } f \in \SEsp$,
\item $\kernelenv \in W(L^\infty,L^1_w) \cap W_R(L^\infty,L^1_w)$,
\item For $f \in W(L^1,L^\infty_{1/w})$,
\begin{align}
\label{eq_P_dominated}
\abs{P(f)(x)} \leq \int_{\Gc} \abs{f(y)} \kernelenv(y^{-1}x) dy,
\qquad x \in \Gc.
\end{align}
\end{itemize}
\end{itemize}
When $\Esp=L^2(\Gc)$ we will additionally assume the following.
\begin{itemize}
\item[(A3)] $P: L^2(\Gc) \to \SLt$ is the orthogonal projection.
\end{itemize}
Note that Assumption (A2) means that the retraction $\Esp \to \SEsp$ is given by an operator
dominated by right convolution with a kernel in $W(L^\infty,L^1_w) \cap W_R(L^\infty,L^1_w)$.
\begin{example}
\label{ex_tf}
The discussion in Section \ref{sec_bla_1} provides the main example of the abstract model. If $\varphi \in
\mathcal{S}(\Rdst)$ we can let $\Gc=\Rdst \times \Rdst$, $\Esp = \LtRtd$ and $S=\SEsp = V_\varphi \LtRd$.
For this choice $w \equiv 1$ is an admissible weight. We also let
$P=P_S: \LtRtd \to \SEsp$ be the orthogonal projection. To see that $P$ satisfies (A2), note that from
\eqref{eq_form_ps}
\begin{align*}
\abs{P_S F(x',\xi')} &\leq
\int_{\Rdst \times \Rdst} \abs{F(x,\xi)}
\abs{\ip{\varphi(\cdot-x)e^{2\pi i\xi \cdot}}{\varphi(\cdot-x')e^{2\pi i\xi' \cdot}}}
dx d\xi
\\
&=
\int_{\Rdst \times \Rdst} \abs{F(x,\xi)} \abs{V_\varphi \varphi(x'-x,\xi'-\xi)} dx d\xi.
\end{align*}
It is easy to see that the fact that $\varphi \in \mathcal{S}(\Rdst)$ implies
that $K:=\abs{V_\varphi \varphi} \in W(L^\infty,L^1)(\Rtdst)$.
\end{example}

For the remainder of Section \ref{sec_abstract}, we assume (A1) and (A2). Under these conditions the following holds.
\begin{prop}\cite[Proposition 3]{ro12}
\label{prop_P_into_am}
\mbox{}

\begin{itemize}
\item[(a)] $P$ boundedly maps $\Esp$ into $W(L^\infty,\Esp)$.
\item[(b)] $\SEsp \hookrightarrow W(L^\infty, \Esp)$.
\item[(c)] If $f \in W(L^1,L^\infty_{1/w})$, then
$\norm{P(f)}_{L^\infty_{1/w}} \lesssim
\norm{f}_{W(L^1,L^\infty_{1/w})} \norm{\kernelenv}_{W_R(L^\infty,L^1_w)}$.
\item[(d)] If $f \in W(L^1,L^\infty)$, then
$\norm{P(f)}_{L^\infty} \lesssim
\norm{f}_{W(L^1,L^\infty)} \norm{\kernelenv}_{W_R(L^\infty,L^1_w)}$.
\end{itemize}
\end{prop}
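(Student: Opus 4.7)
The starting point for all four parts is the pointwise domination in (A2): for $f \in W(L^1, L^\infty_{1/w})$,
\begin{align*}
|P(f)(x)| \leq \int_\Gc |f(y)|\, \kernelenv(y^{-1}x)\, dy = (|f| \ast \kernelenv)(x),
\qquad x \in \Gc.
\end{align*}
Each of the four conclusions reduces to a convolution estimate for $|f| \ast \kernelenv$, exploiting that $\kernelenv \in W(L^\infty, L^1_w) \cap \wright$ and that $w$ is admissible for~$\Esp$.

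For (a), fix the symmetric relatively compact neighborhood $V$ used to define the amalgam norms and pull the left local supremum inside the convolution:
\begin{align*}
(P(f))^\#(x) = \supess_{y \in V} |P(f)(xy)|
\leq \int_\Gc |f(z)|\, \supess_{y \in V} \kernelenv(z^{-1}xy)\, dz
= (|f| \ast \kernelenv^\#)(x),
\end{align*}
where $\kernelenv^\#$ is the left local maximum of $\kernelenv$. Since $\kernelenv \in W(L^\infty, L^1_w)$, one has $\kernelenv^\# \in L^1_w$, and the admissibility of $w$ (namely $v \leq w/\consEw$) yields $\Esp \ast L^1_w \hookrightarrow \Esp$ with norm controlled by $\consEw$. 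Taking $\Esp$-norms gives $\norm{P(f)}_{W(L^\infty, \Esp)} \lesssim \norm{f}_\Esp$. Part (b) follows immediately: for $f \in \SEsp$ one has $P(f) = f$, so applying (a) yields $\norm{f}_{W(L^\infty,\Esp)} \lesssim \norm{f}_\Esp$, i.e. $\SEsp \hookrightarrow W(L^\infty, \Esp)$.

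For (c) and (d), the same pointwise bound reduces the task to a Young-type amalgam estimate of the form $W(L^1, L^\infty_{\star}) \ast \wright \hookrightarrow L^\infty_\star$, with $\star = 1/w$ for (c) and $\star$ trivial for (d). To prove it, cover $\Gc$ by translates of $V$ and split $(|f| \ast \kernelenv)(x) = \int |f(y)| \kernelenv(y^{-1}x)\, dy$ accordingly. On each tile the local $L^1$ mass of $f$ pairs with the local $L^\infty$ bound on $\kernelenv$ (captured by $\kernelenv_\# \in L^1_w$, since $\kernelenv \in \wright$), producing a discrete convolution of global profiles. For (d) the resulting $L^\infty \ast L^1_w \hookrightarrow L^\infty$ is direct from admissibility. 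For (c), the weighted estimate $L^\infty_{1/w} \ast L^1_w \hookrightarrow L^\infty_{1/w}$ is obtained by writing $|f(y)| \leq \norm{f}_{L^\infty_{1/w}}\, w(y)$, using submultiplicativity \eqref{weight_w_submult} in the form $w(y) \leq w(x) w(x^{-1}y)$, and applying the change of variable $z = y^{-1}x$ together with the modular identity \eqref{weight_w_delta} to convert the remaining integral into $\int w(z)\, \kernelenv(z)\, dz \lesssim \norm{\kernelenv}_{\wright}$.

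The main technical subtlety is the left/right bookkeeping: the target amalgam $W(L^\infty,\Esp)$ is defined via the \emph{left} local maximum, while the kernel $\kernelenv$ sits on the right of the convolution $|f| \ast \kernelenv$. This is precisely why part (a) ultimately invokes $\kernelenv \in W(L^\infty, L^1_w)$, whereas parts (c)--(d) single out the right version $\kernelenv \in \wright$; both memberships are needed for the full statement, which is why (A2) demands membership in both amalgam spaces. Beyond this alignment and the careful use of \eqref{weight_w_delta} to handle the modular function in the change of variable, the remaining manipulations are routine amalgam convolution estimates in the Feichtinger--Gr\"ochenig framework.
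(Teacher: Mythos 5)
Your argument is sound, and it is worth noting that the paper itself gives no proof of this statement: Proposition \ref{prop_P_into_am} is quoted verbatim from \cite[Proposition 3]{ro12}. What you wrote is the standard amalgam-convolution argument one would expect to find there: the pointwise domination \eqref{eq_P_dominated} reduces everything to estimates on $\abs{f}*\kernelenv$; for (a) the left local maximum passes onto the kernel, giving $(P f)^\# \leq \abs{f}*\kernelenv^\#$ with $\kernelenv^\# \in L^1_w$, and one concludes by $\Esp * L^1_w \hookrightarrow \Esp$ (a consequence of the admissibility of $w$, as noted after Definition \ref{def_adm_for}); (b) is immediate from $P|_{\SEsp}=\mathrm{id}$; and (c)--(d) follow from the local--global splitting against translates of $V$, pairing the local $L^1$ mass of $f$ with the right local maximum $\kernelenv_\#\in L^1_w$, then using \eqref{weight_w_submult} and \eqref{weight_w_delta} in the change of variables. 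Your remark on the left/right bookkeeping (why (a) uses $\kernelenv\in W(L^\infty,L^1_w)$ while (c)--(d) use $\kernelenv\in\wright$) is exactly the right point.

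Two small spots to tighten. First, in (a) you apply \eqref{eq_P_dominated} to $f\in\Esp$, whereas (A2) states it for $f\in W(L^1,L^\infty_{1/w})$; you should say explicitly that $\Esp\subseteq W(L^1,L^\infty_{1/w})$, which is implicit in (A2) (since $P(\Esp)=\SEsp$ presupposes $P$ is defined on $\Esp$) and also follows from the admissibility of $w$ as in \cite{fegr89,ro12}. Second, in (c) the inequality $\abs{f(y)}\leq\norm{f}_{L^\infty_{1/w}}\,w(y)$ cannot be applied to $f$ itself, since elements of $W(L^1,L^\infty_{1/w})$ need not be locally bounded; it must be applied to the global profile produced by the tiling, i.e. to $u\mapsto\int_{uV}\abs{f}\leq\norm{f}_{W(L^1,L^\infty_{1/w})}\,w(u)$ (up to normalization), after which $w(u)\leq w(x)w(x^{-1}u)$ and the substitution $z=u^{-1}x$ with \eqref{weight_w_delta} yield $\int w(z)\kernelenv_\#(z)\,dz=\norm{\kernelenv}_{\wright}$, exactly as you indicate. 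With these clarifications (and the routine bounded-overlap or continuous-averaging step that justifies summing the tiles), the proof is complete.
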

\begin{rem}
Since $w \gtrsim 1$, $L^\infty \into L^\infty_{1/w}$.
\end{rem}
\begin{rem}
\label{rem_setting_unif}
The estimates in Proposition \ref{prop_P_into_am} hold uniformly for all the
spaces $\Esp$ with the same weight $w$ and the same constant $\consEw$ (cf.
\eqref{weight_w_admissible}). This is relevant to the applications, where
the same projection $P$ is used with different spaces $\Esp$ and corresponding subspaces
$\SEsp=P(\Esp)$.
\end{rem}

\subsubsection{Phase-space multipliers}
\label{sec_phmul}
Recall the projector $P:\Esp \to \SEsp$ from (A2). For $m \in L^\infty(\Gc)$, the \emph{phase-space multiplier}
with \emph{symbol} $m$ is the operator $M_m: \SEsp \to \SEsp$ defined by
\begin{align*}
\multm(f) := P(mf), \qquad f \in \SEsp.
\end{align*}
It is bounded by Proposition~\ref{prop_P_into_am} and the solidity of $\Esp$:
\begin{align}
\label{eq_bound_M}
\norm{\multm(f)}_\Esp \lesssim \norm{m}_\infty \norm{f}_\Esp,
\quad f\in\SEsp.
\end{align}
\begin{example}
As discussed in Section \ref{sec_bla_1}, time-frequency localization operators \eqref{eq_loc_op_2} are
unitarily equivalent via the STFT to phase-space multipliers, with $\Esp$, $\SEsp$ and $P$ as in Example
\ref{ex_tf}.

In the context of time-scale analysis, one can define operators in analogy to time-frequency
localization operators  using the wavelet transform in \eqref{eq_wav_trans} instead of the STFT. These are called time-scale localization operators or wavelet multipliers \cite{dapa88, wo99, limowo08}. With
an adequate choice of $\Esp$, $\SEsp$ and $P$, time-scale localization operators are unitarily equivalent to phase-space
multipliers. This is developed in Section \ref{sec_wavelets}.
\end{example}
For future reference we note some Hilbert-space properties of phase-space multipliers
(when $\Esp=\Ltsp(\Gc)$), which are well-known for time-frequency localization operators, cf.~\cite{boco02,feno03,wo02,
boto08}.

\begin{prop}
\label{prop_mult_hilbert}
Let $\Esp=\Ltsp(\Gc)$ and assume (A1), (A2) and (A3). Then the following hold.
\begin{itemize}
\item[(a)] Let $m_1, m_2 \in \Linfsp(\Gc)$ be real-valued.
If $m_1 \leq m_2$ a.e., then $M_{m_1} \leq M_{m_2}$ as operators.
In particular if $m$ is non-negative and bounded, then $\multm$ is a positive operator.

\item[(b)] Let $m \in \Lisp(\Gc) \cap \Linfsp(\Gc)$ be non-negative. Then $\multm:\SLt \to \SLt$ is trace-class
and $\trace(\multm) \lesssim \norm{m}_1$.
\end{itemize}
\end{prop}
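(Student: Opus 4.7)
The plan is to handle (a) by a short quadratic-form computation using the self-adjointness of $P$ from (A3), and to reduce (b) to a uniform bound on the diagonal of the reproducing kernel of $\SLt$ via a Hilbert--Schmidt factorization of $M_m$.

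For (a), since $P=P^{\ast}$ and $Pf=f$ for $f\in\SLt$, one has $\ip{M_m f}{f}=\ip{P(mf)}{f}=\ip{mf}{Pf}=\int_{\Gc}m(x)|f(x)|^2\,dx$. For real symbols the quadratic form is real, so $M_m$ is self-adjoint, and monotonicity of the right-hand integral in $m$ yields $M_{m_1}\le M_{m_2}$ whenever $m_1\le m_2$ a.e.; positivity of $M_m$ for $m\ge 0$ is then the special case $m_1\equiv 0$.

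For (b), given $m\ge 0$ in $L^1\cap L^{\infty}$, I would set $B\colon\SLt\to L^2(\Gc)$ by $Bf:=m^{1/2}f$ (viewed inside $L^2(\Gc)$). The adjoint is $B^{\ast}g=P(m^{1/2}g)$, hence $B^{\ast}Bf=P(mf)=M_m f$; so $M_m=B^{\ast}B$ is positive and $\trace(M_m)=\|B\|_{HS}^2$. Computing $\|B\|_{HS}^2$ on any orthonormal basis $\{e_k\}$ of $\SLt$ and exchanging sum and integral by Tonelli (legal because $m\ge 0$) gives $\trace(M_m)=\int_{\Gc}m(x)\,\mathcal{K}(x,x)\,dx$, where $\mathcal{K}(x,x):=\sum_k|e_k(x)|^2$ is the diagonal of the reproducing kernel of $\SLt$. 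The desired bound $\trace(M_m)\lesssim\|m\|_1$ thus reduces to $\sup_{x\in\Gc}\mathcal{K}(x,x)<\infty$.

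For the uniform kernel bound, any $f\in\SLt$ satisfies $f=Pf$, so by (A2) and Cauchy--Schwarz $|f(x)|^2\le\|f\|_2^2\int_{\Gc}\kernelenv(y^{-1}x)^2\,dy$. Using the involution formula $\int g(y^{-1})\,dy=\int g(y)\Delta(y^{-1})\,dy$ and the right-translation rule (modulo the modular factor) rewrites the inner integral as $\int_{\Gc}\kernelenv(z)^2\Delta(z^{-1})\,dz$, independent of $x$; taking the supremum over unit $f\in\SLt$ reproduces $\mathcal{K}(x,x)$ on the left. The main obstacle is verifying that this $x$-independent constant is finite. This is the only place where the double amalgam hypothesis $\kernelenv\in W(L^{\infty},L^1_w)\cap W_R(L^{\infty},L^1_w)$ really enters, combined with $w\gtrsim 1$ and the symmetry $w(x)=\Delta(x^{-1})w(x^{-1})$: these conditions force $\kernelenv$ to be essentially bounded and integrable against right Haar measure, which is precisely the finiteness of $\int_{\Gc}\kernelenv(z)^2\Delta(z^{-1})\,dz$. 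Granting this, $\trace(M_m)\lesssim\|m\|_1$ follows at once.
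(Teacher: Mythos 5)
Your proposal is correct and follows essentially the same route as the paper: part (a) is the identical quadratic-form computation via the self-adjointness of $P$, and in part (b) both arguments reduce the trace bound to a uniform bound on the reproducing-kernel diagonal $\sum_k\abs{e_k(x)}^2$ obtained from the domination \eqref{eq_P_dominated} and Cauchy--Schwarz. Your Hilbert--Schmidt factorization $M_m=B^*B$ and the explicit substitution giving $\int_\Gc \kernelenv(y^{-1}x)^2\,dy=\int_\Gc \kernelenv(z)^2\Delta(z^{-1})\,dz=\norm{\iv{\kernelenv}}_2^2$ are just repackagings of the paper's use of the positive-operator trace test and of the fact that $\iv{\kernelenv}\in\winr\subseteq\Ltsp$.
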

\begin{proof}
To prove (a), let $f \in \SLt$ and note that by (A3),
\begin{align*}
\ip{M_{m_1} f}{f} &= \ip{P(m_1 f)}{f} =\ip{m_1 f}{f} 
=\int_\Gc m_1(x) \abs{f(x)}^2 \, dx
\\
&\leq \int_\Gc m_2(x) \abs{f(x)}^2 \, dx
=\ip{M_{m_2} f}{f}.
\end{align*}
Let us now prove (b). For $f \in \SLt$ and $x \in \Gc$, by Assumption (A2) - cf. \eqref{eq_P_dominated}
\begin{align*}
\abs{f(x)} \leq \int_{\Gc} \abs{f(y)} \kernelenv(y^{-1}x) dy
=
\int_{\Gc} \abs{f(y)} \iv{\kernelenv}(x^{-1}y) dy
\leq \norm{f}_2 \norm{\iv{\kernelenv}}_2, 
\end{align*}
which is finite because $\iv{\kernelenv} \in \winr \subseteq \Ltsp$.
Hence $f(x) = \ip{f}{E_x}$, for some function $E_x \in \SLt$
with $\norm{E_x}_2 \leq \norm{\iv{\kernelenv}}_2$.

Let $\sett{e_k}_k$ be an orthonormal basis of $\SLt$. Since by (a) $\multm$ is a positive operator,
it suffices to check that $\sum_k \ip{\multm e_k}{e_k} \lesssim \norm{m}_1$ (see for example,
\cite[Theorem 2.14]{si79}). To this end, note that for $x \in \Gc$,
\begin{align*}
\sum_k \abs{e_k(x)}^2 = \sum_k \abs{\ip{e_k}{E_x}}^2 = \norm{E_x}^2 \leq \norm{\iv{\kernelenv}}_2^2.
\end{align*}
Hence,
\begin{align*}
\sum_k \ip{\multm e_k}{e_k} &= \sum_k \int_\Gc m(x) \abs{e_k(x)}^2 \leq \norm{\iv{\kernelenv}}_2^2 \norm{m}_1.
\end{align*}
This completes the proof.
\end{proof}
\section{Well-spread families of operators}
\label{sec_abs_mol}
Throughout  this section, we assume (A1) and (A2) from Section~\ref{sec_model}. Recall the notion of phase-space
multiplier $M_m:\SEsp\to\SEsp, \, M_m(f) = P(mf)$, from Section \ref{sec_phmul}. If $P:\Esp \to \SEsp$ is the
projector from  (A2) and $m \in L^\infty(\Gc)$ is a symbol, then (A1), (A2) intuitively imply that a phase-space
multiplier spreads the mass of $f$ in a controlled way. Indeed, using the bound in (A2) we obtain 
\begin{align}
\label{eq_well_bound}
\abs{M_m f(x)} = \abs{P(mf)(x)}
\leq \int_\Gc \abs{m(y)} \abs{f(y)} K(y^{-1}x) dy, \qquad x \in \Gc.
\end{align}
Hence, if $m$ is known to be concentrated in a certain region of $\Gc$, so will be $M_m(f)$.

One of the main technical insights of this article is the fact that some important tools used in the investigation of 
families phase-space multipliers only depend on estimates
such as  \eqref{eq_well_bound}. To formalize this observation, we now introduce the
key concept of families of operators that are {\em well-spread} in phase-space,  implying that the operators are
dominated by product-convolution operators
 centered at suitably distributed
nodes $\gamma$.
\begin{definition}[Well-spread family of operators]\label{Def:wellspread}
Let $\Gamma \subseteq \Gc$ be a relatively separated set,  $\env \in W(L^\infty,L^1_w) \cap W_R(L^\infty,L^1_w)$
and $g \in \winr$ be non-negative functions. A family of operators
$\sett{\molg: \SEsp \to \SEsp: \gamma \in \Gamma}$ is
called \emph{well-spread with envelope} $(\Gamma, \env, g)$ if the following pointwise estimate holds
\begin{align}
\label{eq_deff_loc_mol}
\abs{\molg f(x)} \leq \int_{\Gc} g(\gamma^{-1}y) \abs{f(y)} \env(y^{-1}x) dy,
\qquad \gamma \in \Gamma, x \in \Gc.
\end{align}\end{definition}
If we do not want to emphasize the role of the envelope,  we say that
$\sett{\molg: \gamma \in \Gamma}$ is a well-spread family of operators, assuming the existence of an adequate
envelope.

The advantage of working with well-spread families instead of just families of phase-space multipliers
is that well-spreadness  is stable under various  operations, e.g. finite composition. This will be essential to the
proofs of our main results.

The canonical example of a well-spread family of operators is a family of phase-space multipliers
$\set{\multmg}{\gamma \in \Gamma}$ associated with an adequate family of symbols.
\begin{definition}[Well-spread family of symbols]\label{eq:etaupbound}
A family of (measurable) symbols $\set{\eta_\gamma:\Gc \to \bC}{\gamma \in \Gamma}$ is called \emph{well-spread} 
if 
\begin{itemize}
\item $\Gamma \subseteq \Gc$ is a relatively separated set and
\item there is a function $g \in \wright$ such that $
\abs{\eta_\gamma(x)} \leq g(\gamma^{-1}x)$, $x \in \Gc$, $\gamma \in \Gamma.
$
\end{itemize}
The pair $(\Gamma, g)$ is called an envelope for $\sett{\eta_\gamma: \gamma \in \Gamma}$.
\end{definition}
Together with the properties of the convolution kernel $K$ dominating the projection onto $\SEsp$
- cf. (A1), (A2) - we immediately obtain a family of well-spread operators.
\begin{prop}
\label{prop_covers_are_molecules}
Let $\sett{\eta_\gamma: \gamma \in \Gamma}$ be a well-spread family of symbols. Then the corresponding family of
phase-space multipliers
$\sett{\multmg: \gamma \in \Gamma}$ is well-spread.
\end{prop}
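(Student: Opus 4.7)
The plan is to directly verify the pointwise estimate \eqref{eq_deff_loc_mol} in Definition \ref{Def:wellspread} for the family $\{M_{\eta_\gamma} : \gamma \in \Gamma\}$. The strategy is simply to stitch together two pointwise inequalities: the one coming from the dominating kernel of the projector $P$ in Assumption (A2), and the envelope bound $|\eta_\gamma(y)| \le g(\gamma^{-1}y)$ coming from the well-spreadness hypothesis on the symbols.

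First, I would unfold the definition $M_{\eta_\gamma}(f) = P(\eta_\gamma f)$. To apply (A2) via \eqref{eq_P_dominated}, I need $\eta_\gamma f \in W(L^1, L^\infty_{1/w})$; this should be straightforward from $\eta_\gamma \in L^\infty$ (being bounded by $g$, which lies in $W_R(L^\infty, L^1_w) \subseteq L^\infty_{loc}$) and from $f \in \SEsp \hookrightarrow W(L^\infty, \Esp)$ via Proposition \ref{prop_P_into_am}(b), so that a local integrability/boundedness check handles the membership. Then applying \eqref{eq_P_dominated} pointwise gives, for every $x \in \Gc$,
\begin{align*}
|M_{\eta_\gamma} f(x)| = |P(\eta_\gamma f)(x)| \le \int_\Gc |\eta_\gamma(y)|\, |f(y)|\, \kernelenv(y^{-1}x)\, dy.
\end{align*}

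Next, I would insert the envelope estimate on the symbols, $|\eta_\gamma(y)| \le g(\gamma^{-1}y)$, which is the defining property of a well-spread family of symbols. This yields
\begin{align*}
|M_{\eta_\gamma} f(x)| \le \int_\Gc g(\gamma^{-1}y)\, |f(y)|\, \kernelenv(y^{-1}x)\, dy,
\qquad \gamma \in \Gamma,\ x \in \Gc,
\end{align*}
which is precisely \eqref{eq_deff_loc_mol} with envelope triple $(\Gamma, \kernelenv, g)$. Finally, one verifies that the envelope data satisfy the hypotheses of Definition \ref{Def:wellspread}: $\Gamma$ is relatively separated by assumption, $\kernelenv \in W(L^\infty,L^1_w)\cap W_R(L^\infty,L^1_w)$ by (A2), and $g \in W_R(L^\infty,L^1_w)$ by the definition of well-spread symbols.

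There is no real obstacle here: the proposition is essentially a bookkeeping statement that records the compatibility between the kernel-dominated projector $P$ and the envelope structure of the symbols. The only point that requires a moment's thought is the mild domain check needed before invoking \eqref{eq_P_dominated}, which is handled once one recalls from Proposition \ref{prop_P_into_am}(b) that elements of $\SEsp$ lie in $W(L^\infty, \Esp)$ and are therefore tame enough for the pointwise bound to apply.
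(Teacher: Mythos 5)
Your proposal is correct and matches the paper's argument: the paper's one-line proof also records that $(\Gamma,\kernelenv,g)$ is an envelope for $\sett{\multmg:\gamma\in\Gamma}$, using exactly the chain of estimates you wrote (which is the bound \eqref{eq_well_bound} combined with $\abs{\eta_\gamma(y)}\leq g(\gamma^{-1}y)$). Your extra domain check via Proposition \ref{prop_P_into_am}(b) is harmless added bookkeeping, not a different route.
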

\begin{proof}
It follows readily from the definitions and Assumptions (A1), (A2) that if $(\Gamma, g)$ is an envelope for 
$\sett{\eta_\gamma: \gamma \in \Gamma}$, then $(\Gamma, \kernelenv, g)$ is an envelope for
$\sett{\multmg: \gamma \in \Gamma}$.
\end{proof}
The reason why we introduce the concept of well-spread families of operators is that composition of phase-space
multipliers usually fails to yield a phase-space multiplier. However, the estimate in \eqref{eq_deff_loc_mol}
is stable under various operations. In this article we will be mainly interested in finite composition and we have the following.
\begin{prop}
\label{prop_squared_covers_are_molecules}
Let $\sett{\eta_\gamma: \gamma \in \Gamma}$ be a well-spread family of symbols. Then the family of
operators $\sett{(\multmg)^2: \gamma \in \Gamma}$ is well-spread.
\end{prop}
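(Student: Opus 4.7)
The plan is to iterate the pointwise estimate provided by well-spreadness and repackage the resulting double integral so that the inner variable can be absorbed into a new envelope via the convolution algebra property of the amalgam spaces. First, by Proposition~\ref{prop_covers_are_molecules}, the family $\{\multmg\}$ is itself well-spread with envelope $(\Gamma, \kernelenv, g)$, i.e.\ $|\multmg f(x)| \leq \int_\Gc g(\gamma^{-1} y) |f(y)| \kernelenv(y^{-1}x)\, dy$. Applying this bound twice to $(\multmg)^2 f = \multmg(\multmg f)$ and exchanging the order of integration via Fubini--Tonelli gives
\[
|(\multmg)^2 f(x)| \leq \int_\Gc g(\gamma^{-1} y) |f(y)| F_\gamma(y, x)\, dy, \qquad F_\gamma(y,x) := \int_\Gc g(\gamma^{-1} z) \kernelenv(y^{-1}z) \kernelenv(z^{-1}x)\, dz.
\]

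The key step is to bound $F_\gamma(y,x)$ uniformly in $\gamma, y$ by a fixed function $\Theta(y^{-1}x)$ with $\Theta \in W(L^\infty, L^1_w) \cap W_R(L^\infty, L^1_w)$. Substituting $s = y^{-1} z$ and using left-invariance of Haar measure, I would rewrite $F_\gamma(y,x) = \int_\Gc g(\gamma^{-1} y s) \kernelenv(s) \kernelenv(s^{-1} y^{-1}x)\, ds$, which has the form of a weighted group convolution evaluated at $y^{-1} x$. The trivial pointwise bound $g(\gamma^{-1} y s) \leq \|g\|_\infty$ then yields $F_\gamma(y,x) \leq \|g\|_\infty \,(\kernelenv \ast \kernelenv)(y^{-1} x)$. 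Proposition~\ref{prop_conv_algebra} guarantees that $\kernelenv \ast \kernelenv$ lies in both $W(L^\infty, L^1_w)$ and $W_R(L^\infty, L^1_w)$, so $\Theta := \|g\|_\infty (\kernelenv \ast \kernelenv)$ is an admissible envelope function.

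Combining the two estimates produces the desired pointwise bound $|(\multmg)^2 f(x)| \leq \int_\Gc g(\gamma^{-1}y) |f(y)| \Theta(y^{-1}x)\, dy$, proving that $\{(\multmg)^2 : \gamma\in\Gamma\}$ is well-spread with envelope $(\Gamma, \Theta, g)$. The main obstacle lies in the uniformity of the bound on $F_\gamma$: the $\|g\|_\infty$-estimate is clean when $g$ is essentially bounded, which is automatic in all concrete examples of interest (the projection kernel $\kernelenv$ and the symbol envelope $g$ coming from Schwartz windows are both in $L^\infty$). In the fully general case, where $g \in W_R(L^\infty, L^1_w)$ is merely locally bounded, one would replace the trivial $\|g\|_\infty$-bound by a local-supremum estimate involving $g_\#$, absorbing any unboundedness into a possibly enlarged but still admissible envelope $g'$ in $W_R$; the overall structure (iterate, Fubini, convolution algebra) remains the same.
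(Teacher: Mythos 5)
Your argument is exactly the paper's proof: iterate the envelope estimate for $\multmg$, bound the $g$-factor at the intermediate variable by $\norm{g}_\infty$, recognize the remaining inner integral as $\kernelenv * \kernelenv$, and invoke Proposition~\ref{prop_conv_algebra} to get the envelope $(\Gamma, \kernelenv*\kernelenv, \norm{g}_\infty g)$. Your closing worry about unbounded $g$ is moot, since for an admissible weight $w$ one has $\wright \into L^\infty$ (and the paper freely uses $\norm{g}_\infty$ in the same way), so the proof is complete as written.
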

\begin{proof}
If $(\Gamma, g)$ is an envelope for $\sett{\eta_\gamma: \gamma \in \Gamma}$ then
\begin{align*}
\abs{(\multmg)^2 f(x)} &\leq \int_\Gc g(\gamma^{-1} y) \abs{\multmg f(y)} \kernelenv(y^{-1}x) dy
\\
&\leq
\int_\Gc \int_\Gc g(\gamma^{-1} y) g(\gamma^{-1} z) \abs{f(z)} \kernelenv(z^{-1}y) \kernelenv(y^{-1}x) dy dz
\\
&\leq \norm{g}_\infty
\int_\Gc  g(\gamma^{-1} z) \abs{f(z)} (\kernelenv* \kernelenv )(z^{-1}x) dz.
\end{align*}
Hence, if we set $\env := \kernelenv*\kernelenv$, it follows that $(\Gamma, \env, \norm{g}_\infty g)$ is an
envelope for $\sett{(\multmg)^2: \gamma \in \Gamma}$. Since $\kernelenv$ belongs to $W(L^\infty,L^1_w)
\cap W_R(L^\infty,L^1_w)$, by Proposition \ref{prop_conv_algebra}, so does $\env$.
\end{proof}
\subsection{Almost-orthogonality estimates}
We now introduce one of the key estimates used in this article,
that can be seen as a generalization of \eqref{eq_intro_norm_equiv}. 
\begin{theorem}
\label{th_almost_orth}
Let $\sett{\molg: \gamma \in \Gamma}$ be a well-spread family of operators. Suppose that the operator
$\sum_\gamma \molg: \SEsp \to \SEsp$ is invertible. Then the following norm equivalence holds
\begin{align}
\label{eq_almost_orth}
\norm{f}_\Esp
\approx
\norm{( \norm{\molg f}_{L^2(\Gc)} )_{\gamma \in \Gamma}}_{\Espd}, \qquad f \in \SEsp.
\end{align}
\end{theorem}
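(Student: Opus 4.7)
The statement generalizes the almost-orthogonality bound \eqref{eq_intro_norm_equiv}, previously established in \cite{ro12} for families of phase-space multipliers, to arbitrary well-spread families of operators. The plan is to prove the two halves of \eqref{eq_almost_orth} separately, exploiting only the pointwise envelope \eqref{eq_deff_loc_mol} (plus invertibility for the lower bound).

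\emph{Upper bound.} The inequality $\bignorm{(\norm{\molg f}_{L^2(\Gc)})_{\gamma}}_{\Espd} \lesssim \norm{f}_\Esp$ does not use invertibility. From \eqref{eq_deff_loc_mol}, the right-hand side is the convolution $(g(\gamma^{-1}\pointdot)\abs{f})*\env$ evaluated at $x$; since $\env \in L^1_w(\Gc)$, a Young-type estimate yields control of $\norm{\molg f}_{L^2}$ by a local $L^2$-mass of $f$ weighted by $g(\gamma^{-1}\pointdot)$. Assembling over $\gamma$, the function $\sum_\gamma \norm{\molg f}_{L^2} 1_{\gamma V}$ is then pointwise dominated by $\abs{f} * \kappa$ for some $\kappa \in L^1_w(\Gc)$ built from $g$, $\env$ and the spreadness $\rel(\Gamma)$, and the convolution axiom \eqref{conv_mod_assumption} closes the estimate.

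\emph{Lower bound.} To show $\norm{f}_\Esp \lesssim \bignorm{(\norm{\molg f}_{L^2})_{\gamma}}_{\Espd}$, the invertibility of $T := \sum_\gamma \molg$ on $\SEsp$ reduces matters to bounding $\bignorm{\sum_\gamma \molg f}_\Esp$ by the right-hand side. Each $\molg f$ belongs to $\SEsp \hookrightarrow W(L^\infty,\Esp)$ by Proposition \ref{prop_P_into_am}(b), and is essentially concentrated near $\gamma$: combining the envelope \eqref{eq_deff_loc_mol} with the reproducing representation $\molg f = P(\molg f)$ and the decay of the projection kernel $\kernelenv$ should produce a pointwise domination of $\abs{\molg f(x)}$ by $\norm{\molg f}_{L^2}$ times a $\gamma$-centered $L^1_w$-envelope. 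Summing such bounds converts $\norm{\sum_\gamma \molg f}_\Esp$ into the $\Esp$-norm of a convolution of $\sum_\gamma \norm{\molg f}_{L^2} 1_{\gamma V}$ with a fixed kernel in $L^1_w(\Gc)$, and \eqref{conv_mod_assumption} closes the estimate.

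The principal technical obstacle is the lower bound, and specifically the step producing the pointwise domination of $\abs{\molg f(x)}$ by $\norm{\molg f}_{L^2}$ with genuine $\gamma$-localization: the envelope \eqref{eq_deff_loc_mol} naively controls $\abs{\molg f(x)}$ in terms of $\abs{f}$ rather than in terms of $\molg f$ itself, and reconciling these two requires a delicate interplay between $\env$, $g$ and the reproducing kernel $\kernelenv$. This is precisely the juncture at which the almost-orthogonality framework of \cite{ro12}, originally formulated for phase-space multipliers, transfers verbatim to our setting, since its internal arguments depend only on the pointwise envelope \eqref{eq_deff_loc_mol} and on the general Wiener-amalgam estimates for $P$ collected in Proposition \ref{prop_P_into_am}.
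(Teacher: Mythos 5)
Your upper bound is fine and matches the paper: the envelope \eqref{eq_deff_loc_mol} gives $\abs{\molg f(x)} \leq \bigl((\abs{f}\,L_\gamma g) * \env\bigr)(x)$, Young's inequality $L^1 * L^2 \into L^2$ gives $\norm{\molg f}_{L^2(\Gc)} \lesssim \norm{\env}_{W(L^\infty,L^1_w)} \int_\Gc \abs{f(y)}\,g(\gamma^{-1}y)\,dy$, and the sampling estimate of Lemma \ref{lemma_amfact} then puts the resulting sequence in $\Espd$ (this is Proposition \ref{prop_vector_synt}(a)). The lower bound, however, has a genuine gap at exactly the step you flag as the obstacle. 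The pointwise domination you propose, $\abs{\molg f(x)} \lesssim \norm{\molg f}_{L^2(\Gc)}\, h(\gamma^{-1}x)$ with a fixed $h$ of $L^1_w$-type decay, is false for general well-spread families: the envelope \eqref{eq_deff_loc_mol} bounds $\molg f$ by the local size of the \emph{input} $f$ near $\gamma$, spread by $\env$, and this absolute bound cannot be converted into a bound relative to $\norm{\molg f}_{L^2}$. For instance, a rank-one operator $\molg f = \ip{f}{L_\gamma g}\, v$ with $v \in \SEsp$ a bump centered at a point $z_0$ far from $\gamma$ and scaled small enough that $\abs{v(x)} \leq \env(y^{-1}x)$ for $y$ near $\gamma$ satisfies \eqref{eq_deff_loc_mol}, yet $\abs{\molg f(z_0)} \approx \norm{\molg f}_{L^2}\,\abs{v(z_0)}/\norm{v}_2$, which exhibits no decay in $\gamma^{-1}z_0$. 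Combining \eqref{eq_deff_loc_mol} with the reproducing bound \eqref{eq_P_dominated} only yields $\abs{\molg f(x)} \lesssim \norm{\molg f}_{L^2}\norm{\iv{\kernelenv}}_2$, uniformly in $x$, without $\gamma$-localization; no interplay of $\env$, $g$ and $\kernelenv$ repairs this. Relatedly, your preliminary reduction aims at the inequality $\norm{\sum_\gamma \molg f}_\Esp \lesssim \norm{(\norm{\molg f}_{L^2})_\gamma}_{\Espd}$, which is not what the machinery of \cite{ro12} delivers and is not available without an extra error term.

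What the paper actually does is structurally different at this point. It introduces the synthesis operator $\recv\bigl((f_\gamma)_\gamma\bigr) = \sum_\gamma P(f_\gamma) 1_{\gamma U}$, so that the $\gamma$-localization is imposed by the explicit cutoffs $1_{\gamma U}$ rather than extracted from $\molg f$ itself; Proposition \ref{prop_vector_synt}(b) gives $\norm{\recv \coeft f}_\Esp \lesssim \norm{\coeft f}_{\EspdLt}$. The key input from \cite{ro12} is the approximation Theorem \ref{th_approx}: for $U$ large, $\norm{\sum_\gamma \molg f - \recv\coeft f}_\Esp \leq \varepsilon \norm{f}_\Esp$, i.e.\ the discarded tails are small in \emph{absolute} terms, measured against $\norm{f}_\Esp$, not against the coefficient data. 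One then composes with $P$ and uses the invertibility of $\sum_\gamma \molg$ in a Neumann-series perturbation argument: $P\recv\coeft$ is within $C\varepsilon$ of $\sum_\gamma \molg$ in operator norm on $\SEsp$, hence invertible for suitable $U$, and then $\norm{f}_\Esp \approx \norm{P\recv\coeft f}_\Esp \lesssim \norm{\coeft f}_{\EspdLt}$. This absorption of the $\varepsilon\norm{f}_\Esp$ error via invertibility is the mechanism your outline is missing; your broader observation that the arguments of \cite{ro12} depend only on the envelope \eqref{eq_deff_loc_mol} is indeed the paper's point, but to make it a proof you must reproduce this truncation--approximation--absorption structure rather than the (unavailable) relative pointwise localization of $\molg f$.
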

\begin{rem}
The implicit constants depend only on $\norm{\kernelenv}_{W(L^\infty,L^1_w)}$,
$\norm{\kernelenv}_{W_R(L^\infty,L^1_w)}$,
$\norm{\env}_{W(L^\infty,L^1_w)}$,
$\norm{\env}_{W_R(L^\infty,L^1_w)}$,
$\norm{g}_{\wright}$,
$\rel(\Gamma)$ (cf. \eqref{eq_spread})
and $\consEw$ (cf. \eqref{weight_w_admissible}),
and the norms of $\sum_\gamma \molg$ and $(\sum_\gamma \molg)^{-1}$.
\end{rem}
Theorem~\ref{th_almost_orth}, in its version for families of phase-space multipliers associated with
well-spread families of symbols, is proved in \cite{ro12}.  However, the arguments involved work unaltered for the
case of general well-spread families of operators. Indeed, Definition \ref{Def:wellspread} is tailored to the
requirements of the proof in \cite{ro12}. For completeness, we sketch a proof of Theorem \ref{th_almost_orth} in
Appendix A.

Theorem \ref{th_almost_orth} is an almost orthogonality principle: because of the phase-space localization of the
family $\sett{\molg: \gamma \in \Gamma}$, the effects of each individual operator within the sum $\sum_\gamma \molg$
decouple.

We also point out that the $L^2$-norm in \eqref{eq_almost_orth} can be replaced by any solid
translation invariant norm (see \cite{ro12}). However, the by far most important case is the one of
$L^2$. Indeed, in this article we exploit \eqref{eq_almost_orth} to extrapolate certain 
thresholding estimates from $L^2$ to other Banach spaces, cf.~Theorem~\ref{th_almost_orth_tf}.

As a consequence of Theorem~\ref{th_almost_orth} we get the following fact.
\begin{coro}
\label{coro_squares}
Let $\sett{\molg: \gamma \in \Gamma}$ be a well-spread family of operators. Suppose that $\Esp=L^2(\Gc)$ and that
the operator $\sum_\gamma \molg: \SLt \to \SLt$ is invertible. Then so is the operator $\sum_\gamma \admolg \molg: \SLt
\to
\SLt$.
\end{coro}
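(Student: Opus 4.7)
The plan is to use Theorem \ref{th_almost_orth} to turn the invertibility of $\sum_\gamma \molg$ into a two-sided bound for the quadratic form of $\sum_\gamma \admolg \molg$ on $\SLt$. Since the latter operator is positive, a two-sided bound will immediately yield its invertibility on the Hilbert space $\SLt$.

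More precisely, Theorem \ref{th_almost_orth} (applied with $\Esp = \Ltsp(\Gc)$, so $\Espd = \ell^2(\Gamma)$) gives constants $0 < A \leq B < +\infty$ such that
\begin{align*}
A \norm{f}_2^2 \;\leq\; \sum_{\gamma \in \Gamma} \norm{\molg f}_2^2 \;\leq\; B \norm{f}_2^2, \qquad f \in \SLt.
\end{align*}
The upper bound means that the analysis map $\Phi: \SLt \to \ell^2(\Gamma; \Ltsp(\Gc))$ defined by $\Phi(f) := (\molg f)_{\gamma \in \Gamma}$ is bounded with $\norm{\Phi}^2 \leq B$. Its adjoint $\Phi^*: \ell^2(\Gamma; \Ltsp(\Gc)) \to \SLt$ is given by $\Phi^*((h_\gamma)_\gamma) = \sum_\gamma \admolg h_\gamma$ (convergence in $\SLt$), and the composition $\Phi^*\Phi$ is therefore a bounded operator on $\SLt$ which we may legitimately denote by $\sum_\gamma \admolg \molg$. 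It is self-adjoint and positive by construction, and satisfies
\begin{align*}
\ip{\textstyle\sum_\gamma \admolg \molg f}{f} = \sum_{\gamma \in \Gamma} \norm{\molg f}_2^2.
\end{align*}

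Combining the displays, $A \norm{f}_2^2 \leq \ip{\sum_\gamma \admolg \molg f}{f} \leq B \norm{f}_2^2$ for all $f \in \SLt$, i.e.\ $A \, I \leq \sum_\gamma \admolg \molg \leq B \, I$ on $\SLt$. A positive bounded self-adjoint operator bounded below by a positive multiple of the identity has spectrum contained in $[A,B] \subseteq (0,+\infty)$, hence it is invertible, which is the claim.

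The only subtlety is making sense of the infinite sum $\sum_\gamma \admolg \molg$ as a bounded operator on $\SLt$ (the family $\Gamma$ may be countably infinite). This is not an obstacle: it is handled by the standard $\Phi^*\Phi$ construction above, which relies exclusively on the upper bound provided by Theorem \ref{th_almost_orth}. Note that we do not need $\sum_\gamma \admolg \molg$ itself to be a well-spread family or a phase-space multiplier; the Hilbert space structure together with positivity is enough to pass from the two-sided quadratic-form bound to invertibility.
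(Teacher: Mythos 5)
Your proof is correct and is essentially the paper's own argument: both deduce from Theorem \ref{th_almost_orth} that $\ip{\sum_\gamma \admolg \molg f}{f} = \sum_\gamma \norm{\molg f}_2^2 \approx \norm{f}_2^2$ on $\SLt$, so the operator is positive definite and hence invertible. Your additional care in realizing the sum as $\Phi^*\Phi$ to justify boundedness is a reasonable elaboration of a point the paper leaves implicit, not a different approach.
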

\begin{proof}
By Theorem \ref{th_almost_orth}, the invertibility of $\sum_\gamma \molg$ implies that for $f \in \SLt$,
\begin{align*}
\norm{f}_2^2 \approx
\sum_{\gamma\in\Gamma} \norm{\molg f}_2 ^2 = \ip{\sum_{\gamma\in\Gamma} \admolg \molg f}{f}.
\end{align*}
Hence $\sum_\gamma \admolg \molg$ is positive definite and therefore invertible on $\SLt$.
\end{proof}
\begin{rem}
Consider a well-spread family of self-adjoint operators
$\sett{\molg: \gamma \in \Gamma}$. Since
$(\sum_\gamma \molg)^2=\sum_\gamma (\molg)^2 + \sum_{\gamma, \gamma' / \gamma \not= \gamma'} \molg \molgp$
and $(\sum_\gamma \molg)^2$ is invertible if and only if $\sum_\gamma \molg$ is, Corollary \ref{coro_squares}
says that the invertibility of $(\sum_\gamma \molg)^2$ implies that of its ``diagonal part'' $\sum_\gamma (\molg)^2$.
This gives further support to the interpretation of Theorem \ref{th_almost_orth} as an almost orthogonality principle.
\end{rem}

\section{The case of Time-Frequency analysis}
\label{sec_tf}
In this section we show how time-frequency analysis fits the abstract model from Section \ref{sec_abstract}
and also illustrate the notions from Section \ref{sec_abs_mol} in this particular case. We first introduce the 
relevant class of function spaces.

\subsection{Modulation spaces}
\label{sec_mod_sp}
Let $\varphi \in \mathcal{S}(\Rdst)$ be a non-zero function normalized by $\norm{\varphi}_2=1$ and recall that the
STFT of a distribution $f \in \mathcal{S}'(\Rdst)$ is the function
$V_\varphi f: \Rdst \times \Rdst \to \bC$ given by
\begin{align*}
\Vstft_{\varphi}f(z)
=\int_{\mathbb{R}^d}f(t)\overline{\varphi(t-x)}e^{-2\pi i\xi t}dt,
\quad z=(x,\xi) \in \Rdst \times \Rdst.
\end{align*}
Modulation spaces are defined by imposing weighted $L^p$-norms on the STFT, as we now describe.
As weight we consider continuous functions $v: \Rtdst \to (0,+\infty)$ such that $v(z+z') \leq C v(z)(1+\abs{z'})^N$,
for some constants $N,C>0$ and all $z,z' \in \Rtdst$. Such functions are called \emph{polynomially moderated weights}.
For a polynomially moderated weight $v$ and $p \in [1,+\infty]$, the \emph{modulation space} $M^p_v(\Rdst)$ is
\begin{align}
M^p_v(\Rdst) := \set{f \in \mathcal{S}'(\Rdst)}{V_\varphi f \in L^p_v(\Rtdst)}.
\end{align}
The space $M^p_v(\Rdst)$ is given the norm
\begin{align}
\norm{f}_{M^p_v(\Rdst)} := \norm{V_\varphi f}_{L^p_v(\Rtdst)}=
\left(\int_{\Rtdst} \abs{f(z)}^p v(z)^p dz\right)^{1/p},
\end{align}
with the usual modification for $p=+\infty$. A different choice of $\varphi$ yields the same space
with an equivalent norm, $M^p_v(\Rdst)$ is a Banach space,
and $V_\varphi: M^p_v(\Rdst) \to V_\varphi(M^p_v(\Rdst))$ defines an isometric isomorphism,
where $V_\varphi(M^p_v(\Rdst))$ is considered as a subspace of $L^p_v(\Rtdst)$
(see ~\cite[Proposition 4.3]{fegr89} or \cite[Chapter 11]{gr01}).
The unweighted modulation space $M^2(\Rdst)$ coincides with $L^2(\Rdst)$.

The assumption that $\varphi$ is a Schwartz function can be relaxed by considering 
the following classes of weights.
\begin{definition}[Admissible TF weights]
\label{def_tf_weights}
A continuous even function $w: \Rtdst \to (0,+\infty)$ is called an \emph{admissible TF weight} if it satisfies the
following.
\begin{itemize}
\item[(i)] $w$ is submultiplicative, i.e. $w(z+z') \leq w(z)w(z')$, for all $z,z' \in \Rtdst$.
\item[(ii)] $w(z) \leq C(1+\abs{z})^N$, for some constants $C,N>0$ and all $z  \in \Rtdst$.
\item[(iii)] $w$ satisfies the following condition - known as the GRS-condition after Gelfand, Raikov and Shilov
\cite{gerash64}
\begin{align*}
\lim_{n \rightarrow \infty} w(nz)^{1/n}=1,
\quad z \in \Rtdst.
\end{align*}
\end{itemize}
\end{definition}
The polynomial weights $w(z) = (1+\abs{z})^s$ with $s \geq 0$ are
examples of admissible TF weights. 
A second weight $v: \Rtdst \to (0,+\infty)$ is said to be $w$-moderated if
\begin{align}
\label{eq_v_moderated}
v(z+z') \leq C_v w(z) v(z'),
\quad z,z' \in \Rtdst,
\end{align}
for some constant $C_v$. Note that in this case $v$ is polynomially moderated because $w$ is dominated by a polynomial.
(Do not confuse the notion of admissible TF weight with the one of being a weight
\emph{admissible for a certain BF space} - cf. Definition \ref{def_adm_for}.)

Let $w$ be an admissible TF weight and let $\varphi \in M^1_w(\Rdst)$ be non-zero. For every $w$-moderated weight $v$
and $p \in [1,+\infty]$ the modulation space $M^p_v(\Rdst)$ can be described as
\begin{align}
M^p_v(\Rdst) = \set{f \in M^\infty_{1/w}(\Rdst)}{V_\varphi f \in L^p_v(\Rtdst)},
\end{align}
and $\norm{V_\varphi f}_{L^p_v(\Rtdst)}$ is an equivalent norm on it (see \cite[Chapter 11]{gr01}). Hence,
the assumption: $\varphi \in M^1_w(\Rdst)$ is enough to treat the class of all modulation spaces
$M^p_v$, with $v$ a $w$-moderated weight and $p \in [1,+\infty]$. (The GRS-condition above does not play a role in what
was presented so far; it is required for the technical results used in Sections \ref{sec_wiener} and
\ref{sec_tf_spread}.)

\subsection{Phase-space and modulation spaces}
\label{sec_fs_and_mod}
Extending Example \ref{ex_tf}, in this section we explain how modulation spaces fit the model of Section
\ref{sec_abstract}.

Let $w$ be an admissible TF weight, $\varphi \in M^1_w(\Rdst)$ normalized by $\norm{\varphi}_2=1$, and $v$
a $w$-moderated weight. To apply the model we let $\mathcal{G} = \mathbb{R}^{2d}$ with the usual group structure and
$\Esp := L^p_v(\Rtdst)$. Since $v$ is $w$-moderated, we deduce readily that $w$ is admissible for $\Esp$.

We also let $\SEsp := V_\varphi M^p_v(\Rdst)$. The fact that $\SEsp$ is closed within $L^p_v(\Rtdst)$ follows from the
fact that $V_\varphi: M^p_v(\Rdst) \to \SEsp$ is an isometric isomorphism
and $M^p_v(\Rdst)$ is complete. As projection $P: L^p_w(\Rtdst) \to V_\varphi
M^p_v(\Rdst)$ we consider the operator composition of $V_\varphi$ with its formal adjoint: $P=V_\varphi V_\varphi^*$. As
discussed in Section \ref{sec_bla_1} and Example \ref{ex_tf}, this operator is well-defined and satisfies
\begin{align*}
\abs{P F(x',\xi')} \leq
\int_{\Rdst \times \Rdst} \abs{F(x,\xi)} \abs{V_\varphi \varphi(x'-x,\xi'-\xi)} dx d\xi,
\qquad F \in L^p_v(\Rtdst).
\end{align*}
The fact that $\varphi \in M^1_w(\Rdst)$ in principle means that $V_\varphi \varphi \in L^1_w(\Rtdst)$,
but it also follows that $V_\varphi \varphi \in \wrtd$ - see for example \cite[Proposition 12.1.11]{gr01}.

Hence, (A2)-(A3) are satisfied with $K:=\abs{V_\varphi \varphi}$. (Note that since $\Gc=\Rtdst$ is
abelian, the left and right amalgam spaces coincide.)

Recall the time-frequency localization operator with symbol $m \in L^\infty(\Rtdst)$
formally given by $H_m f = V^*_\varphi (m V_g f)$, cf.~\eqref{eq_loc_op_2}.
In Section \ref{sec_bla_1} we discussed the operator $H_m: \LtRd \to \LtRd$. 
Analogous considerations apply to $H_m: M^p_v(\Rdst) \to M^p_v(\Rdst)$ and show that
$V_\varphi H_m V^*_\varphi F = P ( m F)$ for all $ F \in V_\varphi(M^p_v(\Rdst))$. 
This means that under the isometric isomorphism $V_\varphi: M^p_v(\Rdst) \to V_\varphi(M^p_v(\Rdst))$,
the time-frequency localization operator $H_m$ corresponds to phase-space multiplier $M_m$ from Section \ref{sec_phmul}.
\begin{rem}
In this article we will be concerned with time-frequency localization operators with bounded symbols $m$.
However the condition that $m$ be bounded is by no means necessary for $H_m:M^p_v(\Rdst) \to M^p_v(\Rdst)$ to
be bounded. See for example \cite{feno03,cogr03} for sharper boundedness results for time-frequency localization
operators.
\end{rem}

\subsection{Spectral invariance}
\label{sec_wiener}
We present a technical result that says that for a certain class of operators, the property of being invertible
on $L^2(\Rdst)$ automatically implies invertibility on a range of modulation spaces. The following proposition
is obtained by combining known spectral invariance results for pseudodifferential
operators \cite{sj94,sj95,gr06,gr06-3}. For convenience of the reader, in Appendix B we provide
the necessary background results and give a proof based on those results.
\begin{prop}
\label{prop_wiener}
Let $w:\Rtdst \to (0,+\infty)$ be an admissible TF weight - cf. Section \ref{sec_mod_sp} -
and let $\varphi \in M^1_w(\Rdst)$ be non-zero.
Let $v$ be a $w$-moderated weight and let $p \in [1,+\infty]$. Let $T:M^p_v(\Rdst) \to M^p_v(\Rdst)$
be an operator satisfying the enveloping condition
\begin{align}
\label{eq_wiener_env}
\abs{V_\varphi T(f)(z)} \leq \int_{\Rtdst} \abs{V_\varphi f(z')} H(z-z') dz',
\qquad z \in \Rtdst,
\end{align}
for some function $H \in L^1_w(\Rtdst)$. Assume that $T:L^2(\Rdst) \to L^2(\Rdst)$ is invertible.
Then $T: M^p_v(\Rdst) \to M^p_v(\Rdst)$ is invertible.
\end{prop}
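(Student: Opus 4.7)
The plan is to translate the enveloping hypothesis \eqref{eq_wiener_env} into a statement placing the (Weyl or Kohn--Nirenberg) symbol of $T$ in the Sj\"ostrand-type modulation space $M^{\infty,1}_w(\Rtdst)$, and then to invoke the Wiener-type spectral invariance theorems for pseudodifferential operators with such symbols from \cite{sj94,sj95,gr06,gr06-3}. These results assert that a Weyl operator whose symbol lies in $M^{\infty,1}_w$ and which is invertible on $L^2(\Rdst)$ remains invertible on every $M^p_v(\Rdst)$ with $v$ a $w$-moderated weight and $p\in[1,+\infty]$. Thus the core task is to verify the Sj\"ostrand-class membership of the symbol of $T$.

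To this end, I first would test \eqref{eq_wiener_env} against the coherent state $f = \pi(z_0)\varphi$ (a time-frequency shift of the window $\varphi$), which belongs to $M^p_v(\Rdst)$ for every $z_0\in\Rtdst$ because $\varphi\in M^1_w(\Rdst)$ and $v$ is $w$-moderated. Using the covariance identity $|V_\varphi \pi(z_0)\varphi(z')| = |V_\varphi\varphi(z'-z_0)|$ together with $V_\varphi Tf(z) = \langle Tf,\pi(z)\varphi\rangle$, the hypothesis produces the matrix coefficient estimate
\[
\bigl|\langle T\pi(z_0)\varphi,\, \pi(z)\varphi\rangle\bigr|
\;\leq\;
\bigl(|V_\varphi\varphi| \ast H\bigr)(z-z_0),
\qquad z, z_0\in\Rtdst.
\]
Since $\varphi\in M^1_w$ gives $V_\varphi\varphi\in L^1_w(\Rtdst)$ and $H\in L^1_w(\Rtdst)$, the kernel $K:=|V_\varphi\varphi|\ast H$ again belongs to $L^1_w(\Rtdst)$, as $L^1_w$ is a convolution algebra under the submultiplicativity of $w$.

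Second, I would exploit the classical identity relating the STFT of the Weyl symbol $\sigma$ of $T$ (with respect to a suitable Gaussian window $\Phi\in\mathcal{S}(\Rtdst)$) to the matrix coefficients of $T$ between time-frequency shifts of $\varphi$: up to a unimodular factor and an invertible affine change of variables, one has $V_\Phi\sigma(z,\zeta) = \langle T\pi(z_1)\varphi,\pi(z_2)\varphi\rangle$ with $z_2-z_1$ a linear function of $\zeta$ (see, e.g., \cite{gr06}). Combining this formula with the matrix-coefficient bound from the previous step yields $\sup_{z\in\Rtdst}|V_\Phi\sigma(z,\zeta)|\lesssim K(\Lambda\zeta)$ for some invertible linear map $\Lambda$, which after weighting by $w(\zeta)$ and integrating in $\zeta$ gives $\sigma\in M^{\infty,1}_w(\Rtdst)$, with symbol norm controlled by $\|H\|_{L^1_w}$.

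Finally, the Sj\"ostrand--Gr\"ochenig spectral invariance theorem for the algebra of Weyl operators with symbols in $M^{\infty,1}_w$ delivers the conclusion at once from the $L^2$-invertibility assumption; the detailed versions of these results, together with a self-contained derivation of the required $M^p_v$-invertibility, are collected in Appendix B. The main technical obstacle I expect is the bookkeeping in the second step: one must carefully track the affine substitution relating the STFT of the Weyl symbol to the matrix coefficients of $T$, and verify that the $L^1_w$-integrability of $K$ indeed translates into the mixed-norm $L^{\infty,1}_w$-integrability of $V_\Phi\sigma$ that defines membership in the Sj\"ostrand class $M^{\infty,1}_w$.
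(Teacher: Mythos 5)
Your route is essentially the paper's: testing \eqref{eq_wiener_env} on time-frequency shifts of $\varphi$ gives the Gabor-matrix bound $\abs{\ip{T\pi(z_0)\varphi}{\pi(z)\varphi}} \leq \bigl(H \ast \abs{V_\varphi\varphi}\bigr)(z-z_0)$ with $H \ast \abs{V_\varphi\varphi} \in L^1_w(\Rtdst)$, from which $T$ agrees on $\mathcal{S}(\Rdst)$ with a Weyl operator $\sigma^w$ whose symbol lies in the Sj\"ostrand-type class; the paper simply quotes this characterization (Theorem \ref{th_weyl}(iii)) instead of re-deriving the STFT-of-the-symbol identity, and records that the correct symbol class is $M^{\infty,1}_{\widetilde{w}}(\Rtdst)$ with the twisted weight $\widetilde{w}(z_1,z_2)=w(-z_2,z_1)$ rather than $M^{\infty,1}_w$ -- part of the bookkeeping you anticipate. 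Up to that, your first two steps are sound.

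The genuine gap is in your last step. The spectral invariance theorem yields that $\sigma^w: M^p_v(\Rdst) \to M^p_v(\Rdst)$ is invertible, whereas the proposition asserts invertibility of $T$; the phrase ``delivers the conclusion at once'' conceals the identification $T=\sigma^w$ as operators on all of $M^p_v(\Rdst)$, which is only known a priori on $\mathcal{S}(\Rdst)$. For $p<+\infty$ this is a routine density remark (both operators are bounded and $\mathcal{S}$ is norm dense), which you should still state. But $p=+\infty$ is included in the statement, and there $\mathcal{S}$ is \emph{not} norm dense in $M^\infty_v$, so equality on $\mathcal{S}$ plus boundedness does not suffice. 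The paper closes this by approximating $f$ by Schwartz functions in the weak* topology of $M^\infty_{1/w}(\Rdst)$, using that $\sigma^w$ is weak*-continuous (it is the adjoint of $\overline{\sigma}^{\,w}$ acting on $M^1_w$), and then invoking the enveloping condition \eqref{eq_wiener_env} a second time together with dominated convergence to get $V_\varphi T(f_k)(z) \to V_\varphi T(f)(z)$ pointwise, whence $T(f)=\sigma^w(f)$. Without some such continuity argument for $T$ in a topology in which $\mathcal{S}$ is dense, your proof does not cover the case $p=+\infty$, so you need to add this step (or restrict the claim to $p<+\infty$).
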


\subsection{Families of operators well-spread in the time-frequency plane}
\label{sec_tf_spread}
We now illustrate the notions and results from Section \ref{sec_abs_mol} in the case of time-frequency analysis. 

Let $w:\Rtdst \to (0,+\infty)$ be an admissible TF weight and $\varphi \in M^1_w(\Rdst)$ with
$\norm{\varphi}_2=1$. Setting $\mathcal{G} = \Rtdst$ in~\eqref{eq_spread}, we call a set
$\Gamma \subseteq \Rtdst$ relatively separated if
$\sup_{z \in \Rtdst} \#(\Gamma \cap B_1(z)) < +\infty$.
In analogy to Definition~\ref{eq:etaupbound}, a family of symbols $\set{\eta_\gamma:\Rtdst \to \bC}{\gamma \in \Gamma}$
is \emph{well-spread} (relative to $w$) if $\Gamma$ is relatively separated and there exists $g \in \wrtd$ such that
\begin{align*}
\abs{\eta_\gamma(z)} \leq g(z-\gamma), \qquad z \in \Rtdst, \gamma \in \Gamma.
\end{align*}
Similarly, in analogy to Definition~\ref{Def:wellspread}, a family of operators $\set{\tfmolg: \Miwp(\Rdst) \to
\Miwp(\Rdst)}{\gamma \in \Gamma}$ is said to be \emph{well-spread in the time-frequency plane} (relative to
$(\varphi,w)$) if there exists an envelope $(\Gamma, \env, g)$, with $\Gamma \subseteq \Rtdst$ a
relatively separated set and $\env, g \in \wrtd$ such that
\begin{align}
\label{eq_well_spread_tf}
\abs{\stft \tfmolg f(z)} \leq \int_{\Rtdst} g(z'-\gamma) \abs{\stft f(z')} \env(z-z') dz',
\qquad z \in \Rtdst, \gamma \in \Gamma.
\end{align}
Note that $\sett{\tfmolg: \gamma \in \Gamma}$ is well-spread in the TF plane if and only if
the family of operators $\set{\stft \tfmolg \stft^*: 
V_\varphi(M^\infty_{1/w}(\Rdst)) \to V_\varphi (M^\infty_{1/w}(\Rdst))}{\gamma \in \Gamma}$
is well-spread in the sense of Section \ref{sec_abs_mol}.

Note also that, if $\sett{\tfmolg: \gamma \in \Gamma}$ is well-spread in the time-frequency plane, then,
because of \eqref{eq_well_spread_tf}, for all $w$-moderated weights $v$ and $p \in [1,+\infty]$
each operator $\tfmolg$ maps $\Mpv(\Rdst)$ into $\Mpv(\Rdst)$ with a norm bound independent
of $\gamma$,
\begin{align*}
\norm{\tfmolg f}_{\Mpv} \leq C_v \norm{g}_{\infty} \norm{\env}_{L^1_w} \norm{f}_{\Mpv}.
\end{align*}
\begin{rem}
In parallel to Proposition~\ref{prop_covers_are_molecules}, if $\sett{\eta_\gamma: \gamma \in \Gamma}$ is a well-spread
family of symbols, then the corresponding family of time-frequency localization operators
$\sett{H_{\eta_\gamma}:\gamma \in \Gamma}$ is well-spread in the time-frequency plane.
\end{rem}

In the case of time-frequency analysis we can strengthen Theorem~\ref{th_almost_orth} by means of
the spectral invariance result in Proposition  \ref{prop_wiener}. Due to this result, the
invertibility assumption in Theorem~\ref{th_almost_orth} can be replaced by assuming invertibility on $\LtRd$.

\begin{theorem}
\label{th_almost_orth_tf}
Let $w:\Rtdst \to (0,+\infty)$ be an admissible TF weight and let $\varphi \in M^1_w(\Rdst)$.
Let $\sett{\tfmolg: \gamma \in \Gamma}$ be well-spread in the TF plane (relative to $(\varphi,w)$).

Suppose that the operator $\sum_\gamma \tfmolg: \LtRd \to \LtRd$ is invertible. Then, 
for all $w$-moderated weights $v$ and for all $1 \leq p \leq +\infty$,
$\sum_\gamma \tfmolg: \Mpv \to \Mpv$ is invertible and the following norm equivalence holds
\begin{align*}
\norm{f}_\Mpv
\approx
\left(\sum_{\gamma \in \Gamma} \norm{\tfmolg f}_{L^2(\Rdst)}^p v(\gamma)^p \right)^{1/p},
\qquad f \in \Mpv(\Rdst),
\end{align*}
with the usual modification for $p=\infty$.
\end{theorem}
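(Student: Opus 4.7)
The plan is to combine the spectral invariance result (Proposition~\ref{prop_wiener}) with the abstract almost-orthogonality principle (Theorem~\ref{th_almost_orth}), instantiated in the modulation space framework developed in Section~\ref{sec_fs_and_mod}. Concretely, I will first upgrade the $\LtRd$-invertibility hypothesis to invertibility of $T := \sum_{\gamma \in \Gamma} \tfmolg$ on every modulation space $\Mpv(\Rdst)$, and then invoke the abstract norm equivalence with $\Esp = L^p_v(\Rtdst)$, $\SEsp = \stft \Mpv(\Rdst)$ and projector $P = \stft \stft^*$ as in Section~\ref{sec_fs_and_mod}.

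For the first step I verify the enveloping condition of Proposition~\ref{prop_wiener} for $T$. The well-spreadness hypothesis \eqref{eq_well_spread_tf} yields, for every $\gamma \in \Gamma$,
\begin{align*}
\abs{\stft \tfmolg f(z)} \leq \int_{\Rtdst} g(z'-\gamma) \abs{\stft f(z')} \env(z-z')\, dz', \qquad z \in \Rtdst.
\end{align*}
Summing over $\gamma$ and using the uniform bound $\sum_{\gamma \in \Gamma} g(z'-\gamma) \leq C_0$ in $z'$ (a standard consequence of $g \in \wrtd$ together with the relative separation of $\Gamma$) produces
\begin{align*}
\sum_{\gamma \in \Gamma} \abs{\stft \tfmolg f(z)} \leq C_0 \int_{\Rtdst} \abs{\stft f(z')} \env(z-z')\, dz'.
\end{align*}
This shows that the series defining $Tf$ converges unconditionally on the STFT side, that $T$ is bounded on $\Mpv(\Rdst)$ by Young's inequality for the $w$-moderated weight $v$, and that $T$ satisfies the enveloping condition of Proposition~\ref{prop_wiener} with kernel $H := C_0 \env \in \wrtd \subseteq \LiwRtd$. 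Combined with the hypothesis of $\LtRd$-invertibility, Proposition~\ref{prop_wiener} delivers invertibility of $T : \Mpv(\Rdst) \to \Mpv(\Rdst)$ for every $w$-moderated weight $v$ and every $p \in [1,\infty]$.

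With invertibility established, the norm equivalence follows from Theorem~\ref{th_almost_orth} applied to the conjugated family $\sett{\stft \tfmolg \stft^* : \gamma \in \Gamma}$ acting on $\SEsp = \stft \Mpv(\Rdst)$. The well-spreadness of $\sett{\tfmolg : \gamma \in \Gamma}$ in the time-frequency plane is precisely the well-spreadness of this conjugated family in the abstract sense, and invertibility of $\sum_\gamma \stft \tfmolg \stft^*$ on $\SEsp$ is equivalent, via the isometry $\stft$, to the invertibility of $T$ on $\Mpv(\Rdst)$ established above. Theorem~\ref{th_almost_orth} then yields
\begin{align*}
\norm{\stft f}_{L^p_v(\Rtdst)} \approx \bignorm{(\norm{\stft \tfmolg \stft^* \stft f}_{\LtRd})_{\gamma \in \Gamma}}_{\Espd(\Gamma)},
\end{align*}
and identifying $\norm{\stft f}_{L^p_v} = \norm{f}_{\Mpv}$, $\stft^* \stft f = f$, $\norm{\stft \tfmolg f}_{\LtRd} = \norm{\tfmolg f}_{\LtRd}$, and $\Espd(\Gamma) = \ellpv(\Gamma)$ (as recorded in Section~\ref{sec_abstract}) produces exactly the stated inequality, including the sup modification for $p=\infty$.

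The principal technical step is the passage from the individual envelopes to a single convolution envelope for $T$ --- that is, the uniform bound $\sum_\gamma g(\cdot - \gamma) \lesssim 1$. Once that combinatorial estimate is in place, Proposition~\ref{prop_wiener} carries out the heavy lifting of extrapolating invertibility from $\LtRd$ to all modulation spaces, and Theorem~\ref{th_almost_orth} produces the norm equivalence essentially by pulling back through the isometric isomorphism $\stft$.
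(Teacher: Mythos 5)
Your proposal is correct and follows essentially the same route as the paper: both first sum the individual envelopes, use $\sum_{\gamma}g(\cdot-\gamma)\lesssim \norm{g}_{\win}$ (via relative separation of $\Gamma$) to verify the enveloping hypothesis of Proposition~\ref{prop_wiener} and thereby upgrade $\LtRd$-invertibility of $\sum_\gamma \tfmolg$ to invertibility on every $\Mpv$, and then conjugate by $\stft$ and apply Theorem~\ref{th_almost_orth} with $\Esp=L^p_v(\Rtdst)$, $\SEsp=\stft(\Mpv)$ to obtain the norm equivalence.
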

\begin{rem}
The estimates hold uniformly for $1 \leq p \leq +\infty$ and any family of weights
having a uniform constant $C_v$ (cf. \eqref{eq_v_moderated}).
\end{rem}
\begin{proof}[Proof of Theorem \ref{th_almost_orth_tf}]
Let $v$ be a $w$-moderated weight and $1 \leq p \leq +\infty$. 
By hypothesis $\sum_\gamma \tfmolg: L^2(\Rdst) \to L^2(\Rdst)$ is invertible. In order to apply Proposition
\ref{prop_wiener} to this operator we let $f \in M^p_v(\Rdst)$ and use \eqref{eq_well_spread_tf} to estimate
\begin{align*}
\abs{V_\varphi\Big(\sum_{\gamma \in \Gamma} \tfmolg f\Big)(z)}
&\leq
\int_{\Rtdst} \abs{\stft f(z')} \env(z-z') \sum_{\gamma \in \Gamma} g(z'-\gamma) dz'
\\
&\lesssim \norm{g}_\win \int_{\Rtdst} \abs{\stft f(z')} \env(z-z') dz'.
\end{align*}
Since $\env \in \win(\Rtdst)$, it follows from Proposition
\ref{prop_wiener} that $\sum_\gamma \tfmolg: M^p_v(\Rdst) \to M^p_v(\Rdst)$ is invertible.

We use the notation $\Spv := V_\varphi(M^p_v(\Rdst))$.
Let $\molg := \stft \tfmolg \stft^*: \Spv \to \Spv$. The family $\sett{\molg: \gamma \in \Gamma}$
is well-spread (in the sense of Section \ref{sec_abs_mol}). Since
$\sum_\gamma \tfmolg: M^p_v(\Rdst) \to M^p_v(\Rdst)$ is invertible, so is
$\sum_\gamma \molg: \Spv \to \Spv$. Therefore, we can apply 
Theorem \ref{th_almost_orth} and the conclusion follows.
\end{proof}

\section{Frames of eigenfunctions of time-frequency localization operators}
\label{sec_frames_tf}
In this section we prove the main results on constructing frames with a prescribed time-frequency profile.
Throughout this section let $w:\Rtdst \to (0,+\infty)$ be an admissible TF weight and $\varphi \in M^1_w(\Rdst)$ be
normalized by $\norm{\varphi}_2=1$. 
\subsection{Thresholding eigenvalues}
\label{sec_thresh}
Let $\sett{\eta_\gamma:\gamma\in\Gamma}$ be a family of non-negative functions on $\Rtdst$ 
that is well-spread (relative to $w$) and consider the corresponding family of time-frequency localization operators 
$
\locmg f = \stft^*(\eta_\gamma \stft f)$.
Since each $\eta_\gamma$ is non-negative and belongs to $\Lisp(\Rtdst)$, $\locmg$ is positive and trace class, 
and $\trace(\locmg)=\norm{\eta_\gamma}_1$, see
\cite{boco02,feno03,wo02, boto08} and Proposition~\ref{prop_mult_hilbert}.
Hence $\locmg: \LtRd \to \LtRd$ can be diagonalized as
\begin{align}
\label{eq_eigen_locmg}
\locmg f = \sum_{k \geq 1} \lambdagk \ip{f}{\phigk}{\phigk},
\qquad f \in \LtRd
\end{align}
where $\set{\phigk}{k \in \NN}$ is an orthonormal subset of $\LtRd$ 
- possibly incomplete if $\ker(\locmg) \not= \sett{0}$ - and $(\lambdagk)_k$
is a non-increasing sequence of non-negative real numbers.

The time-frequency profile of the functions $\set{\phigk}{k \in \NN}$ is optimally adapted to the mask $\eta_\gamma$
in the following sense. For each $N \in \Nst$, the set $\sett{\phi^\gamma_1, \ldots, \phi^\gamma_N}$ is an orthonormal
set maximizing the quantity,
\begin{align}
\label{eq_optimal}
\sum_{k=1}^N \int_\Rtdst \eta_\gamma(z) \abs{\stft f_k(z)}^2 \,dz,
\end{align}
among all orthonormal sets $\sett{f_1, \ldots, f_N} \subseteq \LtRd$. 

Moreover, since $\eta_\gamma \in L^1_w$ and $\varphi \in M^1_w$, $\phigk \in M^1_w$
provided that $\lambdagk \not= 0$ (see for example \cite[Lemma 5]{dogr11}).

For every $\varepsilon >0$, we define the operator $\locmgeps$ by applying a threshold to the eigenvalues of $\locmg$,
\begin{align}
\label{eq_deff_locgeps}
\locmgeps f := \sum_{k:\lambdagk>\varepsilon} \lambdagk \ip{f}{\phigk}{\phigk},
\qquad f \in \LtRd.
\end{align}
Hence,
\begin{align}
\label{eq_Lg_Lgeps}
\norm{\locmgeps f}_2 \leq \norm{\locmg f}_2 \leq \norm{\locmgeps f}_2 + \varepsilon \norm{f}_2,
\qquad f \in \LtRd.
\end{align}

As a first step to analyze the effect of the thresholding operation $\locmg \mapsto \locmgeps$,
we show that  the operators $\sett{\locmg: \gamma \in \Gamma}$ behave ``globally'' like projectors. 
Since these operators may have infinite rank, in general 
$\norm{(\locmg)^2 f}_2 \not\approx \norm{\locmg f}_2$. 
However, we prove the following.

\begin{prop}
\label{prop_squared_covers}
Let $w:\Rtdst \to (0,+\infty)$ be an admissible TF weight and let $\varphi \in M^1_w(\Rdst)$.
Let $\sett{\eta_\gamma: \gamma\in\Gamma}$ be a well-spread 
family of non-negative symbols on $\Rtdst$ (relative to $w$) with $\sum_\gamma
\eta_\gamma \approx 1$. Then for all $w$-moderated weight $v$ and all $p \in [1,+\infty]$
\begin{align}
\norm{f}_\Mpv
\approx
\left(\sum_{\gamma \in \Gamma} \norm{\locmg f}_{L^2(\Rdst)}^p v(\gamma)^p \right)^{1/p}
\approx
\left(\sum_{\gamma \in \Gamma} \norm{(\locmg)^2 f}_{L^2(\Rdst)}^p v(\gamma)^p \right)^{1/p},
\qquad
f \in \Mpv(\Rdst).
\end{align}
\end{prop}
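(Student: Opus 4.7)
The plan is to apply Theorem~\ref{th_almost_orth_tf} twice: first to the family $\{\locmg:\gamma\in\Gamma\}$ and then to the squared family $\{(\locmg)^2:\gamma\in\Gamma\}$. For each application I must verify (i) that the family is well-spread in the time-frequency plane, and (ii) that the corresponding sum is invertible on $\LtRd$. Once (i) and (ii) are in hand for the second family, the spectral invariance built into Theorem~\ref{th_almost_orth_tf} delivers the norm equivalence on all $\Mpv$ at once.

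For the first equivalence, well-spreadness of $\{\locmg\}$ is the remark following the definition of TF well-spreadness in Section~\ref{sec_tf_spread} (the TF analogue of Proposition~\ref{prop_covers_are_molecules}). For invertibility, linearity gives $\sum_\gamma \locmg = H_m$ with $m:=\sum_\gamma \eta_\gamma$. The hypothesis $m\approx 1$ means $c\leq m(z)\leq C$ a.e.\ for some $0<c\leq C<\infty$, and since $\eta_\gamma\geq 0$, Proposition~\ref{prop_mult_hilbert}(a) (transferred via the STFT from the abstract setting of Section~\ref{sec_fs_and_mod}) gives $H_m\geq c\,I$ on $\LtRd$; in particular $H_m$ is invertible. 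Theorem~\ref{th_almost_orth_tf} then yields the first equivalence.

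For the second equivalence, Proposition~\ref{prop_squared_covers_are_molecules}, whose proof transfers verbatim to the time-frequency setting (the convolution $K*K$ stays in $\win\cap\winr$ by Proposition~\ref{prop_conv_algebra}), shows that $\{(\locmg)^2\}$ is well-spread in the TF plane with a new envelope $(\Gamma,K*K,\|g\|_\infty g)$. For invertibility of $\sum_\gamma(\locmg)^2$ on $\LtRd$: since $\eta_\gamma\geq 0$, each $\locmg$ is self-adjoint, so $(\locmg)^*\locmg=(\locmg)^2$. Having already established that $\{\locmg\}$ is well-spread and that $\sum_\gamma \locmg$ is $L^2$-invertible, Corollary~\ref{coro_squares} (applied to the unitarily equivalent multiplier family on $\SLt$) gives the invertibility of $\sum_\gamma(\locmg)^2$ on $\LtRd$. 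A second application of Theorem~\ref{th_almost_orth_tf} then produces the second norm equivalence, with constants uniform in $p$ and in the family of weights $v$ sharing a common $C_v$.

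The step I expect to be the crux, conceptually, is the passage from $\sum_\gamma \locmg$ to $\sum_\gamma(\locmg)^2$. A priori this is not obvious: each $\locmg$ generically has infinite rank (Lemma~\ref{lemma_loc_infinite_rank}), hence $\|\locmg f\|_2 \not\approx \|(\locmg)^2 f\|_2$, and the diagonal cross terms in $\bigl(\sum_\gamma \locmg\bigr)^2=\sum_\gamma (\locmg)^2+\sum_{\gamma\neq\gamma'}\locmg\molgp$ do not vanish. The fact that the ``off-diagonal'' piece can be discarded without losing invertibility is precisely the almost-orthogonality content of Corollary~\ref{coro_squares}, which in turn rests on Theorem~\ref{th_almost_orth}; this is what makes the result nontrivial and is the pivotal input supplied by the well-spread machinery of Section~\ref{sec_abs_mol}.
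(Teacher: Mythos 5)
Your proposal is correct and follows essentially the same route as the paper: invertibility of $\sum_\gamma \locmg=H_m$ on $\LtRd$ from $m=\sum_\gamma\eta_\gamma\gtrsim 1$ (the paper computes $\ip{H_m f}{f}=\int m\abs{\stft f}^2\geq A\norm{f}_2^2$ directly, which is the same positivity argument you cite via Proposition~\ref{prop_mult_hilbert}(a)), then Theorem~\ref{th_almost_orth_tf} for the first equivalence, and Corollary~\ref{coro_squares} together with Proposition~\ref{prop_squared_covers_are_molecules} plus a second application of Theorem~\ref{th_almost_orth_tf} for the squared family. Your identification of Corollary~\ref{coro_squares} (with $(\locmg)^*\locmg=(\locmg)^2$ by self-adjointness) as the crux is exactly the paper's reasoning.
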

\begin{rem}
The estimate $\norm{f}_\Mpv \approx \norm{ (\norm{\locmg f}_2)_\gamma }_\ellpv$
is contained in \cite{dofegr06, dogr11} for the case of families of symbols consisting of lattice translates of a single
function, and in \cite{ro12} in the present generality. The norm equivalence involving $(\locmg)^2 f$ is new.
\end{rem}
\begin{proof}[Proof of Proposition \ref{prop_squared_covers}]
Since the symbols $\eta_\gamma$ satisfy $m:=\sum_\gamma \eta_\gamma \geq A$ for some constant $A>0$ it follows that
\begin{align*}
&\ip{\sum_{\gamma\in\Gamma} \locmg f}{f} = \ip{\locm f}{f} = \ip{\stft^*(m \stft f)}{f}
\\ &\qquad= \ip{m\stft f}{\stft f} = \int_\Rtdst m(z) \abs{\stft f(z)}^2 dz
\geq A \norm{\stft f}^2_2
= A \norm{f}^2_2.
\end{align*}
Hence, $\sum_\gamma \locmg: \LtRd \to \LtRd$ is invertible. 
Now Proposition \ref{prop_covers_are_molecules} and Theorem \ref{th_almost_orth_tf} yield the estimate
involving $\norm{f}_\Mpv$ and $\norm{ (\norm{\locmg f}_2)_\gamma }_\ellpv$.

For the estimate involving the squared operators, note that Corollary \ref{coro_squares} implies that
$\sum_\gamma (\locmg)^2: \SLt \to \SLt$ is also invertible. According to Proposition
\ref{prop_squared_covers_are_molecules},
the family $\sett{(\locmg)^2: \gamma \in \Gamma}$ is also well-spread. Hence, a second application
of Theorem \ref{th_almost_orth_tf} concludes the proof.
\end{proof}

We now state and prove that a sufficiently fine thresholding of the eigenvalues
of the operators $\sett{\locmg: \gamma \in \Gamma}$ still decomposes the family of modulation spaces.
\begin{theorem}
\label{th_finite_rank_tf}
Let $w:\Rtdst \to (0,+\infty)$ be an admissible TF weight and let $\varphi \in M^1_w(\Rdst)$.
Let $\sett{\eta_\gamma: \gamma \in \Gamma}$ be a family of non-negative symbols on $\Rtdst$ 
that is well-spread (relative to $w$) and such that $\sum_\gamma \eta_\gamma
\approx 1$. Let $v$ be a $w$-moderated weight. Then there exist constants $0<c \leq C < +\infty$
such that for all sufficiently small $\varepsilon >0$ and all $p \in [1,+\infty]$
\begin{align*}
c \norm{f}_\Mpv \leq
\left(\sum_{\gamma \in \Gamma} \norm{\locmgeps f}_{L^2(\Rdst)}^p v(\gamma)^p \right)^{1/p}
\leq C \norm{f}_\Mpv,
\qquad f \in \Mpv(\Rdst),
\end{align*}
with the usual modification for $p=\infty$.

The choice of $\varepsilon$ and the estimates are uniform for $1 \leq p \leq +\infty$ and any family of weights
having a uniform constant $C_v$ (cf. \eqref{eq_v_moderated}).
\end{theorem}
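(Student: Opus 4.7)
The plan is to combine Proposition~\ref{prop_squared_covers} with a pointwise-in-$\gamma$ operator estimate relating $\locm_{\eta_\gamma}^2$, $\locmgeps$ and $\locmg$. The norm equivalences in Proposition~\ref{prop_squared_covers} already control $\|f\|_\Mpv$ both by $(\norm{\locmg f}_2)_\gamma$ and by $(\norm{(\locmg)^2 f}_2)_\gamma$ in the $\ell^p_v$ norm. The task is to insert $\locmgeps$ into this chain, which requires compensating for the deleted tail $\locmg-\locmgeps$ in a way that is itself controlled in $\ell^p_v$.

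The upper bound is immediate: since $\locmg$ is positive and self-adjoint with diagonalization \eqref{eq_eigen_locmg}, thresholding can only decrease the norm, $\norm{\locmgeps f}_2 \leq \norm{\locmg f}_2$ (see \eqref{eq_Lg_Lgeps}), so the right-hand inequality follows from the upper half of Proposition~\ref{prop_squared_covers}, with $C$ equal to the corresponding constant there.

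For the lower bound I would use the splitting
\[
(\locmg)^2 f \;=\; \locmg(\locmgeps f) \;+\; \locmg\bigl((\locmg-\locmgeps)f\bigr).
\]
The first summand is bounded in $L^2$ by $\norm{\eta_\gamma}_\infty \norm{\locmgeps f}_2 \leq \norm{g}_\infty \norm{\locmgeps f}_2$. Since $\locmg-\locmgeps$ equals $\sum_{\lambdagk\leq\varepsilon}\lambdagk\langle\cdot,\phigk\rangle\phigk$ and its range is orthogonal to that of $\locmgeps$, composing once more with $\locmg$ contributes eigenvalues $(\lambdagk)^2\leq\varepsilon\lambdagk$, giving $\norm{\locmg((\locmg-\locmgeps)f)}_2\leq \varepsilon \norm{(\locmg-\locmgeps)f}_2 \leq \varepsilon \norm{\locmg f}_2$. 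Thus, uniformly in $\gamma$,
\[
\norm{(\locmg)^2 f}_2 \;\leq\; \norm{g}_\infty \, \norm{\locmgeps f}_2 \;+\; \varepsilon\, \norm{\locmg f}_2 .
\]
Taking $\ell^p_v$-norms on both sides and applying both equivalences of Proposition~\ref{prop_squared_covers} produces
\[
A \norm{f}_\Mpv \;\leq\; \norm{g}_\infty \, \bigl\| (\norm{\locmgeps f}_2)_\gamma\bigr\|_{\ell^p_v} \;+\; \varepsilon\, B \norm{f}_\Mpv ,
\]
so choosing $\varepsilon$ so that $\varepsilon B < A/2$ allows one to absorb the second term, yielding the lower estimate with $c = A/(2\norm{g}_\infty)$. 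The main obstacle is precisely the need for this absorption step: the naive inequality $\norm{\locmg f}_2 \leq \norm{\locmgeps f}_2+\varepsilon\norm{f}_2$ from \eqref{eq_Lg_Lgeps} introduces the global term $\varepsilon\norm{f}_2$ whose $\ell^p_v$-norm over $\gamma$ diverges, and it is the switch to $(\locmg)^2$ --- together with the squared-operator equivalence in Proposition~\ref{prop_squared_covers} --- that produces a remainder which is itself ``localized'' in phase space and hence absorbable. Finally, the uniformity claim follows because the constants $A,B$ obtained from Proposition~\ref{prop_squared_covers} (via Theorem~\ref{th_almost_orth_tf}) depend only on the envelope data of the well-spread families $\{\locmg\}$ and $\{(\locmg)^2\}$, on $\rho(\Gamma)$, on $\norm{\varphi}_{M^1_w}$ and on $C_v$, but not on $p$ or on the specific weight $v$.
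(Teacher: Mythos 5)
Your argument is correct and is essentially the paper's own proof: your splitting $(\locmg)^2 f=\locmg\locmgeps f+\locmg(\locmg-\locmgeps)f$ reproduces, with explicit constants, the paper's step of applying \eqref{eq_Lg_Lgeps} to $\locmg f$ together with the commutativity of $\locmg$ and $\locmgeps$, after which both proofs take $\ell^p_v$-norms, invoke the two equivalences of Proposition~\ref{prop_squared_covers}, and absorb the $\varepsilon$-term for small $\varepsilon$. The upper bound and the uniformity discussion also match the paper.
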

\begin{proof}
Given  $\varepsilon>0$ and $f \in \Mpv(\Rdst)$ we apply \eqref{eq_Lg_Lgeps} to $\locmg(f)$ to obtain
\begin{align*}
\norm{(\locmg)^2 f}_2 \leq \norm{\locmgeps \locmg f}_2 + \varepsilon \norm{\locmg f}_2.
\end{align*}
Since $\locmgeps$ and $\locmg$ commute,
$\norm{\locmgeps \locmg f}_2 = \norm{\locmg \locmgeps f}_2
\lesssim \norm{\locmgeps f}_2$. Hence,
\begin{align*}
\norm{(\locmg)^2 f}_2 \lesssim \norm{\locmgeps f}_2 + \varepsilon \norm{\locmg f}_2.
\end{align*}
Taking $\ellpv$ norm on $\gamma$ yields
\begin{align*}
\norm{( \norm{(\locmg)^2 f}_2 )_\gamma}_\ellpv
\lesssim
\norm{( \norm{\locmgeps f}_2)_\gamma)}_\ellpv
+ \varepsilon \norm{( \norm{\locmg f}_2 )_\gamma}_\ellpv.
\end{align*}
Using the estimates in Proposition \ref{prop_squared_covers} we get
\begin{align*}
\norm{f}_\Mpv \leq C \norm{ (\norm{\locmgeps f}_2)_\gamma }_\ellpv
+ c \varepsilon \norm{f}_\Mpv,
\end{align*}
for some constants $c,C$. Hence,
\begin{align*}
(1-c\varepsilon) \norm{f}_\Mpv \leq C \norm{ (\norm{\locmgeps f}_2)_\gamma }_\ellpv.
\end{align*}
This gives the desired lower bound (for all $0<\varepsilon <1/c$).
The upper bound follows from the first inequality in
\eqref{eq_Lg_Lgeps} and Proposition \ref{prop_squared_covers}.
\end{proof}

\begin{rem}
Note that the proof of Theorem \ref{th_finite_rank_tf} only uses the estimate in \eqref{eq_Lg_Lgeps} and the fact
that $\locmg$ and $\locmgeps$ commute. Hence, more general thresholding rules besides
the one in \eqref{eq_deff_locgeps} can be used.
\end{rem}

\subsection{Frames of eigenfunctions}\label{Sec:FoE}
Finally, we obtain the desired result on frames of eigenfunctions.

\begin{theorem}
\label{th_frames_tf}
Let $w:\Rtdst \to (0,+\infty)$ be an admissible TF weight and let $\varphi \in M^1_w(\Rdst)$.
Let $\sett{\eta_\gamma: \gamma\in\Gamma}$ be a family of non-negative symbols on $\Rtdst$ 
that is well-spread (relative to $w$) and such that $\sum_\gamma \eta_\gamma \approx 1$.
Let $v$ be a $w$-moderated weight. Then there exists a constant $\alpha>0$
such that, for every choice of finite subsets of eigenfunctions of $\locmg$
$\set{\phigk}{\gamma \in \Gamma, 1 \leq k \leq N_\gamma}$ with 
\begin{align*}
\alpha \norm{\eta_\gamma}_1 \leq N_\gamma \mbox{ and }
\sup_{\gamma \in \Gamma} N_\gamma < +\infty,
\end{align*}
the following frame estimates hold simultaneously for all $1 \leq p \leq +\infty$, with the usual modification for $p=
\infty$:
\begin{align*}
\norm{f}_{\Mpv} \approx
\left(
\sum_{\gamma \in \Gamma} \sum_{k=1}^{N_\gamma} \abs{\ip{f}{ \lambdagk \phi^{\gamma}_k}}^p v(\gamma)^p
\right)^{1/p},
\qquad f \in M^p(\Rdst).
\end{align*}
Moreover, $\alpha$ can be chosen uniformly for any class of weights $v$ having a uniform constant $C_v$
(cf. \eqref{eq_v_moderated}).
\end{theorem}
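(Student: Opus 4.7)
The plan is to reduce the claim to the thresholded norm equivalence provided by Theorem \ref{th_finite_rank_tf}, using a trace-class argument to control how many eigenvalues lie above the threshold. First I would apply Theorem \ref{th_finite_rank_tf} to fix a small $\varepsilon_0>0$ for which
$$
\norm{f}_{\Mpv} \approx
\Bigl(\sum_{\gamma\in\Gamma}\norm{\locmgeps f}_2^p v(\gamma)^p\Bigr)^{1/p},
\qquad f\in\Mpv(\Rdst),
$$
holds uniformly in $p\in[1,+\infty]$ and uniformly over the relevant class of weights $v$.

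The next step controls the size of the thresholded spectrum. By Proposition \ref{prop_mult_hilbert}(b), each $\locmg$ is positive and trace class with $\trace(\locmg)\lesssim\norm{\eta_\gamma}_1$, where the implicit constant depends only on $\varphi$ and $w$ (the kernel $\kernelenv$ from assumption (A2)). Hence the number $N^{\varepsilon_0}_\gamma:=\#\sett{k:\lambdagk>\varepsilon_0}$ is bounded by $C_0\norm{\eta_\gamma}_1/\varepsilon_0$ for a uniform constant $C_0$. Choosing $\alpha:=C_0/\varepsilon_0$, the hypothesis $N_\gamma\geq\alpha\norm{\eta_\gamma}_1$ then guarantees that the chosen family $\{\phigk\}_{k=1}^{N_\gamma}$ already contains every eigenfunction corresponding to eigenvalues above $\varepsilon_0$.

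For each $\gamma$, orthonormality of $(\phigk)_k$ gives
$$
\norm{\locmgeps f}_2^2
\leq \sum_{k=1}^{N_\gamma}\abs{\ip{f}{\lambdagk\phigk}}^2
\leq \norm{\locmg f}_2^2.
$$
The finiteness $N:=\sup_\gamma N_\gamma<+\infty$ now allows me to replace the inner $\ell^2$ norm on the truncated sequence by an $\ell^p$ norm via the standard equivalence of norms on $\bC^N$, with constants depending only on $p$ and $N$. Raising to the $p$-th power, multiplying by $v(\gamma)^p$ and summing over $\gamma$, the lower frame bound follows from the equivalence in the preceding display combined with Theorem \ref{th_finite_rank_tf} at level $\varepsilon_0$, while the upper bound follows by comparing with $\sum_\gamma\norm{\locmg f}_2^p v(\gamma)^p$ and applying Proposition \ref{prop_squared_covers}.

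The main obstacle is the uniform trace bound in the second step: one must verify that $C_0$ is independent of $\gamma$ and, in order for $\alpha$ to work across any class of weights with a common constant $C_v$, that the entire chain of estimates (trace bound, Theorem \ref{th_finite_rank_tf}, Proposition \ref{prop_squared_covers}) can be carried out with constants depending only on $\varphi$, $w$, $\rel(\Gamma)$, the envelope $g$ of $\sett{\eta_\gamma}$, and the constants in $\sum_\gamma\eta_\gamma\approx 1$. This uniformity is built into the well-spread formalism of Section \ref{sec_abs_mol} and the remarks following Theorem \ref{th_almost_orth_tf} and Theorem \ref{th_finite_rank_tf}, so no new technical machinery is needed once the threshold $\varepsilon_0$ has been fixed.
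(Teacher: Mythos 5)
Your proposal is correct and follows essentially the same route as the paper: fix a threshold $\varepsilon$ via Theorem \ref{th_finite_rank_tf} and Proposition \ref{prop_squared_covers}, use the trace bound to guarantee that the first $N_\gamma$ eigenfunctions capture all eigenvalues above the threshold, sandwich $\sum_{k=1}^{N_\gamma}\abs{\ip{f}{\lambdagk\phigk}}^2$ between $\norm{\locmgeps f}_2^2$ and $\norm{\locmg f}_2^2$, and convert the inner $\ell^2$ sums to $\ell^p$ using $\sup_\gamma N_\gamma<\infty$. The only cosmetic difference is that the paper uses the exact identity $\trace(\locmg)=\norm{\eta_\gamma}_1$ (so $\alpha=\varepsilon^{-1}$) where you invoke the abstract bound $\trace(\locmg)\lesssim\norm{\eta_\gamma}_1$ from Proposition \ref{prop_mult_hilbert}; both suffice.
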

Before proving Theorem \ref{th_frames_tf} we make some remarks.
\begin{rem}
As opposed to other constructions that partition either the time or frequency domain (see e.g.
\cite{fegr85, alcamo04-1, boho06, camoro11}), the symbols $\eta_\gamma$ partition the time-frequency plane
simultaneously.
\end{rem}
\begin{rem}
Note that, if $(\Gamma, \env, g)$ is an envelope for $\sett{\eta_\gamma: \gamma \in \Gamma}$,
since $\norm{\eta_\gamma}_1 \leq \norm{g}_1$, in Theorem~\ref{th_frames_tf}
it is always possible to make a uniform choice $N_\gamma = N_0$.
\end{rem}
\begin{proof}[Proof of Theorem~\ref{th_frames_tf}]
For every $\gamma \in \Gamma$ and $\varepsilon>0$, let
$I^\varepsilon_\gamma := \sett{k \in \NN / \lambdagk > \varepsilon}$, which is a finite set.
Using Theorem \ref{th_finite_rank_tf}, Proposition~\ref{prop_squared_covers} 
and the orthonormality of the eigenfunctions, we can find a value of $\varepsilon>0$ such that
\begin{align*}
\norm{f}_{\Mpv} &\approx
\Big(
\sum_{\gamma \in \Gamma} \big(\sum_{k\in \NN} \abs{\ip{f}{\lambdagk\phigk}}^2 \big)^{p/2} v(\gamma)^p
\Big)^{1/p}
\\
&\approx\Big(
\sum_{\gamma\in\Gamma} \big(\sum_{k \in I^\varepsilon_\gamma} \abs{\ip{f}{\lambdagk\phigk}}^2 \big)^{p/2}
v(\gamma)^p
\Big)^{1/p},\end{align*}
with the usual modification for $p=+\infty$. This implies that for any choice of subsets
of indices $J^\varepsilon_\gamma \supseteq I^\varepsilon_\gamma$ we also have,
\begin{align}
\label{eq_Jgamma}
\norm{f}_{\Mpv} \approx
\Big(
\sum_{\gamma\in\Gamma} \big(\sum_{k \in J^\varepsilon_\gamma} \abs{\ip{f}{\lambdagk\phigk}}^2 \big)^{p/2} v(\gamma)^p
\Big)^{1/p}.
\end{align}
Furthermore, since $\sum_k \lambdagk = \trace(\locmg)= \norm{\eta_\gamma}_1$, we have 
$\# I^\varepsilon_\gamma \leq \varepsilon^{-1} \sum_k \lambdagk = \varepsilon^{-1} \norm{\eta_\gamma}_1$.
Hence, setting $\alpha := \varepsilon^{-1}$, we ensure that
for $N_\gamma \geq \alpha \norm{\eta_\gamma}_1$, the set
$J^\varepsilon_\gamma := \set{k \in \NN}{1 \leq k \leq N_\gamma}$ contains $I^\varepsilon_\gamma$ and therefore
satisfies \eqref{eq_Jgamma}.
Let $N:= \sup_\gamma N_\gamma$. By hypothesis, $N<\infty$.
Since $\# J^\varepsilon_\gamma = N_\gamma \leq N$, it follows that
\begin{align*}
(\sum_{k \in J^\varepsilon_\gamma} \abs{\ip{f}{\lambdagk\phigk}}^2 )^{1/2}
\approx
(\sum_{k \in J^\varepsilon_\gamma}\abs{\ip{f}{\lambdagk\phigk}}^p )^{1/p},
\end{align*}
with a constant that depends on $N$ (and the usual modification for $p=+\infty$). Combining the last estimate
with \eqref{eq_Jgamma} we obtain the desired conclusion.
\end{proof}
As an application of Theorem~\ref{th_frames_tf} we can now prove Theorem \ref{th_sample} 
(stated in the Introduction).
\begin{proof}[Proof of Theorem \ref{th_sample}]
Let $\sett{\Omega_\gamma: \gamma \in \Gamma}$ be an admissible cover of $\Rtdst$,
as defined in the Introduction - cf. \eqref{eq_admissibility_no_inner}. Let $\eta_\gamma := 1_{\Omega_\gamma}$ be the
characteristic function of $\Omega_\gamma$. The fact that $\sett{\Omega_\gamma: \gamma \in \Gamma}$ covers $\Rtdst$
implies that $\sum_\gamma \eta_\gamma = \sum_\gamma 1_{\Omega_\gamma} \geq 1$. In addition, 
if we let $g := 1_{B_R(0)}$, it follows from \eqref{eq_admissibility_no_inner} that $\sett{\eta_\gamma: \gamma \in
\Gamma}$ is well-spread with envelope $(\Gamma,g)$ and constant weight $w \equiv 1$. We note that $\abs{\Omega_\gamma} =
\norm{\eta_\gamma}_1$ and apply Theorem \ref{th_frames_tf} with $p=2$ and $v \equiv 1$. (Recall that
$M^2(\Rdst)=L^2(\Rdst)$.)
\end{proof}
\subsection{Inner regularity}
Finally we derive a variant of Theorem \ref{th_frames_tf} where, under an inner regularity assumption 
on the family of symbols, we renormalize the frame of eigenfunctions so that each frame element has norm 1.
To this end we first prove the following lemma which may be of independent interest.

\begin{lemma}
\label{lemma_loc_infinite_rank}
Let $\varphi \in \LtRd  \backslash\sett{0}$ and let $\Omega \subseteq \Rdst$ 
be a measurable set with non-empty interior. Then the time-frequency
localization operator $H_\Omega: \LtRd \to \LtRd$,
\begin{align*}
H_\Omega f(t) = \int_{\Omega} \Vstft_{\varphi}f(x,\xi) \varphi(t-x) e^{2\pi i\xi t} dx d\xi,
\qquad t \in \Rdst,
\end{align*}
has infinite rank.
\end{lemma}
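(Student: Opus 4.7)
The plan is to reduce the statement to the linear independence of modulations of a nonzero $L^2$ function. Write $\pi(z)=M_\xi T_x$ for $z=(x,\xi)\in\Rtdst$, with $T_x$ and $M_\xi$ the usual translation and modulation operators, so that $\stft f(z)=\ip{f}{\pi(z)\varphi}$. First I fix a non-empty open ball $B\subseteq\operatorname{int}(\Omega)$. From $\ip{H_\Omega f}{f}=\int_\Omega\abs{\stft f}^2$ one has $\ker H_\Omega\subseteq\ker H_B$ and consequently $\operatorname{rank}H_\Omega\geq\operatorname{rank}H_B$, so it suffices to show $\operatorname{rank}H_B=+\infty$. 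Since $\stft f$ is continuous, $\ker H_B=\{f\in\LtRd:\stft f|_B\equiv 0\}=(\overline{\operatorname{span}}\{\pi(z)\varphi:z\in B\})^\perp$, and hence $\operatorname{rank}H_B=\dim\overline{\operatorname{span}}\{\pi(z)\varphi:z\in B\}$.

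Next, I would use the commutation relation $\pi(z_0+h)=c(z_0,h)\,\pi(z_0)\pi(h)$ with $|c(z_0,h)|=1$; conjugation by the unitary $\pi(z_0)$ (with $z_0$ the center of $B$ and $r$ its radius) shows the above dimension equals $\dim\overline{\operatorname{span}}\{\pi(h)\varphi:h\in B_r(0)\}$. Restricting to $h$ of the form $(0,\eta)$ with $\eta\in V'$, a small ball in $\Rdst$, this dominates $\dim\overline{\operatorname{span}}\{M_\eta\varphi:\eta\in V'\}$, so the task reduces to showing that $\{M_\eta\varphi:\eta\in V'\}$ spans an infinite-dimensional subspace of $\LtRd$.

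For this, I pick any $N$ distinct points $\eta_1,\ldots,\eta_N\in V'$ and verify linear independence of $M_{\eta_1}\varphi,\ldots,M_{\eta_N}\varphi$. If $\sum_j c_j e^{2\pi i\eta_j\cdot t}\varphi(t)=0$ for a.e.\ $t$, then on the positive-measure set $A=\{\varphi\neq 0\}$ the exponential sum $P(t):=\sum_j c_j e^{2\pi i\eta_j\cdot t}$ vanishes a.e.\ on $A$. Since $P$ is the restriction of an entire function on $\bC^d$ to $\Rdst$, and the zero set of a nonzero real-analytic function on $\Rdst$ has Lebesgue measure zero, $P\equiv 0$ on $\Rdst$. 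Linear independence of the distinct characters $e^{2\pi i\eta_j\cdot t}$ (by taking Fourier transforms, giving distinct Dirac masses) then forces all $c_j=0$. Taking $N\to\infty$ gives $\operatorname{rank}H_B=+\infty$. The only delicate step is the passage from a.e.\ vanishing on $A$ to vanishing identically via real analyticity, but this is classical; everything else is routine bookkeeping with time-frequency shifts and the reproducing-kernel structure of $\stft$.
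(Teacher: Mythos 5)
Your proof is correct, but it takes a genuinely different route from the paper's. The paper argues by contradiction via Hermite functions: if $H_\Omega$ had rank $n-1$, some nonzero combination $f$ of the first $n$ Hermite functions would lie in $\ker H_\Omega$, hence $V_\varphi f$ would vanish on $\Omega$; the associated function $z \mapsto m(z)\,V_f\varphi(\overline{z})$ is polyanalytic of order at most $n$, and a nontrivial polyanalytic function cannot vanish on a set with non-empty interior, forcing $f=0$, a contradiction. You instead use positivity and self-adjointness to identify $\overline{\operatorname{ran}}\,H_B$ with $\overline{\operatorname{span}}\,\sett{\pi(z)\varphi : z \in B}$ (via $\ip{H_B f}{f}=\int_B \abs{\stft f}^2$ and continuity of the STFT), reduce by the covariance relation $\pi(z_0+h)=c(z_0,h)\,\pi(z_0)\pi(h)$ to pure modulations, and then show that finitely many distinct modulations of a nonzero $L^2$ function are linearly independent, because a nontrivial exponential sum is real-analytic and hence cannot vanish a.e.\ on the positive-measure set $\sett{\varphi \neq 0}$. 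Your route is more elementary: it avoids the polyanalytic Bargmann--Fock machinery and Balk's uniqueness theorem, needing only the classical facts that zero sets of nontrivial real-analytic functions are null and that distinct characters are linearly independent; it also yields a little extra, namely that the closed range of $H_B$ is exactly the closed span of the time-frequency shifts $\pi(z)\varphi$, $z \in B$. The paper's argument, in turn, gives the finer information that no nonzero element of the span of the first $n$ Hermite functions can have STFT vanishing on an open set, and connects with the polyanalytic techniques the authors invoke elsewhere. Two small bookkeeping points you rely on and should state: the inference $\operatorname{rank} H_\Omega \geq \operatorname{rank} H_B$ from $\ker H_\Omega \subseteq \ker H_B$ uses that both operators are self-adjoint (so the closure of the range is the orthogonal complement of the kernel), and the lemma's hypothesis ``$\Omega \subseteq \Rdst$'' is a typo for $\Omega \subseteq \Rtdst$, which you correctly interpreted.
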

\begin{proof}
The proof is based on the fact that the STFT of Hermite functions are weighted
polyanalytic functions (cf. \cite{ab10, ab10-1, abgr10}) and therefore cannot vanish on a ball.

Suppose, for the sake of contradiction, that $H_\Omega$ has rank $n-1<+\infty$. Let $h_1, \ldots, h_n \in \LtRd$
be multi-dimensional Hermite functions of order $\leq n$. For example, if $g_1, \ldots, g_n \subseteq L^2(\Rst)$
are the first one-dimensional Hermite functions, we can let $h_k \in \LtRd$ be the tensor product
$h_k(x_1, \ldots, x_d) := g_k(x_1) g_1(x_2) \ldots g_1(x_d)$.
Let $V_n$ be the subspace of $\LtRd$ spanned by $h_1, \ldots, h_n$.

Since $V_n$ has dimension $n$, it follows that there exists some nonzero
$f=\sum_{k=1}^n c_k h_k \in V_n$ such that $H_\Omega f =0$. Consequently,
\begin{align*}
0 = \ip{H_\Omega f}{f} = \int_\Omega \abs{V_\varphi f(z)}^2 dz,
\end{align*}
and therefore $V_\varphi f \equiv 0$ on $\Omega$. With the notation $(x,\xi) =: z \in \bC^n$
and $m(z) = e^{- x \cdot \xi i + \pi \abs{z}^2}$, 
let $F(z) := m(z) V_f \varphi(\overline{z})$.
Since $V_\varphi f(z) = \overline{V_f \varphi(-z)}$, it follows that $F$ vanishes on $\Omega' := -\overline{\Omega}$.
We will show that $F \equiv 0$. Since $m$ never vanishes, this will imply that $f \equiv 0$, thus yielding a
contradiction.

The function $F(z) = \sum_k c_k m(z) V_{h_k} \varphi(-z)$ is a polyanalytic function of order (at most) $n$,
i.e. $(\partial/\partial_{\overline{z}})^\beta F \equiv 0$, for every multi-index $\abs{\beta} \leq n$,
see \cite{ab10, ab10-1, abgr10}. A polyanalytic function
that
vanishes
on a set of non-empty interior must vanish identically. For $d=1$ this can be proved directly by induction on $n$ or
deduced from much sharper uniqueness results (see \cite{ba91-1}). The case of general dimension $d$ reduces to $d=1$ by
fixing $d-1$ variables of $F$ and applying the one-dimensional result.
\end{proof}
We also quote the following intertwining property. 
\begin{lemma}
\label{lemma_cov}
For $z=(x,\xi) \in \Rdst \times \Rdst$, consider the time-frequency shift
\begin{align*}
\pi(z) f(t) := e^{2\pi i \xi t} f(t-x), \qquad f \in \LtRd.
\end{align*}
For $m \in L^\infty(\Rtdst)$, let $H_m: \LtRd \to \LtRd$ be the time-frequency localization operator
with symbol $m$, i.e. $H_m f:= \stft^* (m\stft f)$. Then
\begin{align*}
\pi(z) H_m \pi(z)^* = H_{m(\cdot-z)}, \qquad z=(x,\xi) \in \Rdst \times \Rdst, m \in L^\infty(\Rtdst).
\end{align*}
\end{lemma}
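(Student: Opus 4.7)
The plan is to reduce the identity to the covariance of the STFT under time-frequency shifts. Since $H_m = \stft^* M_m \stft$, where $M_m$ denotes pointwise multiplication by $m$ on $L^2(\Rtdst)$, the natural strategy is to move the outer factors $\pi(z)$, $\pi(z)^*$ across $\stft^*$, $\stft$ by an intertwining relation, and then check that the resulting conjugation acts on the multiplier $m$ as a translation by $z$.

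First I would carry out the direct integration that yields the STFT covariance relation: for $z=(x,\xi)$ and $w=(x',\xi')$,
\begin{align*}
\stft(\pi(z)f)(w) = e^{2\pi i(\xi-\xi')x}\, \stft f(w-z).
\end{align*}
This is standard and follows from a change of variable in the integral defining $\stft$. Writing $\Phi_z(w):=e^{2\pi i(\xi-\xi')x}$, the identity reads $\stft\, \pi(z) = U_z\, \stft$, where $U_z$ is the unitary operator on $L^2(\Rtdst)$ defined by $(U_z F)(w)=\Phi_z(w)F(w-z)$. Taking adjoints (and using unitarity of $\pi(z)$ and $U_z$) gives $\stft\, \pi(z)^* = U_z^* \stft$ and $\pi(z)\stft^* = \stft^* U_z$; equivalently, $\stft^* U_z^* = \pi(z)^*\stft^*$.

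Second, I would verify the elementary intertwining between multiplication and $U_z^*$:
\begin{align*}
M_m U_z^* = U_z^*\, M_{m(\cdot-z)}.
\end{align*}
This is a one-line pointwise check using $(U_z^* G)(w) = \overline{\Phi_z(w+z)}\,G(w+z)$: the phase $\overline{\Phi_z(w+z)}$ depends only on $w$ and $z$, not on $m$, so it commutes with any multiplier, while the translation re-indexes $m$ into $m(\cdot-z)$. Chaining everything together yields
\begin{align*}
\pi(z) H_m \pi(z)^* &= \pi(z)\stft^* M_m (\stft\, \pi(z)^*) = \pi(z)\stft^* M_m U_z^* \stft \\
&= \pi(z)\stft^* U_z^* M_{m(\cdot-z)} \stft = \pi(z)\pi(z)^* \stft^* M_{m(\cdot-z)} \stft = H_{m(\cdot-z)},
\end{align*}
as required.

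There is no substantive obstacle here; the only thing to watch is the bookkeeping of the unimodular phase $\Phi_z$. However, once the intertwining $\stft\,\pi(z)=U_z\stft$ is in place with $U_z$ unitary, the explicit form of $\Phi_z$ is never needed at the end—the cancellations in the chain above are forced by the three operator identities derived in the first two steps.
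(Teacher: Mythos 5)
Your proof is correct: the covariance relation $\stft\,\pi(z)=U_z\stft$, the adjoint identities, the commutation $M_m U_z^* = U_z^* M_{m(\cdot-z)}$, and the final cancellations all check out, and the unimodular phase indeed never needs to be made explicit at the end. The paper gives no proof of its own beyond calling it a straightforward calculation and citing \cite[Lemma 2.6]{dofegr06}, which rests on the same STFT covariance computation, so your route is essentially the intended one.
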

The proof of Lemma \ref{lemma_cov} is a straightforward calculation, see \cite[Lemma 2.6]{dofegr06}.
Using Lemma \ref{lemma_loc_infinite_rank} we obtain a variant of Theorem \ref{th_frames_tf}.

\begin{theorem}
\label{th_frames_tf_noeigen}
Let $w:\Rtdst \to (0,+\infty)$ be an admissible TF weight and let $\varphi \in M^1_w(\Rdst)$.
Let $\sett{\eta_\gamma: \gamma\in\Gamma}$ be a family of non-negative symbols on $\Rtdst$ 
that is well-spread (relative to $w$) and such that $\sum_\gamma \eta_\gamma \approx 1$.
Let $v$ be a $w$-moderated weight.
Assume in addition that there exists a ball $B_r$ and a constant $c>0$ such that
\begin{equation}\label{lowbound}
\eta_\gamma(z) \geq c1_{B_r}(z-\gamma),
\qquad z \in \Rtdst, \gamma \in \Gamma.
\end{equation}
Then there exists a constant $\alpha>0$
such that, for every choice of finite subsets of eigenfunctions
$\set{\phigk}{\gamma \in \Gamma, 1 \leq k \leq N_\gamma}$ 
with $\inf_\gamma N_\gamma \geq \alpha$ and $\sup_\gamma N_\gamma < +\infty$,
the following frame estimates hold simultaneously
for all $1 \leq p \leq +\infty$, with the usual modification for $p= \infty$:
\begin{align}\label{inequnweight}
\norm{f}_{\Mpv} \approx
\left(
\sum_{\gamma\in\Gamma} \sum_{k=1}^{N_\gamma} \abs{\ip{f}{\phi^{\gamma}_k}}^p v(\gamma)^p
\right)^{1/p},
\qquad f \in M^p_v(\Rdst).
\end{align}
Moreover, $\alpha$ can be chosen uniformly for any class of weights $v$ having a uniform constant $C_v$
(cf. \eqref{eq_v_moderated}).
\end{theorem}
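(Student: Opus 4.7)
The plan is to deduce Theorem \ref{th_frames_tf_noeigen} from Theorem \ref{th_frames_tf} by showing that, under the inner regularity assumption \eqref{lowbound}, the eigenvalues $\lambda^\gamma_k$ for the indices $k$ appearing in the frame are uniformly bounded above and below by positive constants. Once such two-sided bounds $0 < c' \leq \lambda^\gamma_k \leq C'$ are established for $1 \leq k \leq N_\gamma$ (uniformly in $\gamma$), the identity $|\langle f, \lambda^\gamma_k \phi^\gamma_k \rangle| = \lambda^\gamma_k |\langle f, \phi^\gamma_k \rangle|$ makes the weighted and unweighted frame coefficients pointwise comparable, and the conclusion \eqref{inequnweight} follows immediately from Theorem \ref{th_frames_tf}. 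Note that since $\|\eta_\gamma\|_1 \leq \|g\|_1$ uniformly, the lower bound $N_\gamma \geq \alpha_0 \|\eta_\gamma\|_1$ in Theorem \ref{th_frames_tf} can be replaced by $N_\gamma \geq \tilde\alpha := \alpha_0 \|g\|_1$, justifying the form of the hypothesis here.

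For the upper bound on the eigenvalues, I would simply use $\lambda^\gamma_k \leq \lambda^\gamma_1 \leq \|H_{\eta_\gamma}\|_{\mathrm{op}} \leq \|\eta_\gamma\|_\infty \leq \|g\|_\infty$, where the first inequality uses the descending order of eigenvalues, the second is the definition of operator norm, and the last two use the well-spread envelope. This gives $C' = \|g\|_\infty$ uniformly in $\gamma$ and $k$.

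The main obstacle, and the technical heart of the argument, is the uniform lower bound. First I would invoke \eqref{lowbound} together with Proposition \ref{prop_mult_hilbert}(a) to obtain the operator inequality $H_{\eta_\gamma} \geq c\, H_{1_{B_r(\gamma)}}$ on $L^2(\Rdst)$. Next, by the covariance relation in Lemma \ref{lemma_cov} applied with $m = 1_{B_r(\gamma)}$, the operator $H_{1_{B_r(\gamma)}}$ is unitarily equivalent via $\pi(\gamma)$ to $H_{1_{B_r}}$, so they share the same spectrum. Call the eigenvalues of $H_{1_{B_r}}$ in descending order $\mu_1 \geq \mu_2 \geq \cdots \geq 0$; by Lemma \ref{lemma_loc_infinite_rank}, $H_{1_{B_r}}$ has infinite rank, hence $\mu_k > 0$ for every $k \in \NN$. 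Applying the Courant minimax principle to the ordered inequality $H_{\eta_\gamma} \geq c\, H_{1_{B_r(\gamma)}}$ yields $\lambda^\gamma_k \geq c\, \mu_k$ for all $k$ and all $\gamma$. Since $N := \sup_\gamma N_\gamma < +\infty$ by hypothesis, for every $k \leq N_\gamma$ one has $\lambda^\gamma_k \geq c\, \mu_N > 0$, providing the desired uniform lower bound $c' = c\,\mu_N$.

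The hardest conceptual point is the need for $H_{1_{B_r}}$ to have infinitely many positive eigenvalues; this is why Lemma \ref{lemma_loc_infinite_rank} is essential and not merely ornamental. Without it, the sequence $\mu_k$ could terminate, and an a priori bound $N_\gamma \leq N$ on the number of chosen eigenfunctions would be incompatible with a uniform positive lower bound on the corresponding eigenvalues. With the infinite rank fact in hand, however, truncating to any finite index $N$ still leaves $\mu_N > 0$, closing the argument. The uniformity of $\alpha$ (equivalently $\tilde\alpha$) across weight classes with a common constant $C_v$ is inherited directly from the corresponding uniformity in Theorem \ref{th_frames_tf}, since none of the additional estimates used here depend on $v$ or $p$.
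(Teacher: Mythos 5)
Your proposal is correct and follows essentially the same route as the paper's own proof: reduce to Theorem \ref{th_frames_tf} after noting $\norm{\eta_\gamma}_1\lesssim\norm{g}_1$, bound the eigenvalues above via $\norm{H_{\eta_\gamma}}_{2\to 2}\leq\norm{g}_\infty$, and obtain the uniform lower bound from $H_{\eta_\gamma}\geq c\,H_{1_{B_r}(\cdot-\gamma)}$ together with Lemma \ref{lemma_cov}, the infinite-rank Lemma \ref{lemma_loc_infinite_rank}, and eigenvalue monotonicity. The only difference is that you make the Courant minimax comparison $\lambda^\gamma_k\geq c\,\mu_k$ explicit, which the paper leaves implicit.
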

Before proving Theorem \ref{th_frames_tf_noeigen} we make some remarks.
\begin{rem}
When $\eta_\gamma$ is the characteristic function of a set $\Omega_\gamma$, the condition 
in \eqref{lowbound} holds whenever the sets satisfy:
$B_r(\gamma) \subseteq \Omega_\gamma \subseteq B_R(\gamma)$,
with $R>r>0$ and $\Gamma$ a relatively separated set.
\end{rem}

\begin{rem}
The frame in Theorem \ref{th_frames_tf_noeigen} comprises the first $N_\gamma$ elements of each of the orthonormal
sets $\sett{\phigk: k \geq 1}$. These first $N_\gamma$ functions are the ones that are best concentrated, according
to the weight $\eta_\gamma$.
This resembles the problem studied in \cite{ro11}. However the results there require very precise information on the
frames
being pieced together and hence do not apply here.
\end{rem}
\begin{rem}
In the language of \cite{os94-1, caku04}, 
Theorem \ref{th_frames_tf_noeigen} shows that the subspaces spanned by the finite families of
eigenfunctions form a \emph{stable splitting} or \emph{fusion frame}. From an application point of view, it is useful to
have orthogonal projections onto subspaces with time-frequency concentration in a prescribed area of the
time-frequency
plane.
\end{rem}
\begin{rem}
When $\eta_\gamma$ is the characteristic function of a set $\Omega+\gamma$ and $\Gamma$ is a lattice, then
Theorem \ref{th_frames_tf_noeigen} reduces to the main technical result in \cite{dogr11}. The proof there does not adapt
to the irregular context, since it relies on the use of rotation algebras (non-commutative tori). The proof we give
here
resorts instead to the almost-orthogonality techniques from \cite{ro12} (cf. Theorem \ref{th_almost_orth}) together with
spectral invariance results for pseudodifferential operators with symbols in the Sj\"ostrand class
\cite{{sj94,sj95,gr06,gr06-3}}.
\end{rem}

\begin{proof}[Proof of Theorem~\ref{th_frames_tf_noeigen}]
First note that \eqref{lowbound}, together with the well-spreadness condition, implies that $\norm{\eta_\gamma}_1
\approx
1$, (the constants, of course, depend on $r,R$ and $c$). Hence the
condition $\alpha \norm{\eta_\gamma}_1 \leq N_\gamma$ required by Theorem \ref{th_frames_tf} can be
granted by simply requiring $\tilde{\alpha} \leq N_\gamma$ to hold with a different constant $\tilde{\alpha}$.

By Theorem~\ref{th_frames_tf} we have that
$\norm{f}_{\Mpv} \approx
\Big(
\sum_{\gamma\in\Gamma} \sum_{k=1}^{N_\gamma} \abs{\ip{f}{ \lambdagk \phi^{\gamma}_k}}^p v(\gamma)^p
\Big)^{1/p}$.
Hence, it suffices to show that $\lambdagk \approx 1$, for $1 \leq k \leq N_\gamma$.

The upper bound follows from the well-spreadness condition, because if $(\Gamma, \env, g)$ is an envelope
for $\sett{H_{\eta_\gamma}: \gamma \in \Gamma}$, then
all the singular values of $H_{\eta_\gamma}$ are bounded
by $\norm{H_{\eta_\gamma}}_{2 \to 2} \leq \|g\|_\infty$, cf. \eqref{eq_bound_M}.

Let $N:= \sup_\gamma N_\gamma$. By Lemma \ref{lemma_loc_infinite_rank}, the localization operator $H_{B_r}$ has
infinite rank. Hence, the non-zero eigenvalues of $H_{B_r}$ form an infinite non-increasing sequence
$\lambda^R_k>0$, $k \geq 1$. From \eqref{lowbound} it follows that $\locmg \geq c H_{B_r + \gamma}$ 
(cf. Proposition \ref{prop_mult_hilbert}). By Lemma \ref{lemma_cov}, the sequence of
eigenvalues of $H_{B_r+\gamma}$ coincides with the one of $H_{B_r}$.
Consequently, for $1 \leq k \leq N_\gamma$
\begin{align*}
\lambdagk \geq c \lambda^R_k \geq c \lambda^R_{N_\gamma}\geq c \lambda^R_N >0,
\end{align*}
as desired.
\end{proof}

\begin{rem}
For the results in this section, the abstract setting of Section \ref{sec_abstract} allows for the replacement of
$\ellpv$ by
more general normed spaces. Indeed, the results derived in the abstract setting cover modulation spaces 
defined with respect to general translation-invariant solid spaces, cf.~\cite{fegr89}.
\end{rem}

\section{Frames of eigenfunctions: general $L^2$ estimates}
\label{sec_abs_frames}
In this section we prove results similar to the ones in Section \ref{sec_frames_tf}, but this time in the abstract
setting of Section~\ref{sec_abs_mol}. We work only with the space $L^2$ instead of treating a class of Banach
spaces. The reason for this restriction is that in Section~\ref{sec_frames_tf} we used tools from 
the theory of pseudodifferential operators to extend certain results from $L^2(\Rdst)$ to a range of modulation spaces,
and those tools are not available in the abstract setting. The proofs in this section are, mutatis mutandis, the same as
in Section \ref{sec_tf} and will be just sketched.

Let $\Gc$ be a locally compact, $\sigma$-compact group. Let $\Esp=L^2(\Gc)$ and let us assume that (A1), (A2) and
(A3) from Section \ref{sec_model} hold. Let a well-spread family $\sett{\eta_\gamma: \gamma \in \Gamma}$ of non-negative
functions
on $\Gc$ be given. We consider the corresponding family of phase-space multipliers,
\begin{align*}
\multmg f = P(\eta_\gamma f), \quad f \in \SLt.
\end{align*}
Since each $\eta_\gamma$ is non-negative and belongs to $\Lisp(\Gc)$, according to Proposition~\ref{prop_mult_hilbert},
the corresponding operator $\multmg: \SLt \to \SLt$ is positive and trace-class, and $\trace(\multmg) \lesssim
\norm{\eta_\gamma}_1$.
Let $\multmg: \SLt \to \SLt$ be diagonalized as
\begin{align}
\label{eq_abs_diag}
\multmg f = \sum_{k \geq 1} \lambdagk \ip{f}{\phigk}{\phigk},
\qquad f \in \SLt,
\end{align}
where $\set{\phigk}{k \geq 1}$ is an orthonormal subset of $\SLt$ 
and $\set{\lambdagk}{k \geq 1}$ is decreasing. Let us define
\begin{align*}
\multmgeps f =
\sum_{k:\lambdagk>\varepsilon}
\lambdagk \ip{f}{\phigk}{\phigk}, \qquad f \in \SLt.
\end{align*}
If $\multmg$ has finite rank, then $\lambdagk = 0$, for $k \gg 1$ and the choice
of the corresponding eigenfunctions is arbitrary.

We now derive results similar to Theorems~\ref{th_finite_rank_tf} and ~\ref{th_frames_tf}, but this time in the 
current abstract setting.

\begin{theorem}
\label{th_finite_rank_abs_Lt}
Under Assumptions (A1), (A2) and (A3), let $\sett{\eta_\gamma: \gamma \in \Gamma}$ be a well-spread family of
non-negative symbols such that $\sum_\gamma \eta_\gamma \approx 1$. Then there exist constants $0<c \leq C < +\infty$
such that for all sufficiently small $\varepsilon >0$,
\begin{align*}
c \norm{f}_2^2 \leq \sum_{\gamma\in\Gamma} \norm{\multmgeps f}_2^2  \leq C \norm{f}_2^2,
\qquad f \in \SLt.
\end{align*}
Furthermore, there exists a constant $\alpha>0$ such that, for every choice of
numbers $\{N_\gamma: \gamma \in \Gamma\} \subseteq \Nst$ satisfying
$\alpha \norm{\eta_\gamma}_1 \leq N_\gamma$ and $\sup_\gamma N_\gamma < +\infty$,
the family
\begin{align}
\label{eq_frame_abs}
\set{\lambdagk \phigk}{\gamma \in \Gamma, 1 \leq k \leq N_\gamma},
\end{align}
formed from eigenfunctions and eigenvalues of the operator $\multmg$, is a frame of $\SLt$.
\end{theorem}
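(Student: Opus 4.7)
The plan is to transcribe, more or less verbatim, the argument used in Section \ref{sec_frames_tf} for Theorems \ref{th_finite_rank_tf} and \ref{th_frames_tf}, replacing the enhanced almost-orthogonality principle of Theorem \ref{th_almost_orth_tf} (which rested on the pseudodifferential spectral invariance of Proposition \ref{prop_wiener}) with its $L^2$-only predecessor Theorem \ref{th_almost_orth}. The argument splits into three steps: (i) an abstract analog of Proposition \ref{prop_squared_covers}, (ii) the thresholding estimate involving $\multmgeps$, and (iii) the derivation of the frame inequality from a bound on the trace of $\multmg$.

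For step (i) I would first observe that $\sum_\gamma \multmg: \SLt\to\SLt$ is invertible: by Proposition \ref{prop_mult_hilbert}(a) the assumption $\sum_\gamma \eta_\gamma \gtrsim 1$ yields $\sum_\gamma \multmg \geq A\cdot I_{\SLt}$ as positive operators (since $M_1 f=P(f)=f$ on $\SLt$ by (A3)). The family $\{\multmg\}$ is well-spread by Proposition \ref{prop_covers_are_molecules}, so Theorem \ref{th_almost_orth} (applied with $\Esp=L^2(\Gc)$) gives $\norm{f}_2^2 \approx \sum_\gamma \norm{\multmg f}_2^2$. Each $\multmg$ is self-adjoint, so Corollary \ref{coro_squares} grants invertibility of $\sum_\gamma (\multmg)^2$, and Proposition \ref{prop_squared_covers_are_molecules} guarantees that $\{(\multmg)^2\}$ is itself well-spread; a second application of Theorem \ref{th_almost_orth} then produces
\begin{align*}
\norm{f}_2^2 \approx \sum_{\gamma\in\Gamma} \norm{\multmg f}_2^2 \approx \sum_{\gamma\in\Gamma} \norm{(\multmg)^2 f}_2^2, \qquad f\in\SLt.
\end{align*}

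For step (ii) I follow the proof of Theorem \ref{th_finite_rank_tf} line by line. The spectral decomposition \eqref{eq_abs_diag} yields $\norm{\multmg g}_2 \leq \norm{\multmgeps g}_2+\varepsilon\norm{g}_2$; applying this to $g=\multmg f$ and exploiting that $\multmg$ commutes with $\multmgeps$ together with the uniform bound $\norm{\multmg}_{L^2\to L^2} \lesssim 1$ (from \eqref{eq_bound_M} and well-spreadness) gives $\norm{(\multmg)^2 f}_2 \lesssim \norm{\multmgeps f}_2 + \varepsilon\norm{\multmg f}_2$. Squaring, summing in $\gamma$, and invoking the two equivalences from step (i) yields $\norm{f}_2^2 \lesssim \sum_\gamma \norm{\multmgeps f}_2^2 + \varepsilon^2\norm{f}_2^2$, and for $\varepsilon$ small enough the error term is absorbed; the matching upper bound is immediate from $\norm{\multmgeps f}_2\leq\norm{\multmg f}_2$. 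For step (iii) the trace bound $\trace(\multmg)=\sum_k \lambdagk \lesssim \norm{\eta_\gamma}_1$ from Proposition \ref{prop_mult_hilbert}(b) implies that $I^\varepsilon_\gamma:=\{k:\lambdagk>\varepsilon\}$ satisfies $\#I^\varepsilon_\gamma \leq \varepsilon^{-1}\sum_k\lambdagk \lesssim \varepsilon^{-1}\norm{\eta_\gamma}_1$. Choosing $\alpha$ accordingly forces $\{1,\dots,N_\gamma\}\supseteq I^\varepsilon_\gamma$ whenever $N_\gamma\geq\alpha\norm{\eta_\gamma}_1$, and by orthonormality of $\{\phigk\}_k$
\begin{align*}
\norm{\multmgeps f}_2^2 = \sum_{k\in I^\varepsilon_\gamma}\abs{\lambdagk\ip{f}{\phigk}}^2 \leq \sum_{k=1}^{N_\gamma}\abs{\lambdagk\ip{f}{\phigk}}^2 \leq \norm{\multmg f}_2^2.
\end{align*}
Summing in $\gamma$ and sandwiching the middle expression between the two equivalent quantities from steps (i) and (ii) gives the frame inequality for the family \eqref{eq_frame_abs}.

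The main obstacle, and the conceptual core, is the right-hand norm equivalence in step (i). The operators $(\multmg)^2$ are no longer themselves phase-space multipliers (indeed Lemma \ref{lemma_loc_infinite_rank} warns that $\multmg$ generally has infinite rank, so it is not close to its square), and consequently $\norm{\multmg f}_2\not\approx\norm{(\multmg)^2 f}_2$ pointwise in $\gamma$; the equivalence holds only globally after summing in $\gamma$. The decisive input is that Theorem \ref{th_almost_orth} was set up for arbitrary well-spread families of operators (not merely multipliers) and that well-spreadness is preserved by finite composition (Proposition \ref{prop_squared_covers_are_molecules}); once these two facts are in hand the remaining work is simply the bookkeeping sketched above.
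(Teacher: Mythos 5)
Your proposal is correct and follows essentially the same route as the paper: invertibility of $\sum_\gamma \multmg$ via Proposition \ref{prop_mult_hilbert}, two applications of Theorem \ref{th_almost_orth} (through Propositions \ref{prop_covers_are_molecules}, \ref{prop_squared_covers_are_molecules} and Corollary \ref{coro_squares}) to get $\norm{f}_2^2 \approx \sum_\gamma \norm{\multmg f}_2^2 \approx \sum_\gamma \norm{(\multmg)^2 f}_2^2$, then the thresholding/commutation estimate and the trace bound $\trace(\multmg)\lesssim\norm{\eta_\gamma}_1$ exactly as in the paper's proof. No discrepancies worth noting.
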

Before proving Theorem \ref{th_finite_rank_abs_Lt} we make some remarks.
\begin{rem}
Note again that when $\eta_\gamma$ is the characteristic function of a set $\Omega_\gamma$, we are picking
$\approx \abs{\Omega_\gamma}$ eigenfunctions from each phase-space multiplier.
Here, $\abs{\Omega_\gamma}$ is the Haar measure of $\Omega_\gamma$.
\end{rem}
\begin{rem}
The operator $\multmg$ may have finite rank (for example if $\Gc$ is a discrete group and $\eta_\gamma$ is
the characteristic function of a finite set). In this case the choice of the eigenfunctions associated to the singular
value
zero is irrelevant, since in \eqref{eq_frame_abs} these are multiplied by zero.
\end{rem}
\begin{proof}[Proof of Theorem \ref{th_finite_rank_abs_Lt}]
We parallel the proofs in Section \ref{sec_frames_tf}.
Since $\sum_\gamma \multmg = M_{\sum_\gamma \eta_\gamma}$ and $\sum_\gamma \eta_\gamma \geq A>0$, it follows 
from Proposition \ref{prop_mult_hilbert} that $\sum_\gamma \multmg$ is positive definite and therefore invertible.
Theorem \ref{th_almost_orth} consequently yields,
\begin{align}
\sum_{\gamma\in\Gamma} \norm{\multmg f}_{L^2(\Gc)}^2
\approx
\norm{f}_2^2,
\qquad f \in \SLt.
\end{align}
In addition, Proposition~\ref{prop_squared_covers_are_molecules}, Corollary \ref{coro_squares} 
and a second application of Theorem \ref{th_almost_orth} yield
\begin{align*}
\sum_{\gamma\in\Gamma} \norm{\multmg^2 f}_{L^2(\Gc)}^2
\approx
\norm{f}_2^2,
\qquad f \in \SLt.
\end{align*}
The thresholded operators $\multmgeps$ satisfy
\begin{align*}
\norm{\multmgeps f}_2 \leq \norm{\multmg f}_2 \leq \norm{\multmgeps f}_2 + \varepsilon \norm{f}_2,
\qquad f \in \SLt.
\end{align*}
Applying this to $\multmg f$ and noting that $\multmg$ and $\multmgeps$ commute gives
\begin{align*}
\norm{\multmg^2 f}_2 &\leq \norm{\multmgeps \multmg f}_2 + \varepsilon \norm{\multmg f}_2,
\\
&= \norm{\multmg \multmgeps f}_2 + \varepsilon \norm{\multmg f}_2,
\\
&\lesssim 
\norm{\multmgeps f}_2 + \varepsilon \norm{\multmg f}_2,
\qquad f \in \SLt.
\end{align*}
Putting all these inequalities together gives
\begin{align*}
\big(\sum_{\gamma\in\Gamma} \norm{\multmgeps f}^2_2 \big)^{1/2}
\lesssim \norm{f}_2
\lesssim \big(\sum_{\gamma\in\Gamma} \norm{\multmgeps f}^2_2 \big)^{1/2} + \varepsilon \norm{f}_2,
\qquad f \in \SLt.
\end{align*}
This implies that, for $0<\varepsilon \ll 1$,
$\big(\sum_{\gamma\in\Gamma} \norm{\multmgeps f}^2_2 \big)^{1/2}
\approx \norm{f}_2$,
as claimed. The fact that the system in \eqref{eq_frame_abs} is a frame of $\SLt$ now follows like
in Theorem \ref{th_frames_tf}, this time using Proposition~\ref{prop_mult_hilbert} to estimate:
$\# \sett{\lambdagk: \lambdagk > \varepsilon} \leq \varepsilon^{-1} \trace(\multmg)
\lesssim \varepsilon^{-1} \norm{\eta_\gamma}_1$.
\end{proof}

\subsection{Application to time-scale analysis}
\label{sec_wavelets}
We now show how to apply Theorem~\ref{th_finite_rank_abs_Lt}  to time-scale analysis.
Let $\psi: \Rdst \to \bC$ be a Schwartz-class radial function with several vanishing moments. The \emph{wavelet
transform} of a function $f \in \LtRd$ with respect to $\psi$ is defined by
\begin{align}
W_\psi f(x,s) = s^{-d/2}\int_{\Rdst} f(t)
\overline{\psi \left(\frac{t-x}{s}\right)}dt,
\qquad x \in \Rdst, s>0.
\end{align}
If $\psi$ is properly normalized (and we assume so thereof), $W_\psi$ maps $\LtRd$ isometrically into
$\Ltsp(\Rdst \times \Rst_{+}, s^{-(d+1)}dxds)$. For a measurable bounded
symbol $m: \Rdst \times \Rst_{+} \to \bC$, the \emph{wavelet multiplier}
$\mathrm{WM}_m: \LtRd \to \LtRd$ is defined as
\begin{align}
\label{eq_deff_WM}
\mathrm{WM}_m f(t)
= \int_0^{+\infty} \int_\Rdst
m(x,s) W_\psi f(x,s) \pi(x,s) \psi (t) dx \frac{ds}{s^{d+1}}, \qquad t \in \Rdst,
\end{align}
where $\pi(x,s) \psi (t) := s^{-d/2} \psi \left(\frac{t-x}{s}\right)$.
(Here, the integral converges in the weak sense.) Note that $\mathrm{WM}_m=W^*_\psi(m W_\psi F)$.
The operator $\mathrm{WM}_m$ is also known as \emph{wavelet localization operator} \cite{da88,dapa88,da90}.

In order to apply the model from Section \ref{sec_model} we consider the affine group
$\Gc = \Rdst \times \Rst_{+}$,
where multiplication is given by $(x,s)\cdot(x',s') = (x+sx',ss')$.
The Haar measure in $\Gc$ is given by $\abs{X} = \int_X s^{-(d+1)} dxds$ and the modular
function is given by $\Delta(x,s)=s^{-d}$.

We let $\Esp:=\Ltsp(\Gc)$ and $\SEsp := W_\psi \LtRd$. In complete analogy to the
time-frequency analysis case, we let $P:= W_\psi W^*_\psi: \Ltsp(\Gc) \to W_\psi \LtRd$
be the orthogonal projection and $\kernelenv := \abs{W_\psi \psi}$. We further
let $w(x,s) := \max\sett{1, s^d}$. The kernel $\kernelenv$
belongs to $\win \cap \winr$ if $\psi$ has sufficiently many vanishing moments (see
\cite[Section 4.2]{grpi09}).

As an example of a well-spread family of symbols we consider the characteristic functions
of a cover of $\Gc$ by irregular boxes. Let us take as centers the points
\begin{align*}
\Gamma := \set{\gamma_{j,l} := (l2^j,2^j)}{j \in \Zst, l\in\Zdst},
\end{align*}
and consider a family of boxes around $(0,1) \in \Rdst\times\Rst_{+}$,
\begin{align}
\label{eq_set_1}
V_{j,l} := [-a_{j,l}^1 /2,a_{j,l}^1 /2] \times \ldots \times [-a_{j,l}^d /2,a_{j,l}^d /2]
\times [(b_{j,l})^{-1},b_{j,l}],
\end{align}
where $0 \leq a^i_{j,l} \leq a < +\infty$, $i = 1,\ldots , d$ and
$0 < b^{-1} \leq b_{j,l} \leq b < +\infty$.
Let us set
\begin{align}
\label{eq_set_2}
U_{j,l} := \gamma_{j,l} V_{j,l}, \qquad j \in \Zst, l\in\Zdst.
\end{align}
The family of characteristic functions $\sett{1_{U_{j,l}}: j \in \Zst, l\in\Zdst}$ is well-spread,
with envelope $(\Gamma,g)$, where $\Gamma := \set{\gamma_{j,l} := (l2^j,2^j)}{j \in \Zst, l\in\Zdst}$
and $g:=1_{[-a/2,a/2]^d \times [b^{-1},b]}$.

Note that
$\norm{1_{U_{j,l}}}_{L^1(\Gc)} = \mes{U_{j,l}} = \mes{V_{j,l}} =
\frac{1}{d}\prod_{i = 1}^d a_{j,l}^i \cdot [(b_{j,l})^d-({b_{j,l}})^{-d}]$.
Theorem~\ref{th_finite_rank_abs_Lt} yields the following.

\begin{theorem}\label{Th:WavCase}
Suppose that the sets $\set{U_{j,l}}{j\in\Zst,l\in\Zdst}$ from \eqref{eq_set_1} and \eqref{eq_set_2}
cover $\Rdst \times \Rst_{+}$.

For each $j\in\Zst,l\in\Zdst$ let the wavelet multiplier $WM_{1_{U_{j,l}}}$ - cf. \eqref{eq_deff_WM} -
have eigenfunctions $\{\phi^k_{j,l}: k \geq 1\}$ with corresponding
eigenvalues $\{\lambda^k_{j,l}: k \geq 1\}$ ordered decreasingly.

Then there exists a constant $\alpha>0$ such for every choice of numbers $\{N_{j,l}: j\in\Zst,l\in\Zdst\} \subseteq
\Nst$
satisfying
\begin{align*}
\alpha \mes{U_{j,l}} \leq N_{j,l}
\mbox { and }
\sup_{j,l} N_{j,l} < +\infty,
\end{align*}
the family $\set{\lambda^k_{j,l} \phi^k_{j,l}}{j\in\Zst,l\in\Zdst, 1 \leq k \leq N_{j,l}}$
is a frame of $\LtRd$.
\end{theorem}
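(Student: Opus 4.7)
The plan is to realize the wavelet setting as an instance of the abstract phase-space model of Section~\ref{sec_abstract} and then invoke Theorem~\ref{th_finite_rank_abs_Lt} verbatim. The data are $\Gc = \Rdst \times \Rst_{+}$ (the affine group, with the multiplication and Haar measure described in Section~\ref{sec_wavelets}), $\Esp = L^2(\Gc)$, $\SEsp = W_\psi \LtRd$, $P = W_\psi W_\psi^*$, and $w(x,s) = \max\{1, s^d\}$. I would set the kernel to $K := |W_\psi\psi|$ and write the family of symbols as $\eta_{j,l} := 1_{U_{j,l}}$ with nodes $\gamma_{j,l} = (l2^j, 2^j)$.

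First I would verify (A1)--(A3). Since $W_\psi$ is an isometry onto $\SEsp$, the latter is a closed subspace of $L^2(\Gc)$, its elements are continuous (reproducing kernel property of the wavelet transform), and $P = W_\psi W_\psi^*$ is the orthogonal projection, which yields (A3). The weight $w$ is easily checked to be admissible for $L^2(\Gc)$. For (A2) the crucial point is the pointwise domination
\begin{equation*}
|PF(y)| \le \int_\Gc |F(z)|\, K(z^{-1}y)\,dz, \qquad F \in L^2(\Gc),
\end{equation*}
which follows from writing $PF = W_\psi W_\psi^* F$ and using the covariance of the wavelet transform, just as in Example~\ref{ex_tf}. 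That $K = |W_\psi\psi|$ lies in $W(L^\infty, L^1_w) \cap W_R(L^\infty, L^1_w)$ is the one nontrivial input, and it is the standard decay estimate for Calderón-type reproducing kernels on the affine group under the assumption that $\psi$ is Schwartz with enough vanishing moments; I would quote \cite{grpi09} here.

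Next I would check that $\{\eta_{j,l}\}_{j,l}$ is a well-spread family of symbols relative to $w$, with envelope $(\Gamma, g)$ where $\Gamma = \{\gamma_{j,l}\}$ and $g = 1_{[-a/2,a/2]^d \times [b^{-1},b]}$. By construction $U_{j,l} = \gamma_{j,l}\, V_{j,l}$, so $\gamma_{j,l}^{-1} z \in V_{j,l} \subseteq \supp(g)$ exactly when $z \in U_{j,l}$; hence $\eta_{j,l}(z) \le g(\gamma_{j,l}^{-1}z)$. Relative separation of $\Gamma$ in the affine group is a direct computation from the dyadic structure, and $g \in \winr$ because it has compact support and $w$ is locally bounded. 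The same bound $g(\gamma_{j,l}^{-1}\cdot)$, together with relative separation, yields the upper bound $\sum_{j,l} \eta_{j,l} \lesssim 1$; the lower bound $\sum_{j,l}\eta_{j,l} \ge 1$ is the hypothesis that $\{U_{j,l}\}$ covers $\Gc$. Thus $\sum_{j,l}\eta_{j,l} \approx 1$.

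With all hypotheses verified, Theorem~\ref{th_finite_rank_abs_Lt} applies to produce a constant $\alpha>0$ such that for any $N_{j,l}$ with $\alpha \|\eta_{j,l}\|_1 \le N_{j,l}$ and $\sup_{j,l} N_{j,l} < \infty$, the system $\{\lambda^k_{j,l}\phi^k_{j,l}: j \in \Zst, l \in \Zdst, 1 \le k \le N_{j,l}\}$ is a frame of $\SLt$. Transporting this through the isometry $W_\psi$ (which intertwines the wavelet multiplier $WM_{1_{U_{j,l}}}$ with the phase-space multiplier $M_{\eta_{j,l}}$) gives a frame of $\LtRd$. Since $\|\eta_{j,l}\|_1 = |U_{j,l}|$, the lower bound on $N_{j,l}$ takes the stated form. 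The single real obstacle is the decay/amalgam property of $K = |W_\psi\psi|$; everything else is mechanical verification that the affine-group setting matches the abstract template.
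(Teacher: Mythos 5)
Your proposal is correct and follows essentially the same route as the paper: Section~\ref{sec_wavelets} sets up exactly this data (affine group, $\Esp=L^2(\Gc)$, $\SEsp=W_\psi\LtRd$, $P=W_\psi W_\psi^*$, $\kernelenv=|W_\psi\psi|$, $w(x,s)=\max\{1,s^d\}$, envelope $(\Gamma,g)$ with $g=1_{[-a/2,a/2]^d\times[b^{-1},b]}$) and then derives the theorem by invoking Theorem~\ref{th_finite_rank_abs_Lt} and transporting through the isometry $W_\psi$, with the kernel's membership in $\win\cap\winr$ quoted from \cite{grpi09} just as you do. Your verification of $\sum_{j,l}\eta_{j,l}\approx 1$ (upper bound from relative separation plus the compactly supported envelope, lower bound from the covering hypothesis) is a detail the paper leaves implicit, and you supply it correctly.
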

\subsection{Application to Gabor analysis}
\label{sec_gabor}
Let us consider a window $\varphi\in M^1 (\mathbb{R}^d)$ with $\norm{\varphi}_2=1$ and a
(full rank) lattice $\Lambda\subseteq\mathbb{R}^{2d}$, i.e. $\Lambda=P \Zst^{2d}$, where $P \in \Rst^{2d
\times 2d}$ is an invertible matrix. The \emph{Gabor system} associated with $\varphi$ and $\Lambda$ is the collection
of functions
\begin{align*}
\mathcal{G}_{\varphi, \Lambda} :=
\set{\varphi_\lambda (t):=e^{2\pi i \xi t}\varphi(t-x)}{\lambda=(x,\xi) \in \Lambda}.
\end{align*}
We assume that this collection of functions is a \emph{tight-frame}. This means that 
for some constant $A>0$, every function $f \in L^2(\Rdst)$ admits the expansion
\begin{align}
\label{eq_gab_exp}
f=A  \sum_{\lambda\in\Lambda}  \ip{f}{\pi(\lambda) \varphi}{\pi(\lambda ) \varphi}.
\end{align}
In this section we show how to apply the abstract results from Section \ref{sec_abs_frames} to obtain frames 
consisting of functions $f$ whose coefficients $\ip{f}{\pi(\lambda) \varphi}$ have a prescribed profile.

For a bounded sequence $m:\Lambda\rightarrow \bC$, the \emph{Gabor multiplier} $GM_{m}: L^2(\mathbb{R}^d ) \rightarrow
L^2(\mathbb{R}^d )$ is defined by applying the mask $m$ to the frame expansion in \eqref{eq_gab_exp}
\begin{align}
\label{eq_gab_mul}
GM_{m} f=A \sum_{\lambda\in\Lambda}  m(\lambda) \ip{f}{\pi(\lambda) \varphi}{\pi(\lambda ) \varphi}.
\end{align}
(See ~\cite{feno03} for a survey on Gabor multiplier; see also \cite{doto10,gr11-2}.)
If $m \geq 0$, the first $N$ eigenfunctions of $GM_{m}$ form an orthonormal set in
$\LtRd$ that maximizes the quantity
\begin{align*}
\sum_{j = 1}^N  \sum_{\lambda\in\Lambda} m(\lambda) \abs{\ip{f_j}{\pi(\lambda) \varphi}}^2
\end{align*}
among all orthonormal sets of $N$ functions $\{f_1, \ldots, f_N\} \subseteq \LtRd$.

Let us show how the abstract setting of Section \ref{sec_model} can be applied. The discussion is analogous to
Example \ref{ex_tf}. We let $\Gc=\Lambda$, considered as
a group and $\Esp := \ell^2(\Lambda)$. Consider the \emph{analysis operator} $C_{\Lambda,\varphi}:
L^2(\mathbb{R}^d) \rightarrow \ell^2 (\Lambda )$ given by $ C_{\Lambda,\varphi} f(\lambda ) = \sqrt{A}\langle f
, \pi (\lambda ) \varphi \rangle$. Let $S_{\ell^2} := C_{\Lambda,\varphi}( L^2(\mathbb{R}^d))$. Since we assume
that $\mathcal{G}_{\varphi, \Lambda}$ is a tight frame, the operator
$C_{\Lambda,\varphi}: L^2(\mathbb{R}^d) \rightarrow \ell^2 (\Lambda)$ is an isometry
- cf. \eqref{eq_gab_exp}. The
orthogonal projection $P: \ell^2(\Lambda) \to S_{\ell^2}$ is then $P=C_{\Lambda,\varphi} 
C^\ast_{\Lambda,\varphi}$ and is therefore represented by the matrix $ \kappa(\mu, \lambda) =
A\langle \varphi_\lambda, \varphi_\mu \rangle$. 
Consequently,
\begin{equation*}
|\kappa(\mu, \lambda )| = A|\langle \varphi_\lambda, \varphi_\mu \rangle|
=A| \Vstft_{\varphi} \varphi(\mu-\lambda)|,
\quad \mu, \lambda \in \Lambda.
\end{equation*}
Since $\varphi \in M^1 (\mathbb{R}^d)$, $C_{\Lambda, \varphi}$ maps $M^1 (\mathbb{R}^d)$ into
$\ell^1(\Lambda)$ (see for example~\cite{fezi98}) and we conclude that
$\kernelenv := A\abs{\Vstft_{\varphi} \varphi _{|\Lambda}}=
\sqrt{A} \abs{C_{\Lambda, \varphi} \varphi} \in \ell^1(\Lambda)=W(\ell^\infty,\ell^1)(\Lambda)$.
Hence, (A1), (A2) and (A3) from Section
\ref{sec_abstract} are satisfied with $\Gc=\Lambda$, $\Esp=\ell^2(\Lambda)$ and $w\equiv 1$.

In addition, note that the Gabor multiplier in \eqref{eq_gab_mul} satisfies
$GM_{m} f = C^*_{\Lambda,\varphi} (m C_{\Lambda,\varphi} f )$.
Hence $C_{\Lambda,\varphi} GM_{m} C^*_{\Lambda,\varphi}: S_{\ell^2} \to S_{\ell^2}$
is a phase-space multiplier with symbol $m$ - cf. Section \ref{sec_phmul}.

As an example of a well-spread family of symbols on $\Lambda$ we may now consider a well-spread family
$\sett{\eta_\gamma: \gamma \in \Gamma}$ of non-negative symbols defined on $\Rst^{2d}$, where $\Gamma \subseteq \Lambda$
is a relatively separated subset of $\Rdst$ and $\sum_\gamma \eta_\gamma \approx 1$, and restrict each $\eta_\gamma$ to
$\Lambda$. As an application of \ref{th_finite_rank_abs_Lt} we obtain the following result.

\begin{theorem}\label{Th:GabMul}
Let $\varphi\in M^1 (\mathbb{R}^d)$ with $\norm{\varphi}_2=1$ and $\Lambda \subseteq \Rtdst$ be a lattice.
Let $\sett{\eta_\gamma: \gamma \in \Gamma}$ be a well-spread family of non-negative symbols defined on
$\Rst^{2d}$ such that $\sum_\gamma \eta_\gamma \approx 1$. Let us restrict each $\eta_\gamma$ to $\Lambda$
and consider the corresponding Gabor multiplier $GM_{\eta_\gamma}$ - cf. \eqref{eq_gab_mul} - having eigenfunctions
$\sett{\phigk: k \geq 1}$ in decreasing order with respect to the corresponding eigenvalues $\lambdagk$.

Then there exists a constant $\alpha>0$ such for every choice of 
numbers $\{N_\gamma: \gamma \in \Gamma\} \subseteq \Nst$ satisfying
\begin{align*}
\alpha \|\eta_\gamma\|_{\ell^1(\Lambda)} \leq N_\gamma
\mbox{ and }
\sup_\gamma N_\gamma < +\infty,
\end{align*}
the family
$\set{\lambdagk\phigk}{\gamma \in \Gamma, 1 \leq k \leq N_\gamma}$
is a frame of $\LtRd$.
\end{theorem}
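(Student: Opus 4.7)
The plan is to reduce Theorem~\ref{Th:GabMul} directly to the abstract Theorem~\ref{th_finite_rank_abs_Lt} using the identifications already set up in the paragraphs preceding the statement, namely $\mathcal{G}=\Lambda$, $E=\ell^2(\Lambda)$, $S_{\ell^2}=C_{\Lambda,\varphi}(L^2(\mathbb{R}^d))$ and $P=C_{\Lambda,\varphi}C_{\Lambda,\varphi}^*$. The preamble already verifies that this choice satisfies (A1), (A2), (A3) with $w\equiv 1$ and kernel $K=\sqrt{A}\,|C_{\Lambda,\varphi}\varphi|\in\ell^1(\Lambda)$, and that the Gabor multiplier $GM_{\eta_\gamma}$ is intertwined by the isometry $C_{\Lambda,\varphi}$ with the abstract phase-space multiplier $M_{\eta_\gamma}:S_{\ell^2}\to S_{\ell^2}$ defined by the symbol $\eta_\gamma|_\Lambda$.

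The first step is to check that the discrete family $\{\eta_\gamma|_\Lambda:\gamma\in\Gamma\}$ is well-spread on $\Lambda$ in the sense of Definition~\ref{eq:etaupbound}. Since $\Gamma\subseteq\Lambda$ is relatively separated in $\Lambda$ (it is already relatively separated in $\mathbb{R}^{2d}$), and the original well-spread envelope $(\Gamma,g)$ with $g\in W(L^\infty,L^1)(\mathbb{R}^{2d})$ yields the pointwise bound $|\eta_\gamma(\lambda)|\leq g(\lambda-\gamma)$ for $\lambda\in\Lambda$, it suffices to observe that the restriction of $g$ to the uniformly discrete set $\Lambda$ belongs to $W(\ell^\infty,\ell^1)(\Lambda)=\ell^1(\Lambda)$. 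This is a standard sampling fact for Wiener amalgam spaces. The hypothesis $\sum_\gamma \eta_\gamma\approx 1$ on $\mathbb{R}^{2d}$ restricts to $\sum_\gamma \eta_\gamma|_\Lambda\approx 1$ on $\Lambda$ by pointwise evaluation.

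With these verifications in place, Theorem~\ref{th_finite_rank_abs_Lt} produces a constant $\alpha>0$ (depending only on the envelope data and the kernel $K$) such that for any choice of $N_\gamma$ with $\alpha\,\|\eta_\gamma|_\Lambda\|_{\ell^1(\Lambda)}\leq N_\gamma$ and $\sup_\gamma N_\gamma<+\infty$, the collection $\{\lambda^\gamma_k\,\Phi^\gamma_k:\gamma\in\Gamma,\ 1\leq k\leq N_\gamma\}$ forms a frame of $S_{\ell^2}$, where $\Phi^\gamma_k$ and $\lambda^\gamma_k$ are the eigenfunctions and eigenvalues of $M_{\eta_\gamma}$.

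Finally, the intertwining relation implies that, if $\phi^\gamma_k\in L^2(\mathbb{R}^d)$ is an eigenfunction of $GM_{\eta_\gamma}$ with eigenvalue $\lambda^\gamma_k$, then $\Phi^\gamma_k:=C_{\Lambda,\varphi}\phi^\gamma_k$ is an eigenfunction of $M_{\eta_\gamma}$ with the same eigenvalue. Because $C_{\Lambda,\varphi}:L^2(\mathbb{R}^d)\to S_{\ell^2}$ is an isometric isomorphism, the frame inequality
\[
A'\|F\|_{\ell^2}^2 \leq \sum_{\gamma,k}|\langle F,\lambda^\gamma_k\Phi^\gamma_k\rangle|^2 \leq B'\|F\|_{\ell^2}^2,\qquad F\in S_{\ell^2},
\]
is equivalent, via $F=C_{\Lambda,\varphi}f$ and $\langle C_{\Lambda,\varphi}f,C_{\Lambda,\varphi}\phi^\gamma_k\rangle=\langle f,\phi^\gamma_k\rangle$, to the desired frame inequality for $\{\lambda^\gamma_k\phi^\gamma_k\}$ on $L^2(\mathbb{R}^d)$. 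The main point requiring care is this translation, ensuring that the eigenstructures in $L^2(\mathbb{R}^d)$ and $S_{\ell^2}$ indeed correspond under $C_{\Lambda,\varphi}$ (which uses the tight-frame hypothesis on $\mathcal{G}_{\varphi,\Lambda}$); everything else is a direct invocation of the abstract machinery.
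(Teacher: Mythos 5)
Your proposal is correct and follows essentially the same route as the paper: the result is obtained as a direct application of Theorem~\ref{th_finite_rank_abs_Lt} after verifying (A1)--(A3) for $\Gc=\Lambda$, $\Esp=\ell^2(\Lambda)$, $P=C_{\Lambda,\varphi}C^*_{\Lambda,\varphi}$, checking that the restricted symbols remain well-spread with $\sum_\gamma\eta_\gamma|_\Lambda\approx 1$, and transferring the frame property from $S_{\ell^2}$ to $L^2(\Rdst)$ through the isometry $C_{\Lambda,\varphi}$, which intertwines $GM_{\eta_\gamma}$ with the phase-space multiplier $M_{\eta_\gamma}$. Your extra remarks on sampling the envelope into $\ell^1(\Lambda)$ and on the eigenfunction correspondence simply make explicit what the paper leaves implicit.
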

\begin{rem}
While Theorem \ref{th_frames_tf} provides frames for $\LtRd$ consisting of functions having a
spectrogram that is optimally adapted to a given weight on $\Rtdst$
- cf. \eqref{eq_optimal}, Theorem \ref{Th:GabMul} provides frame elements where the profile of the coefficients
associated with the discrete expansion in \eqref{eq_gab_exp} is optimized with respect to a weight on $\Lambda$.
\end{rem}

\section*{Appendix A: proof of Theorem \ref{th_almost_orth}}
\label{app_A}
In this appendix we prove Theorem \ref{th_almost_orth}. The proof is essentially contained in \cite{ro12},
but is not explicitly stated in the required generality. We therefore show how to derive Theorem \ref{th_almost_orth}
from some technical lemmas in \cite{ro12}.
\begin{rem}
We quote simplified versions of some statements in \cite{ro12}. The article \cite{ro12} considers a technical
variant of the amalgam space $\winr(\Gc)$, called the weak amalgam space $\wweak$ (see \cite[Section 2.4]{ro12}), which
we do not wish to introduce here. By \cite[Proposition 1]{ro12}, $\Lisp(\Gc) \into \wweak (\Gc) \into \winr(\Gc)$. Some
results from \cite{ro12} that we quote assume that a certain function $g$ belongs to $\winr(\Gc)$ and are proved
in \cite{ro12} under the weaker assumption: $g \in \wweak$.
\end{rem}

We quote the following estimate.
\begin{lemma}\cite[Lemma  3.8]{fegr89}\cite[Lemma 2]{ro12}
\label{lemma_amfact}
Let $\Esp$ be a solid, translation invariant BF space, let $w$ be an admissible weight for it and let
$\Gamma \subseteq \Gc$ be a relatively separated set.
Then for every $f \in \Esp$ and $g \in W_R(C_0,L^1_w)$,
the sequence $(\ip{f}{L_\lambda g})_{\lambda \in \Lambda}$
belongs to $\Espd(\Lambda)$ and
\begin{align*}
\norm{(\ip{f}{L_\lambda g})_\lambda}_{\Espd}
\lesssim \norm{f}_\Esp \norm{g}_{W_R(L^\infty,L^1_w)}. 
\end{align*}
The implicit constants depend on the spreadness $\rel(\Gamma)$ - cf. \eqref{eq_spread}.
\end{lemma}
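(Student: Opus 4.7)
The plan is to read off the norm of the sequence from the definition of $\Espd$ and reduce the claim to an $\Esp$-bound for a single function $F$ on $\Gc$, which in turn is dominated by a convolution of $|f|$ against a function in $L^1_w$. Concretely, set
\[
F(x) \;:=\; \sum_{\lambda \in \Lambda} |\langle f, L_\lambda g\rangle|\, 1_{\lambda V}(x), \qquad x \in \Gc,
\]
so that $\|(\langle f, L_\lambda g\rangle)_\lambda\|_{\Espd} = \|F\|_{\Esp}$. I would show that
\[
F(x) \;\leq\; \rel(\Gamma)\, \bigl(|f| * \widetilde{g_\#}\bigr)(x),
\qquad x \in \Gc,
\]
for a suitable involution $\widetilde{g_\#}$ of the right local maximum function $g_\#$ of $g$, and then invoke the embedding $\Esp * L^1_w \hookrightarrow \Esp$ (valid for the admissible weight $w$) together with solidity of $\Esp$ to conclude
\[
\|F\|_\Esp \;\lesssim\; \rel(\Gamma)\, \|f\|_\Esp\, \|\widetilde{g_\#}\|_{L^1_w} \;\lesssim\; \|f\|_\Esp\,\|g\|_{W_R(L^\infty, L^1_w)}.
\]

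For the pointwise bound I would argue as follows. First, $|\langle f, L_\lambda g\rangle| \leq \int_\Gc |f(y)|\,|g(\lambda^{-1}y)|\,dy$. At a fixed $x$, the indicator $1_{\lambda V}(x)$ forces $\lambda^{-1}x \in V$, and by relative separation there are at most $\rel(\Gamma)$ such $\lambda$ (choosing $V$ symmetric, so that $xV^{-1} = xV$). For each such $\lambda$, factor $\lambda^{-1}y = (\lambda^{-1}x)(x^{-1}y)$ with $\lambda^{-1}x \in V$, so
\[
|g(\lambda^{-1}y)| \;\leq\; \supess_{v \in V} |g(v\cdot x^{-1}y)| \;=\; g_\#(x^{-1}y).
\]
Exchanging sum and integral and using the bound on $\#(\Gamma \cap xV)$ gives $F(x) \leq \rel(\Gamma)\int_\Gc |f(y)|\,g_\#(x^{-1}y)\,dy$, which is the convolution displayed above (with $\widetilde{g_\#}(z)=g_\#(z^{-1})$). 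Property \eqref{weight_w_delta} for $w$ then converts $\|\widetilde{g_\#}\|_{L^1_w}$ into $\|g_\#\|_{L^1_w} = \|g\|_{W_R(L^\infty,L^1_w)}$ up to a constant depending only on $\consEw$.

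The main obstacle is the bookkeeping on a non-commutative group, namely tracking which side of the group action produces left vs.\ right amalgam norms and which inversion/modular factors appear. This is exactly why the statement uses the right local maximum $g_\#$ (and the right amalgam space $W_R$): the family $\{L_\lambda g\}_\lambda$ samples $g$ along $V$-sized right-neighborhoods of $x^{-1}y$ as $\lambda$ ranges over $\Gamma \cap xV$, and the conditions \eqref{weight_w_delta}--\eqref{weight_w_admissible} are tailored precisely so that the resulting right-handed convolution is absorbed by $\Esp$. Once these conventions are fixed, the estimate is a one-line consequence of the convolution relation in \eqref{conv_mod_assumption} applied to $w$; the dependence on $\rel(\Gamma)$ is explicit from the counting step.
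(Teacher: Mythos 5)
Your argument is correct and is essentially the standard proof of this lemma (the paper itself only cites \cite[Lemma 3.8]{fegr89} and \cite[Lemma 2]{ro12} without reproducing a proof): counting the at most $\rel(\Gamma)$ indices $\lambda\in\Gamma\cap xV$, dominating $|g(\lambda^{-1}y)|$ by $g_\#(x^{-1}y)$, recognizing the result as $|f|*\widetilde{g_\#}$ and applying $\Esp*L^1_w\into\Esp$ together with solidity, while \eqref{weight_w_delta} turns $\norm{\widetilde{g_\#}}_{L^1_w}$ into $\norm{g_\#}_{L^1_w}$. The only fine point worth noting is that replacing the essential supremum in $g_\#$ by a pointwise bound uses the continuity of $g$ (guaranteed by $g\in W_R(C_0,L^1_w)$), and that a priori the pairings $\ip{f}{L_\lambda g}$ are only shown finite a posteriori, since the dominating convolution lies in $\Esp\subseteq L^1_{\mathrm{loc}}$; both are handled exactly as in the cited sources.
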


Suppose that Assumptions (A1) and (A2) from Section \ref{sec_model} hold.

For a solid, translation invariant BF space $\Esp$ we 
consider an $\Ltsp$-valued version of $\Espd(\Gamma)$,
\begin{align*}
\EspdLt=\EspdLt(\Gamma) := \set{(f_\gamma)_{\gamma \in \Gamma} \in (\Ltsp(\Gc))^\Gamma}
{ (\norm{f_\gamma}_\Ltsp)_{\gamma \in \Gamma} \in \Espd(\Gamma)},
\end{align*}
and endow it with the norm
$\norm{(f_\gamma)_{\gamma \in \Gamma}}_{\EspdLt} :=
\norm{(\norm{f_\gamma}_{\Ltsp})_{\gamma \in \Gamma}}_{\Espd}$.

Let $\set{\molg}{\gamma \in \Gamma}$ be a well-spread family of operators - cf. Section \ref{sec_abs_mol}.
Let $U \subseteq \Gc$ be a relatively compact neighborhood
of the identity. Consider the operators $\coeft$ and $\recv$ formally defined by
\begin{align}
\label{eq_deff_A}
\coeft(f) &:= (\molg(f))_{\gamma \in \Gamma},
\qquad f \in \SEsp,
\\
\label{eq_deff_B}
\recv((f_\gamma)_{\gamma \in \Gamma}) &:= \sum_{\gamma\in\Gamma} P(f_\gamma) 1_{\gamma U},
\qquad f_\gamma \in \Ltsp(\Gc),
\end{align}
where $1_{\gamma U}$ denotes the characteristic function of the set $\gamma U$. These operators satisfy the following
mapping properties.
\begin{prop}
\label{prop_vector_synt}
Assume (A1) and (A2) and let $\set{\molg}{\gamma \in \Gamma}$ be a well-spread family of operators.
Then the operators $\coeft$ and $\recv$ in \eqref{eq_deff_A} and
\eqref{eq_deff_B} satisfy the following.
\begin{itemize}
 \item[(a)]
The analysis operator $\coeft$ maps $\SEsp$ boundedly into
$\EspdLt(\Gamma)$.
\item[(b)]
For every relatively compact neighborhood of the identity $U$,
and every sequence $F \equiv (f_\gamma)_\gamma \in \EspdLt$,
the series defining $\recv(F)$ converge absolutely in $\Ltsp(\Gc)$ at
every point. Moreover, the operator $\recv$ maps 
$\EspdLt(\Gamma)$ boundedly into $\Esp$
(with a bound that depends on U). 
\end{itemize}
\end{prop}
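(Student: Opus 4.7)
My plan is to treat the two parts of the proposition separately, using Lemma~\ref{lemma_amfact} as the central tool for obtaining the sequence-space bounds in $\Espd(\Gamma)$.

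For part~(a), I would first rewrite the well-spread bound from Definition~\ref{Def:wellspread} as a convolution estimate: $|\molg f(x)| \leq ((g_\gamma \cdot |f|) * \env)(x)$, where $g_\gamma := g(\gamma^{-1}\cdot)$. Since $\env \in W(L^\infty, L^1_w) \subseteq L^1(\Gc)$, Young's inequality yields the per-$\gamma$ estimate $\norm{\molg f}_2 \leq \norm{\env}_1 \norm{g_\gamma |f|}_2$. The more delicate remaining task is to show $\norm{(\norm{g_\gamma |f|}_2)_\gamma}_{\Espd} \lesssim \norm{f}_\Esp$. For this, I would use that $f \in \SEsp \hookrightarrow W(L^\infty, \Esp)$ by Proposition~\ref{prop_P_into_am}(b), discretizing $|f|$ via a bounded uniform partition of unity of $\Gc$ and producing a majorant in $\Esp$ from the local $L^\infty$-norms of $f$. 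One then bounds $\norm{g_\gamma |f|}_2^2$ pointwise in $\gamma$ by an inner product of the form $\langle F, L_\gamma G\rangle$, with $F$ built from the majorant and $G$ built from $g$ (remaining in $W_R(L^\infty, L^1_w)$ by virtue of the convolution algebra property in Proposition~\ref{prop_conv_algebra} combined with $g \in L^\infty$). Lemma~\ref{lemma_amfact} then gives control of $(\norm{g_\gamma |f|}_2^2)_\gamma$ in a suitable (squared) sequence space associated with $\Esp$, and extracting a square root by solidity of $\Esp$ and the equivalence of $\Espd$-norms under different choices of the neighborhood completes~(a).

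For part~(b), the pointwise absolute convergence of $\sum_\gamma P(f_\gamma) 1_{\gamma U}$ is essentially immediate: by (A1) each $P(f_\gamma)$ is continuous, and for any fixed $x \in \Gc$ the set of indices with $x \in \gamma U$ has cardinality at most $\rel_U(\Gamma) < \infty$, so the series reduces at every point to a finite sum. For the norm bound into $\Esp$, I would apply the domination from (A2) to obtain
\begin{align*}
\abs{\recv(F)(x)} \leq \sum_{\gamma \in \Gamma} 1_{\gamma U}(x) \int_\Gc \abs{f_\gamma(y)} \kernelenv(y^{-1}x)\, dy,
\end{align*}
then bound the inner integral in local-$L^2$ fashion using $\norm{f_\gamma}_2$ together with the amalgam properties of $\kernelenv$, and finally recognize the result as pointwise dominated by the convolution $(\tilde h * \kernelenv)(x)$ for a slightly enlarged kernel, where $\tilde h := \sum_\gamma \norm{f_\gamma}_2 \cdot 1_{\gamma U}$. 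By the very definition of the $\Espd$-norm one has $\norm{\tilde h}_\Esp \lesssim \norm{F}_{\EspdLt}$, and then $\kernelenv \in W(L^\infty,L^1_w) \subseteq L^1_w$ combined with $L^1_w * \Esp \subseteq \Esp$ (admissibility of $w$ for $\Esp$) yields $\norm{\recv(F)}_\Esp \lesssim \norm{F}_{\EspdLt}$.

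The main obstacle will be in part~(a): the squared structure of $\norm{\cdot}_2$ does not immediately fit the linear framework of Lemma~\ref{lemma_amfact}, which controls sequences of the form $(\langle F, L_\gamma G\rangle)_\gamma$ rather than their square roots. Carefully deciding how to split factors of $g$ between $L^\infty$ and $L^1_w$ — which is what makes explicit use of the joint membership $g \in W_R(L^\infty, L^1_w)$ — is the most delicate point of the argument, whereas part~(b) is a rather direct consequence of the pointwise domination built into Assumption~(A2).
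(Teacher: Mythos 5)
There is a genuine gap in part (a), exactly at the point you yourself flag as delicate. After applying Young's inequality in the direction $\norm{h*k}_2\le\norm{h}_2\norm{k}_1$ you are left with the quadratic quantities $a_\gamma:=\norm{(L_\gamma g)\,\abs{f}}_2$, and your remedy is to bound $a_\gamma^2$ by a pairing $\ip{F}{L_\gamma G}$, apply Lemma \ref{lemma_amfact} ``in a squared sequence space associated with $\Esp$'', and extract a square root by solidity. This does not work as stated: the natural square of $\Esp$ (the space of $h$ with $\abs{h}^{1/2}\in\Esp$) is in general only a quasi-normed solid space --- for $\Esp=L^1_v$ it is of $L^{1/2}$ type --- so it is not a solid translation-invariant BF space with admissible weight $w$, and Lemma \ref{lemma_amfact} is simply not available there. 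If instead you were to control $(a_\gamma^2)_\gamma$ in $\Espd$ itself, solidity does not let you pass to $(a_\gamma)_\gamma\in\Espd$ (for $\Espd=\ell^2$, knowing $(a_\gamma^2)_\gamma\in\ell^2$ only gives $(a_\gamma)_\gamma\in\ell^4$). A secondary issue is that your very first step uses Young in the form $\norm{h*k}_2\le\norm{h}_2\norm{k}_1$, which on a non-unimodular group (such as the affine group of Section \ref{sec_wavelets}) acquires modular-function factors; this is repairable because $\env\in L^1_w$ and $w$ dominates the relevant powers of $\Delta$, but it needs to be said.

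The gap disappears if you distribute the two factors in the convolution the other way, which is what the paper does: from \eqref{eq_deff_loc_mol}, $\abs{\molg f}\le(\abs{f}\,L_\gamma g)*\env$, and Young in the safe direction $L^1*L^2\into L^2$ gives $\norm{\molg f}_2\le\norm{\env}_2\,\ip{\abs{f}}{L_\gamma g}\lesssim\norm{\env}_{\win}\,\ip{\abs{f}}{L_\gamma g}$. The $\gamma$-dependent factor is now a linear pairing, so Lemma \ref{lemma_amfact} applies verbatim and yields $\norm{\coeft(f)}_{\EspdLt}\lesssim\norm{f}_\Esp\norm{g}_{\winr}$; no partition of unity, no discretization of $f$, and no squared space are needed. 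Your part (b) is essentially correct and can even be streamlined: by Cauchy--Schwarz, $\int_\Gc\abs{f_\gamma(y)}\kernelenv(y^{-1}x)\,dy\le\norm{f_\gamma}_2\,\norm{\iv{\kernelenv}}_2$ uniformly in $x$ (finiteness of $\norm{\iv{\kernelenv}}_2$ is the same fact used in the proof of Proposition \ref{prop_mult_hilbert}), so $\abs{\recv(F)(x)}\lesssim\sum_\gamma\norm{f_\gamma}_2\,1_{\gamma U}(x)$, whose $\Esp$-norm is an equivalent $\EspdLt$-norm of $F$ with constants depending on $U$; note, however, that the paper does not reprove (b) but quotes it from \cite[Proposition 4 (b)]{ro12}, so only part (a) is argued there, along the lines just described.
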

\begin{proof}
Part (b) is proved in \cite[Proposition 4 (b)]{ro12} under weaker hypothesis.
Part (a) is a slight variant of \cite[Proposition 4 (a)]{ro12}; for completeness we give a full argument.
Let $(\Gamma, \env, g)$ be an envelope for $\set{\molg}{\gamma \in \Gamma}$.

Let $f \in \SEsp$. Since $\eta_\gamma$ is bounded, $f \eta_\gamma \in \Esp$.
By the definition of well-spread family - cf. \eqref{eq_deff_loc_mol}
\begin{align*}
\abs{\molg f(x)} &\leq
\int_\Gc \abs{f(y)} g(\gamma^{-1}y) \env(y^{-1}x) dy
= \left( \abs{f}L_\gamma g \right) * \env (x).
\end{align*}
By Young's inequality $L^1* L^2 \hookrightarrow L^2$ we have
\begin{align*}
\norm{\molg f}_2 \leq \norm{\env}_2
\int_\Gc \abs{f(y)} g(\gamma^{-1}y) dy
\lesssim
\norm{\env}_{W(L^\infty,L^1_w)}
\int_\Gc \abs{f(y)} g(\gamma^{-1}y) dy.
\end{align*}
Now Lemma \ref{lemma_amfact} yields
$ \norm{\coeft(f)}_{\EspdLt}\lesssim \norm{f}_{\Esp} \norm{g}_\winr$, as desired.
\end{proof}
\begin{rem}
Note that in the last proof the use of the $L^2$ norm is somewhat arbitrary; a number of other function norms could 
have been used instead (cf. \cite[Proposition 4]{ro12}).
\end{rem}
Now we prove the key approximation result (cf. \cite[Theorem 1]{ro12}).
\begin{theorem}
\label{th_approx}
Assume (A1) and (A2) and let $\set{\molg}{\gamma \in \Gamma}$ be a well-spread family of operators.
Given $\varepsilon >0$, there exists $U_0$, a relatively compact neighborhood
of e such that for all $U \supseteq U_0$
\begin{align}
\label{eq_th_approx}
 \norm{\sum_{\gamma\in\Gamma} T_\gamma f - \recv \coeft(f)}_\Esp \leq \varepsilon \norm{f}_\Esp,
\quad f \in \SEsp.
\end{align}
\end{theorem}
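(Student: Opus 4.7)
The plan is to reduce the statement to a tail-type convolution estimate that can be made arbitrarily small by growing $U$. My first step is to note that, since $\sett{\molg: \gamma \in \Gamma}$ is well-spread with each $\molg:\SEsp\to\SEsp$, for $f\in\SEsp$ we have $\molg f\in\SEsp$, hence by (A2) $P(\molg f)=\molg f$. Consequently
\begin{align*}
\sum_{\gamma\in\Gamma}\molg f-\recv\coeft(f)
= \sum_{\gamma\in\Gamma}\molg f\cdot(1-1_{\gamma U})
= \sum_{\gamma\in\Gamma}\molg f\cdot 1_{\Gc\setminus\gamma U},
\end{align*}
so the task becomes bounding the $\Esp$-norm of this off-diagonal tail.

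Next, I would insert the envelope bound \eqref{eq_deff_loc_mol} and exchange the order of summation and integration to get
\begin{align*}
\Big|\sum_{\gamma\in\Gamma}\molg f(x)\cdot 1_{\Gc\setminus\gamma U}(x)\Big|
\leq \int_\Gc|f(y)|\,\env(y^{-1}x)\sum_{\gamma\in\Gamma}g(\gamma^{-1}y)\,1_{\Gc\setminus U}(\gamma^{-1}x)\,dy.
\end{align*}
The key geometric observation is that $\gamma^{-1}x=(\gamma^{-1}y)(y^{-1}x)$, so if $\gamma^{-1}y$ lies in a fixed compact set $V$ (where $g$ is concentrated) and $\gamma^{-1}x\notin U$, then $y^{-1}x\notin V^{-1}U$. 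Splitting $g=g\cdot 1_V+g\cdot 1_{V^c}$, the $g\cdot 1_V$-part of the bracket vanishes whenever $y^{-1}x\in U'$ for any $U'$ with $V\cdot U'\subseteq U$. The relative separation of $\Gamma$ together with $g\in\winr$ ensures $\sum_\gamma g(\gamma^{-1}y)\lesssim\|g\|_\winr$ uniformly in $y$.

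Putting things together, the pointwise estimate is dominated by a convolution $(|f|*\Phi_U)(x)$ with $\Phi_U\lesssim\env\cdot 1_{\Gc\setminus U'}+(\text{small } g\text{-tail})$. Applying $L^1_w*\Esp\into\Esp$ (from Definition \ref{def_adm_for}, with constants controlled by $\consEw$) yields
\begin{align*}
\Big\|\sum_{\gamma\in\Gamma}\molg f\cdot 1_{\Gc\setminus\gamma U}\Big\|_\Esp
\lesssim \|\Phi_U\|_{L^1_w}\cdot\|f\|_\Esp,
\end{align*}
and since $\env\in\win\subseteq L^1_w$ has integrable tails, $\|\Phi_U\|_{L^1_w}\to 0$ as $U$ grows, provided we first choose $V$ large to absorb the $g$-tail. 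Picking $U_0$ large enough then gives the desired $\varepsilon$. The main obstacle is the simultaneous bookkeeping of the two tails: we need $V$ large enough that the $g\cdot 1_{V^c}$ contribution is $\lesssim\varepsilon\|f\|_\Esp$, and then $U\supseteq V\cdot U'$ with $U'$ large enough for the $\env$-tail to also contribute $\lesssim\varepsilon\|f\|_\Esp$. Converting these amalgam-space norms into pointwise estimates via the right local maximum functions, while keeping the constants uniform over $\gamma$, is where the relative separation of $\Gamma$ and the admissibility of $w$ are essential.
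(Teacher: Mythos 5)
Your argument is correct, and it shares with the paper the same opening reduction: since $\molg f \in \SEsp$, $P(\molg f)=\molg f$, so the error is $\sum_\gamma \molg f\, 1_{\gamma(\Gc\setminus U)}$, which is then dominated pointwise via the envelope \eqref{eq_deff_loc_mol}. Where you diverge is in how this pointwise bound is converted into the operator-norm estimate: the paper stops at exactly that estimate and delegates the remainder to \cite[Theorem 1]{ro12}, whose argument works with the local maximum function of $f$ (hence the paper's preliminary reduction to dominating by $\varepsilon\norm{f}_{W(L^\infty,\Esp)}$ via $\SEsp\into W(L^\infty,\Esp)$ from Proposition \ref{prop_P_into_am}). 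You instead give a self-contained conclusion: split $g=g1_V+g1_{\Gc\setminus V}$, use the factorization $\gamma^{-1}x=(\gamma^{-1}y)(y^{-1}x)$ so that $VU'\subseteq U$ forces $y^{-1}x\notin U'$ on the support of the first piece, bound $\sum_\gamma g(\gamma^{-1}y)$ uniformly by $\rel(\Gamma)\norm{g}_{\winr}$, and end with $\bignorm{|f|*\Phi_U}_\Esp\lesssim \norm{f}_\Esp\norm{\Phi_U}_{L^1_w}$ where $\Phi_U=C\,\env 1_{\Gc\setminus U'}+\delta\,\env$ has small $L^1_w$-norm (using $\env\in\win\into L^1_w$, dominated convergence along an exhaustion by relatively compact sets, and $\sigma$-compactness). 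This buys a proof that works directly with $\norm{f}_\Esp$, avoids the embedding into $W(L^\infty,\Esp)$ and any appeal to the external reference, and still yields the monotonicity in $U$ and the uniformity over spaces $\Esp$ sharing $w$ and $\consEw$; the paper's route buys brevity and consistency with the constants and the weak-amalgam refinements tracked in \cite{ro12}. Two small corrections: the convolution you actually form is $(|f|*\Phi_U)(x)=\int_\Gc |f(y)|\Phi_U(y^{-1}x)\,dy$, so on a possibly non-abelian $\Gc$ the relation you need is $\Esp*L^1_w\subseteq\Esp$, not $L^1_w*\Esp\subseteq\Esp$ as you cite (both are guaranteed once $w$ is admissible, so this is only a mislabel); and the uniform bound $\sum_\gamma g(\gamma^{-1}y)\lesssim\rel(\Gamma)\norm{g}_{\winr}$, together with the vanishing of $\norm{g1_{\Gc\setminus V}}_{\winr}$ as $V$ exhausts $\Gc$, should be stated as the standard amalgam-space lemma it is (in the spirit of Lemma \ref{lemma_amfact}), keeping track of the weight via $w\gtrsim 1$ and \eqref{weight_w_delta}, so that the choice of $U_0=VU'$ (relatively compact as a product of relatively compact sets) is legitimate for every $U\supseteq U_0$.
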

\begin{rem}
The neighborhood $U_0$ can be chosen uniformly for any class of spaces
$\Esp$ having the same weight $w$ and the same constant $\consEw$ (cf.
\eqref{weight_w_admissible}).

Concerning the parameters in Assumptions (A1) and (A2)
and \eqref{eq_deff_loc_mol}, the choice of $U_0$ only depends on
$\norm{\kernelenv}_{W(L^\infty,L^1_w)}$,
$\norm{\kernelenv}_{W_R(L^\infty,L^1_w)}$,
$\norm{\env}_{W(L^\infty,L^1_w)}$,
$\norm{\env}_{W_R(L^\infty,L^1_w)}$,
$\norm{g}_{\winr}$ and
$\rel(\Gamma)$ (cf. \eqref{eq_spread}).
\end{rem}
\begin{proof}[Proof of Theorem \ref{th_approx}]
Let $f \in \SEsp$ and let $U$ be a relatively compact neighborhood of e. Because of the inclusion
$\SEsp \into W(L^\infty,\Esp)$ in Proposition \ref{prop_P_into_am}, it suffices to dominate the left-hand side
of \eqref{eq_th_approx} by $\varepsilon \norm{f}_{W(L^\infty,\Esp)}$.

Note that since $\molg f \in \SEsp$,
$\recv\coeft f(x) = \sum_{\gamma\in\Gamma} \molg f(x) 1_{\gamma U}(x)$. Hence,
using \eqref{eq_deff_loc_mol} let us estimate
\begin{align*}
&\abs{\sum_{\gamma\in\Gamma} \molg f(x) - \recv\coeft f(x)} =
\abs{\sum_{\gamma\in\Gamma} 1_{\gamma (\Gc \setminus U)}(x) \molg(f)(x)}
\\
&\qquad\leq \sum_{\gamma\in\Gamma} \int_{\Gc} \abs{f(y)} g(\gamma^{-1}y) \env(y^{-1}x)
1_{\gamma(\Gc \setminus U)}(x) dy.
\end{align*}
The rest of  the proof is carried out exactly as in \cite[Theorem 1]{ro12}. Indeed, the proof there only depends
on the  estimate just derived.\footnote{The function $\env$ is called $H$ in the proof \cite[Theorem 1]{ro12}.}
(The definition of well-spread family of operators was tailored so that the proof in \cite[Theorem 1]{ro12}
would still work.)
\end{proof}
Finally we can prove Theorem \ref{th_almost_orth}.
\begin{proof}[Proof of Theorem \ref{th_almost_orth}]
Let $\sett{\molg: \gamma \in \Gamma}$ be a well-spread family of operators and suppose that the operator
$\sum_\gamma \molg: \SEsp \to \SEsp$ is invertible. We have to show that for $f \in \SEsp$,
$\norm{f}_\Esp \approx \norm{\coeft(f)}_{\EspdLt(\Gamma)}$. The estimate $\norm{\coeft(f)}_{\EspdLt(\Gamma)} \lesssim
\norm{f}_\Esp$
is proved in Proposition~\ref{prop_vector_synt} (a). To establish the second inequality,  consider the operator
$P\recv\coeft: \SEsp \to \SEsp$. Then for $f \in \SEsp$
\begin{align*}
\norm{\sum_{\gamma\in\Gamma} \molg f - P \recv\coeft f}_\Esp =
\norm{P \sum_{\gamma\in\Gamma} \molg f -P\recv\coeft f }_\Esp
\lesssim \norm{\sum_{\gamma\in\Gamma} \molg f -\recv\coeft f}_\Esp. 
\end{align*}
This estimate, together with Theorem~\ref{th_approx} implies that
$\norm{\sum_\gamma \molg - P \recv\coeft}_{\SEsp \to \SEsp} \rightarrow 0$
as $U$ grows to $\Gc$.  Hence, there exists $U$ such that $P\recv\coeft$ is invertible on $\SEsp$.
Consequently, for $f \in \SEsp$, $\norm{f}_\Esp \approx \norm{P\recv\coeft f}_\Esp \lesssim
\norm{\coeft(f)}_{\EspdLt(\Gamma)}$.
Here we have used the boundedness of $\recv$ - contained in Proposition~\ref{prop_vector_synt} (b) - and the boundedness
of $P:\Esp \to W(L^\infty,\Esp) \into \Esp$ - contained in Proposition~\ref{prop_P_into_am}.
\end{proof}

\section*{Appendix B: Pseudodifferential operators and proof of Proposition \ref{prop_wiener}}
\label{app_B}
The Weyl transform of a distribution $\sigma \in \mathcal{S}'(\Rdst \times \Rdst)$ is 
an operator $\sigma^w$ that is formally defined on functions $f:\Rdst \to \bC$ as
\begin{align}
\label{eq_weyl}
\sigma^w (f)(x) := \int_{\Rdst \times \Rdst} \sigma\left(\frac{x+y}{2},\xi\right) e^{2\pi i(x-y)\xi} f(y) dy d\xi,
\qquad x \in \Rdst.
\end{align}
The fundamental results in the theory of pseudodifferential operators provide conditions on $\sigma$
for the operator $\sigma^w$ to be well-defined and bounded on various function spaces.
We now quote some results about pseudodifferential operators acting on modulation spaces - cf. Section
\ref{sec_mod_sp}.

In \cite{gr06-3,gr06} it was shown that modulation spaces on $\Rtdst$ serve as symbol classes to study
pseudodifferential operators acting on modulation spaces on $\Rdst$, recovering and extending classical results from
Sj\"ostrand \cite{sj94,sj95}. We quote the following simplified version of \cite[Theorems 4.1 and 4.6, and Corollaries
3.3 and 4.7]{gr06}. (The GRS condition for admissible weights in Section \ref{sec_mod_sp} is important here.)

\begin{theorem}
\label{th_weyl}
Let $w$ be an admissible TF weight - cf. Definition \ref{def_tf_weights} - and let $\varphi \in M^1_w(\Rdst)$ be
non-zero. Let us denote $\widetilde{w}(z_1,z_2) = w(-z_2,z_1)$. Then the following statements hold true.
\begin{itemize}
\item[(i)] If $\sigma \in \Sjclass$, then $\sigma^w$ is bounded on $M^p_v(\Rdst)$, for all $w$-moderated weights $v$
and all $p \in [1,+\infty]$.
\item[(ii)] If $\sigma \in \Sjclass$ and $\sigma^w$ is invertible as an operator on $\LtRd$, then
$\sigma^w$ is invertible as an operators on $M^p_v(\Rdst)$, for all $w$-moderated weights $v$
and all $p \in [1,+\infty]$.
\item[(iii)] Let $T:\mathcal{S}(\Rdst)\to\mathcal{S}'(\Rdst)$ be a linear and continuous operator.
For $(x,\xi) \in \Rdst\times\Rdst$ let us denote $\varphi_{(x,\xi)}(t):=e^{2\pi i \xi t}\varphi(t-x)$.
If there exists a function $H \in L^1_w(\Rtdst)$ such that
\begin{align}
\label{eq_weyl_env}
\abs{\ip{T(\varphi_{(x,\xi)})}{\varphi_{(x',\xi')}}} \leq H(x'-x,\xi'-\xi), \qquad
(x,\xi),(x',\xi') \in \Rdst\times\Rdst,
\end{align}
then the exists $\sigma \in \Sjclass$ such that $T=\sigma^w$ on $\mathcal{S}(\Rdst)$.
\end{itemize}
\end{theorem}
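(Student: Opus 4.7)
The theorem aggregates results from \cite{sj94,sj95,gr06-3,gr06}; my strategy would be to reduce all three parts to a single Gabor matrix estimate and then import the Gr\"ochenig--Leinert spectral invariance theorem. The starting point is the identity
\begin{align*}
\ip{\sigma^w \varphi_{z}}{\varphi_{z'}} = \ip{\sigma}{W(\varphi_{z'},\varphi_z)},\qquad z,z' \in \Rtdst,
\end{align*}
where $W$ is the cross-Wigner distribution. Because $W(\varphi_{z'},\varphi_z)$ is, up to a unimodular factor, a time-frequency shift of $W(\varphi,\varphi) \in \mathcal{S}(\Rtdst)$ by a linear image of $(z,z')$, the hypothesis $\sigma \in M^{\infty,1}_{\widetilde w}(\Rtdst)$ translates, via the STFT definition of this modulation space on $\Rtdst$, into the key envelope
\begin{align*}
\abs{\ip{\sigma^w \varphi_z}{\varphi_{z'}}} \leq H_0(z'-z),\qquad z,z' \in \Rtdst,
\end{align*}
for some $H_0 \in L^1_w(\Rtdst)$. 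This single inequality drives everything that follows.

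For part (i), the continuous reproducing formula $V_\varphi(\sigma^w f)(z') = \int_{\Rtdst} \ip{\sigma^w \varphi_z}{\varphi_{z'}} V_\varphi f(z)\,dz$ yields the pointwise domination $\abs{V_\varphi(\sigma^w f)} \leq \abs{V_\varphi f} * H_0$. Since $v$ is $w$-moderated, the weighted Young inequality $L^1_w * L^p_v \hookrightarrow L^p_v$ combined with $\norm{f}_{M^p_v} \approx \norm{V_\varphi f}_{L^p_v}$ gives boundedness on $M^p_v$, with constants uniform in $p$ and in $v$ for fixed moderation constant $C_v$. Part (iii) is essentially the converse reconstruction: given the matrix bound \eqref{eq_weyl_env}, I would define $\sigma$ by inverting the above map $\sigma \mapsto (z,z') \mapsto \ip{\sigma^w\varphi_z}{\varphi_{z'}}$ against a Gaussian window, then verify via the change of variables $(z,z')\mapsto(z,z'-z)$ that \eqref{eq_weyl_env} is precisely the STFT profile characterizing membership in $M^{\infty,1}_{\widetilde w}(\Rtdst)$; the identity $T=\sigma^w$ on $\mathcal{S}(\Rdst)$ then follows from the continuous resolution of the identity.

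The genuinely hard step is (ii), spectral invariance on $L^2$. My plan is to discretize: choosing a sufficiently dense lattice $\Lambda \subseteq \Rtdst$ so that $\{\pi(\lambda)\varphi\}_{\lambda \in \Lambda}$ is a Gabor frame, the operator $\sigma^w$ corresponds to an infinite Gabor matrix $M$ with $\abs{M_{\mu,\lambda}} \lesssim H_0(\mu-\lambda)$, hence with off-diagonal decay in $\ell^1_w(\Lambda)$. Invertibility of $\sigma^w$ on $L^2(\Rdst)$ transfers to invertibility of $M$ on $\ell^2(\Lambda)$. The essential analytic input is the Gr\"ochenig--Leinert theorem, valid precisely because $w$ satisfies the GRS condition in Definition~\ref{def_tf_weights}(iii): matrices with $\ell^1_w$ off-diagonal decay form a Banach algebra that is inverse-closed in $\mathcal{B}(\ell^2(\Lambda))$. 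Thus $M^{-1}$ has $\ell^1_w$-decay as well; synthesizing back, $(\sigma^w)^{-1} = \tau^w$ with $\tau \in M^{\infty,1}_{\widetilde w}(\Rtdst)$, and applying (i) to $\tau$ delivers invertibility on every $M^p_v$. The principal obstacle is precisely this Wiener-type lemma in weighted form; granting it, the remainder is a careful translation between the continuous matrix representation and its sampled version on $\Lambda$.
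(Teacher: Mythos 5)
A preliminary remark: the paper does not actually prove Theorem \ref{th_weyl}; it is quoted, without proof, as a simplified version of results of Sj\"ostrand and Gr\"ochenig \cite{sj94,sj95,gr06-3,gr06}, and only Proposition \ref{prop_wiener} is derived from it in Appendix B. So the comparison is really between your sketch and the proofs in the cited sources. Your overall strategy --- almost-diagonalization of Weyl operators with symbols in $\Sjclass$ via the Gabor matrix $\ip{\sigma^w\varphi_z}{\varphi_{z'}}$, weighted Young's inequality for (i), the converse characterization for (iii), and a weighted Wiener-lemma (inverse-closedness) argument for (ii), with the GRS condition entering exactly there --- is indeed the route taken in \cite{gr06}, so as a plan it is faithful to how the quoted theorem is established.

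There is, however, a genuine gap in your treatment of (ii) as stated. For a redundant Gabor frame $\{\pi(\lambda)\varphi\}_{\lambda\in\Lambda}$ the matrix $M=\bigl(\ip{\sigma^w\pi(\lambda)\varphi}{\pi(\mu)\varphi}\bigr)_{\mu,\lambda}$ is $C_\varphi\,\sigma^w\,C_\varphi^*$, where $C_\varphi$ is the analysis operator; its kernel contains the infinite-dimensional orthogonal complement of $\operatorname{ran}C_\varphi$ in $\ell^2(\Lambda)$. Hence invertibility of $\sigma^w$ on $\LtRd$ does \emph{not} transfer to invertibility of $M$ on $\ell^2(\Lambda)$, which is precisely what your appeal to inverse-closedness requires. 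The standard repair, and the one used in \cite{gr06}, is to note that $M$ is invertible only on $\operatorname{ran}C_\varphi$, that the Gramian projection $P=C_\varphi C_\varphi^*$ (tight normalization) itself belongs to the Wiener-type matrix algebra because $\varphi\in M^1_w$ gives the required off-diagonal decay of $\ip{\pi(\lambda)\varphi}{\pi(\mu)\varphi}$, and then to apply inverse-closedness to $M+(I-P)$ before translating back to an operator statement via the converse characterization; this extra step is exactly the ``careful translation'' you declare routine, and without it the argument stalls. Two smaller inaccuracies: $W(\varphi,\varphi)$ is not Schwartz under the standing assumption $\varphi\in M^1_w$ (only membership in the appropriate $M^1$-type space on $\Rtdst$ holds, which is what the characterization of $\Sjclass$ actually needs); and in (iii) one should first invoke the Schwartz kernel theorem to obtain some $\sigma\in\mathcal{S}'(\Rtdst)$ with $T=\sigma^w$, and only then use \eqref{eq_weyl_env} to conclude $\sigma\in\Sjclass$ --- ``inverting against a Gaussian window'' is not by itself a definition of $\sigma$.
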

As an application of Theorem \ref{th_weyl} we now prove Proposition \ref{prop_wiener}.
\begin{proof}[Proof of Proposition \ref{prop_wiener}]
With the notation of Theorem \ref{th_weyl} we use \eqref{eq_wiener_env} we to estimate
\begin{align*}
&\abs{\ip{T(\varphi_{(x,\xi)})}{\varphi_{(x',\xi')}}}
=\abs{V_\varphi T(\varphi_{(x,\xi)})(x',\xi')}
\\
&\qquad \leq
\int_{\Rtdst} \abs{V_\varphi \varphi_{(x,\xi)}(z'')} H((x',\xi')-z'') dz''
\\
&\qquad= \int_{\Rtdst} \abs{V_\varphi \varphi(z''-(x,\xi))} H((x',\xi')-z'') dz''
= (H*\abs{V_\varphi \varphi})((x',\xi')-(x,\xi)).
\end{align*}
Since $\varphi \in \Miw(\Rdst)$ and $H \in L^1_w(\Rtdst)$, we deduce that
$H*\abs{V_\varphi \varphi} \in L^1_w(\Rtdst)$.

Hence, Theorem \ref{th_weyl} implies that there exists $\sigma \in \Sjclass$ such that
$T \equiv \sigma^w$ on $\mathcal{S}(\Rdst)$. Since both operators are bounded on $L^2(\Rdst)$ it follows
that $T \equiv \sigma^w$ on $\LtRd$. By hypothesis $T=\sigma^w: \LtRd \to \LtRd$ is invertible. A new
application of Theorem \ref{th_weyl} implies that $\sigma^w: M^p_v(\Rdst) \to M^p_v(\Rdst)$ is invertible.
It is tempting to conclude that then $T:M^p_v(\Rdst) \to M^p_v(\Rdst)$ is invertible because it ``is''
$\sigma^w$. If $p<+\infty$ that conclusion is indeed correct because both operators coincide on the dense
space $\mathcal{S}$. The case $p=+\infty$ requires some carefulness. We now discuss this in detail.

We note that $M^p_v(\Rdst) \hookrightarrow M^\infty_{1/w}(\Rdst)$ and use the facts that $M^\infty_{1/w}(\Rdst)$ can
be identified with the dual space of the (separable) Banach space $M^1_w(\Rdst)$ and that
$\mathcal{S}$ is dense in $M^\infty_{1/w}(\Rdst)$ with respect to the weak* topology
(see \cite[Chapter 11]{gr01}). To conclude that $T=\sigma^w$ on $M^p_v(\Rdst)$ we show that both operators are
continuous with respect to the weak* topology of $M^\infty_{1/w}$.

Let $f \in M^p_v(\Rdst)$ and let us show that $T(f)=\sigma^w(f)$. Let $\sett{f_k: k \in \Nst} \subseteq
\mathcal{S}(\Rdst)$ be a sequence such that $f_k \longrightarrow f$ in the weak*-topology of 
$M^\infty_{1/w}(\Rdst)$. The operator $\sigma^w:M^\infty_{1/w}(\Rdst) \to M^\infty_{1/w}(\Rdst)$ is weak* continuous
because it is the adjoint of the operator $\overline{\sigma}^{w}: M^1_w(\Rdst) \to M^1_w(\Rdst)$. Hence
$T(f_k) = \sigma^w(f_k) \longrightarrow \sigma^w(f)$ in the weak*-topology of $M^\infty_{1/w}(\Rdst)$.
Let us note that this implies that
\begin{align}
\label{eq_xx}
V_\varphi T(f_k) (z) \longrightarrow V_\varphi \sigma^w(f)(z),
\mbox{ as } k \longrightarrow +\infty,
\mbox{ for all }z \in \Rtdst.
\end{align}
Indeed, if $z=(x,\xi) \in \Rdst \times \Rdst$ the function
$\varphi_{(x,\xi)} :=e^{2\pi i \xi \cdot}\varphi(\cdot-x)$ belongs to $M^1_w(\Rdst)$ and consequently
$V_\varphi T(f_k)(z) = \ip{T(f_k)}{\varphi_{(x,\xi)}} \longrightarrow 
\ip{\sigma^w(f)}{\varphi_{(x,\xi)}}= V_\varphi \sigma^w(f)(z)$.
Similarly, since $f_k \longrightarrow f$ in the weak* topology of $M^\infty_{1/w}$,
we know that $V_\varphi f_k (z) \longrightarrow V_\varphi f(z)$, for all $z \in \Rtdst$.

Using the enveloping condition in \eqref{eq_wiener_env}, we estimate for $z \in \Rtdst$
\begin{align*}
\abs{V_\varphi T(f)(z)- V_\varphi T(f_k)(z)}&
\leq \int_{\Rtdst} \abs{V_\varphi (f-f_k)(z')} H(z-z') dz'
\end{align*}
The integrand in the last expression tends to $0$ pointwise as $k \longrightarrow +\infty$.
In order to apply Lebesgue's dominated
convergence theorem
we show that the integrand is dominated by an integrable function. Since $H \in L^1_w(\Rtdst)$ it suffices to show that
$\sup_k \norm{V_\varphi (f-f_k)}_{L^\infty_{1/w}} < +\infty$. This is true because
$\norm{V_\varphi (f-f_k)}_{L^\infty_{1/w}}=\norm{f-f_k}_{M^\infty_{1/w}}$ and weak*-convergent sequences are bounded.
Hence, Lebesgue's dominated convergence theorem can be applied and we conclude that
$V_\varphi T(f_k) (z) \longrightarrow V_\varphi T(f)(z)$, for all $z \in \Rtdst$. Combining this with
\eqref{eq_xx} we conclude that $V_\varphi T(f) \equiv V_\varphi \sigma^w(f)$. Hence
$T(f)= \sigma^w(f)$, as desired.
\end{proof}

\end{document}